\definecolor{dGREEN}{rgb}{0.0,0.6,0.4}
\numberwithin{equation}{section}
\newtheorem{thm}{Theorem}[section]
\newtheorem{cor}[thm]{Corollary}
\newtheorem{lem}[thm]{Lemma}
\newtheorem{prop}[thm]{Proposition}
\newtheorem{remark}[thm]{Remark}
\begin{document}

\title[On segmentation by total variation type energies]{On segmentation by total variation type energies of Kobayashi-Warren-Carter type with fidelity}

\author[Y.~Giga]{Yoshikazu Giga}
\address[Y.~Giga]{Graduate School of Mathematical Sciences, The University of Tokyo} %, 3-8-1 Komaba, Meguro-ku, Tokyo 153-8914, Japan
\email{labgiga@ms.u-tokyo.ac.jp}

\author[A.~Kubo]{Ayato Kubo}
\address[A.~Kubo]{Department of Mathematics, Faculty of Science, Hokkaido University}
\email{kubo.ayato.j8@elms.hokudai.ac.jp}

\author[H.~Kuroda]{Hirotoshi Kuroda}
\address[H.~Kuroda]{Department of Mathematics, Faculty of Science, Hokkaido University}
\email{kuro@math.sci.hokudai.ac.jp}

\author[J.~Okamoto]{Jun Okamoto}
\address[J.~Okamoto]{Institute for the Advanced Study of Human Biology, Kyoto University}
\email{okamoto.jun.8n@kyoto-u.ac.jp}

\author[K.~Sakakibara]{Koya Sakakibara}
\address[K.~Sakakibara]{Faculty of Mathematics and Physics, Institute of Science and Engineering, Kanazawa University; RIKEN iTHEMS}\email{ksakaki@se.kanazawa-u.ac.jp}

%%%%%%%%%%%%%%

\subjclass[2020]{49AQ20, 74A50, 74N05}
\keywords{total variation, fidelity, coincidence set, jump, Kobayashi-Warren-Carter energy}

%%%%%%%%%%%%%%
\begin{abstract}
We consider a total variation type energy which measures the jump discontinuities different from usual total variation energy.
 Such a type of energy is obtained as a singular limit of the Kobayashi-Warren-Carter energy with minimization with respect to the order parameter.
 We consider the Rudin-Osher-Fatemi type energy by replacing relaxation term by this type of total variation energy.
 We show that all minimizers are piecewise constant if the data function in the fidelity term is continuous in one-dimensional setting.
 Moreover, the number of jumps is bounded by an explicit constant involving a constant related to the fidelity.
 This is quite different from conventional Rudin-Osher-Fatemi energy where a minimizer has no jumps if the data has no jumps.
 Our results give an upper bound of the number of segments in a segmentation problem.
 The existence of a minimizer is guaranteed in multi-dimensional setting when the data is bounded.
\end{abstract}

\maketitle
\thispagestyle{empty}

%%%%%%%%%%%%%%%%%%%%%%%%%%%%%%%%%%%%%%%%%%%%%%%%%
\section{Introduction} \label{SIn} % Section 1

% 原稿 2024/5/7　1/4
We consider a kind of total variation energy which measures jumps different from the conventional total variation energy.
 Let $\Omega$ be a bounded domain in $\mathbb{R}^n$ with $n\geq1$.
 Let $u$ be in $BV(\Omega)$, i.e., its distributional derivative $Du$ is a finite Radon measure in $\Omega$ and let $|Du|$ denote its total variation measure.
 The total variation energy can be written in the form
\[
	TV(u) = \int_{\Omega\backslash J_u} |Du|
	+ \int_{J_u} |u^+ - u^-|\; d\mathcal{H}^{n-1},
\]
where $J_u$ denotes the (approximate) jump set of $u$ and $u^\pm$ is a trace of $u$ from each side of $J_u$; 
$\mathcal{H}^{n-1}$ denotes the $n-1$ dimensional Hausdorff measure.
 For a precise meaning of this formula, see Section \ref{SEx} and \cite{AFP}.
 Let $K(\rho)$ be a non-decreasing, lower semicontinuous function for $\rho\geq0$ with $K(0)=0$.
 We set
\[
	TV_K(u) = 	\int_{\Omega\backslash J_u} |Du|
	+ \int_{J_u} K \left(|u^+ - u^-|\right) d\mathcal{H}^{n-1}.
\]
For a given function $g\in L^2(\Omega)$, we are interested in a minimizer of
\[
	TV_{Kg}(u) = TV_K(u) + \mathcal{F}(u), \quad
	\mathcal{F}(u) = \frac{\lambda}{2} \int_\Omega |u-g|^2\; dx,
\]
where $\lambda>0$ is a constant.
 The term $\mathcal{F}$ is often called a fidelity term.
 If $TV_K=TV$, the functional $TV_g(u)=TV(u)+\mathcal{F}(u) $ is often called Rudin-Osher-Fatemi functional since its equivalent problem is proposed by \cite{ROF} to denoise the original image whose grey-level values equal $g$.
 (The equivalence between this problem and the original problem in \cite{ROF} is proved in \cite{CL97}.)
 For $TV_g$, there always exists a unique minimizer since the problem is strictly convex and lower semicontinuous in $L^2(\Omega)$.
 We are interested in regularity of a minimizer of $TV_{Kg}$ assuming some regularity of $g$.
 This problem is well studied for $TV_g$.
 Let $u_*$ be the minimizer of $TV_g$ for $g\in BV(\Omega)$.
 If $g$ is Lipschitz, then $u_*$ is Lipschitz and this Lipschitz regularity is optimal in the sense that $u_*$ may not be more regular even if $g$ is smooth.

Local and global Lipschitz regularity was first proved by \cite{CCN11} when $n\le7$.
 In the case of global Lipschitz  regularity, the convexity of $\Omega$ is assumed.
 It is also proved in \cite{CCN11} that $u_*$ is uniformly continuous if $g$ is uniformly continuous under the same restriction on $\Omega$ and $n$.
 Both local and global Lipschitz regularity were proved by \cite{P} without convexity of $\Omega$ nor dimension restriction by adjusting Bernstein's type estimate.
 By now it is well known that $J_{u_*}\subset J_g$ and $u_*^+(x) - u_*^-(x)\leq g^+(x)-g^-(x)$ for $\mathcal{H}^{n-1}$-a.e.\ $x\in J_{u_{\ast}}$.
 In particular, if $g$ has no jumps, so does $u_*$.
 This type of result was first obtained by \cite{CCN07} and extended to various settings in many years; see for example \cite{DS19, CJN13, Mer18}.
 In \cite{CL}, the above assertion for $TV_g$ is extended to vectorial case in a multi-dimensional domain even $TV$ is replaced by more general anisotropic one; see also a reviewer paper \cite{GKL}.

% 原稿 2024/5/7　2/4
In this paper, like the Mumford-Shah functional \cite{MS89}, we shall show that a minimizer of $TV_{Kg}$ may have jumps even if $g$ has no jumps for some class of subadditive function $K$ including
\[
	K(\rho) = \frac{\rho}{1+\rho}
\]
as a particular example when $\Omega$ is an interval.
 This type of $TV_K$ appears as a kind of singular limit of the Kobayashi-Warren-Carter energy \cite{GOU}, \cite{GOSU}.
 Actually, we shall prove a stronger result saying that a minimizer is a piecewise constant function with finitely many jumps for $g\in C(\overline{\Omega})$ when $\Omega$ is a bounded interval.
 Here is a precise statement.

For a function  $K\colon[0,\infty)\rightarrow[0,\infty)$   measuring a jump, we assume that
%
% 原稿 2024/4/24　7/7 1-1
\begin{enumerate}
\item[(K1)]  $K$  is a lower semicontinuous, non-decreasing function with $K(0)=0$.
\item[(K2)] For any $M>0$, there exists a positive constant $C_M$ such that 
\[
	K(\rho_1) + K(\rho_2) \geq K(\rho_1+\rho_2) + C_M \rho_1 \rho_2
\]
for all $\rho_1,\rho_2\geq0$ with $\rho_1+\rho_2\leq M$.
 In particular, $K$ is subadditive.
\item[(K3)] $\lim_{\rho\to0} K(\rho)/\rho=1$.
\end{enumerate}
If $K(\rho)=\rho$ so that $TV_K=TV$, $K$ satisfies (K1) and (K3) but does not satisfy (K2).
 If $K(\rho)=\rho/(1+\rho)$, $K$ satisfies (K2) as well as (K1) and (K3).
 Indeed, a direct calculation shows that
\[
	\frac{\rho_1}{1+\rho_1} + \frac{\rho_2}{1+\rho_2} 
	= \frac{\rho+2\rho_1\rho_2}{1+\rho+\rho_1\rho_2} = \frac{\rho}{1+\rho}
	+ \frac{(2+\rho)\rho_1\rho_2}{(1+\rho)(1+\rho+\rho_1\rho_2)}, 
	\quad \rho = \rho_1 + \rho_2.
\]
Thus, (K2) follows.
\begin{thm} \label{TMain}
Assume that $K$ satisfies (K1), (K2) and (K3).
 Assume that $g\in C[a,b]$.
 Let $M>0$ be a number such that
\[
	M \ge \operatorname{osc}_{[a,b]} 
	g := \max_{[a,b]} g - \min_{[a,b]} g.
\]
Let $U\in BV(a,b)$ be a minimizer of $TV_{Kg}$.
 Then $U$ must be a piecewise constant function (with finitely many jumps) satisfying $\inf g\leq U\leq \sup g$ on $[a,b]$.
 Let $m$ be the number of jumps of $U$.
 Then
\[
	m \leq \left[ (b-a)\lambda / A_M \right] +1,
\]
where $A_M=\min\{c_M/M, C_M\}$.
 Here, $c_M$ is a constant such that $K(\rho)\geq c_M\rho$ for $\rho\in[0,M]$ and $[r]$ denotes the integer part of $r\geq0$.
\end{thm}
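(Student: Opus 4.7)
The plan is to follow three steps: an $L^\infty$ truncation, the exclusion of continuous (AC and Cantor) variation, and the explicit counting of jumps. The bound $\inf g\le U\le\sup g$ would follow by composing $U$ with the $1$-Lipschitz truncation onto $[\inf g,\sup g]$: such a truncation decreases jump sizes and continuous variation pointwise, so $TV_K$ does not increase by monotonicity of $K$, while the fidelity $\mathcal{F}$ only decreases. Hence $\operatorname{osc} U\le M$ at any minimizer.

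To rule out AC and Cantor parts, I would first derive, by iterating (K2) with halved arguments and passing to the limit via (K3), the quantitative subadditivity $\rho-K(\rho)\ge (C_M/2)\rho^2$ for $\rho\in[0,M]$. This is the engine for piecewise constancy: on a short subinterval of length $h$ where $U$ has continuous variation $V$, replacing $U$ by a single-jump step function matched to the boundary traces saves at least $C_M V^2/2$ in $TV_K$, while costing at most of order $\lambda h V^2$ in fidelity (using continuity of $g$ together with $\operatorname{osc} U\le M$). For $h$ smaller than a fixed multiple of $C_M/\lambda$ the savings dominate strictly, and a fine partition argument rules out any nonzero continuous part of $DU$ on a minimizer.

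For the count, I would write the piecewise constant minimizer with jumps at $x_1<\dots<x_m$, plateau values $u_0,\dots,u_m$, jump sizes $s_k=|u_k-u_{k-1}|$, and inner plateau lengths $\ell_k=x_{k+1}-x_k$ for $k=1,\dots,m-1$. For each inner plateau I would test two competitors,
\[
(\mathrm{A})\ \tilde U=u_{k-1}\text{ on }(x_k,x_{k+1}),\qquad (\mathrm{B})\ \tilde U=u_{k+1}\text{ on }(x_k,x_{k+1}).
\]
In both cases (K2) with $(\rho_1,\rho_2)=(s_k,s_{k+1})$ gives $\Delta TV_K\le -C_M s_k s_{k+1}$. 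Using the first-order optimality of $u_k$ on $(x_k,x_{k+1})$, which pins $u_k$ essentially to the mean $\bar g_k=\ell_k^{-1}\int_{x_k}^{x_{k+1}}g\,dx$, a direct calculation yields $\Delta\mathcal{F}_A=(\lambda/2)s_k^2\ell_k$ and $\Delta\mathcal{F}_B=(\lambda/2)s_{k+1}^2\ell_k$. Non-negativity of $\Delta E$ forces $\ell_k\ge 2C_M s_{k+1}/(\lambda s_k)$ and $\ell_k\ge 2C_M s_k/(\lambda s_{k+1})$; multiplying gives $\ell_k\ge 2C_M/\lambda\ge A_M/\lambda$. Summing $\sum_{k=1}^{m-1}\ell_k\le b-a$ yields $(m-1)A_M/\lambda\le b-a$, which is the stated estimate.

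The hard part is Step 2: the quadratic savings $(C_M/2)\rho^2$ are only effective on intervals of length at most of order $C_M/\lambda$, so one has to localize carefully and iterate in order to handle both the AC and the Cantor parts of $DU$. The $c_M/M$ branch of $A_M$ should enter in degenerate configurations in which one of $s_k$, $s_{k+1}$ becomes too small for the purely quadratic estimate to be effective, forcing a fallback on the linear lower bound $K(\rho)\ge c_M\rho$ in the competitor argument.
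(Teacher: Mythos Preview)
Your counting argument in Step 3 has a genuine error. Varying the plateau value $u_k$ also perturbs the two adjacent jump sizes $s_k,s_{k+1}$, so the first-order optimality condition is not $u_k=\bar g_k$ but (formally, in the monotone case)
\[
\lambda\int_{x_k}^{x_{k+1}}(u_k-g)\,dx \;=\; K'(s_{k+1})-K'(s_k),
\]
and $K$ is not even assumed differentiable. Consequently the ``direct calculation'' giving $\Delta\mathcal F_A=(\lambda/2)s_k^2\ell_k$ and $\Delta\mathcal F_B=(\lambda/2)s_{k+1}^2\ell_k$ is unjustified, and with it the product inequality $\ell_k\ge 2C_M/\lambda$. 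The paper obtains the fidelity bound by a completely different route: it introduces the coincidence set $C=\{U=g\}$, shows $U$ is piecewise constant with at most one jump between consecutive points of $C$ (Lemmas~\ref{LPic}, \ref{LCoi}), and then proves the delicate estimate $\int_\alpha^\beta\{(U(\alpha)-g)^2-(U-g)^2\}\le\rho^2(\beta-\alpha)$ for $\alpha,\beta\in C$ (Theorem~\ref{TFidInc}), which itself relies on a facet-by-facet inequality (Lemma~\ref{LCGF}) and Proposition~\ref{PCong}. This is what replaces your ``$u_k=\bar g_k$'' shortcut.

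Step 2 has a related gap. Without first knowing that $U$ is monotone on the short subinterval, replacing $U$ by a step matched to boundary traces costs at most $\lambda h\cdot V\cdot M$ in fidelity (via $|U-\text{step}|\le V$ and $|U-g|+|\text{step}-g|\le 2M$), not $\lambda h V^2$; the resulting threshold $h\lesssim C_M V/(\lambda M)$ depends on $V$ and can be arbitrarily small, so the partition argument does not close. In the paper the $c_M/M$ branch of $A_M$ is not a degenerate fallback: it is exactly what is used (Theorem~\ref{TMon}) to force monotonicity of $U$ between nearby points of $C$, after which Lemma~\ref{LCont} and Lemma~\ref{LLEN} handle the continuous and jump parts separately. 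Your scheme also does not address plateaus where adjacent jumps have opposite signs, where (K2) gives no $C_M s_ks_{k+1}$ gain; the paper's monotonicity step is precisely what rules this out on intervals of length $\le A_M/\lambda$.
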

We note that we do not assume that $g\in BV(a,b)$.
% 原稿 2024/5/7 2/4
 If $g$ is non-decreasing or non-increasing, we have a sharper estimate for $m$.
\begin{thm} \label{TMainMon}
Assume that $K$ satisfies (K1), (K2) and (K3).
 Assume that $g\in C[a,b]$ is non-decreasing (resp.\ non-increasing).
 Let $M$ be taken such that $	M\geq\operatorname{osc}_{[a,b]}g=\left|g(b)-g(a)\right|$.
 Let $U\in BV(a,b)$ be a minimizer of $TV_{Kg}$.
 Then $U$ must 
% 原稿 2024/5/7 3/4
be a non-decreasing (non-increasing) piecewise constant function satisfying $\inf g\leq U\leq\sup g$ on $[a,b]$.
 The number $m$ of jumps is estimated as
\[
	m \leq \left[(b-a)\lambda/C_M \right] + 1.
\]
\end{thm}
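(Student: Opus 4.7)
The plan is to build on Theorem \ref{TMain} and upgrade its conclusions in two ways when $g$ is monotone. By Theorem \ref{TMain}, the minimizer $U$ is already piecewise constant with values $v_0, v_1, \ldots, v_m$ on successive intervals $I_0 = (a, x_1), \ldots, I_m = (x_m, b)$ and satisfies $\inf g \leq v_i \leq \sup g$. What is left is (i) to show that $U$ inherits the monotonicity of $g$, and (ii) to sharpen the bound on $m$ by dropping the $c_M/M$ term in $A_M$. Both parts will rest on a first-order stationarity condition at interior jumps that I would derive at the outset: perturbing the position $x_i \to x_i + \varepsilon$ leaves $TV_K(U)$ untouched, and continuity of $g$ together with minimality of $U$ yields
\[
    g(x_i) = \frac{v_{i-1} + v_i}{2} \quad \text{for every } i = 1, \ldots, m.
\]

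For monotonicity in the non-decreasing case (the other is symmetric), I would argue by contradiction. Suppose $v_{i-1} > v_i$ at some jump position $x_i$ and test the competitor $\widetilde U$ that equals $U$ off $I_i$ and equals $v_{i-1}$ on $I_i$. The jump at $x_i$ disappears, while the jump at $x_{i+1}$ changes from $|v_{i+1} - v_i|$ to $|v_{i+1} - v_{i-1}|$ (and if $i = m$ there is simply no right-hand neighbouring jump). A short case split on the order of $v_{i+1}$ relative to $v_i$ and $v_{i-1}$ --- using strict monotonicity of $K$ when $v_{i+1} > v_i$, and (K2) applied to $(v_{i-1} - v_i)$ and $(v_i - v_{i+1})$ when $v_{i+1} < v_i$ --- gives $TV_K(\widetilde U) < TV_K(U)$. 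For the fidelity, the stationarity condition together with the monotonicity of $g$ gives $g \geq (v_{i-1}+v_i)/2$ throughout $I_i$, so
\[
    \mathcal{F}(\widetilde U) - \mathcal{F}(U)
    = \lambda (v_{i-1} - v_i) \int_{I_i} \Bigl[ \tfrac{v_{i-1}+v_i}{2} - g \Bigr] dx \leq 0.
\]
Together these contradict the minimality of $U$, so no wrong-way jump can exist and $U$ must be non-decreasing.

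With monotonicity in hand, set $\rho_i = v_i - v_{i-1} \geq 0$; then $\sum_i \rho_i = v_m - v_0 \leq \operatorname{osc}_{[a,b]} g \leq M$, so $\rho_i + \rho_{i+1} \leq M$ and (K2) applies to every consecutive pair. For each interior piece $I_i$ with $1 \leq i \leq m-1$ I would test the two merging competitors obtained by setting $U \equiv v_{i-1}$ or $U \equiv v_{i+1}$ on $I_i$; each reduces $TV_K$ by at least $C_M \rho_i \rho_{i+1}$ by (K2). The two resulting fidelity-minimality inequalities add (the $g$-terms cancel) to $\lambda \ell_i (\rho_i + \rho_{i+1})/2 \geq C_M (\rho_i + \rho_{i+1})$, hence $\ell_i := |I_i| \geq 2C_M/\lambda$. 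Summing over the $m-1$ interior pieces produces $(m-1) \cdot 2C_M/\lambda \leq b - a$, which is the claimed estimate. The delicate step I anticipate is the monotonicity argument, since the $TV_K$-change from the one-sided modification has to come out strictly negative across every configuration of $v_{i+1}$; the choice to modify $I_i$ rather than $I_{i-1}$ is essential, because the stationarity condition at the would-be down-jump $x_i$ is exactly what controls $g$ on the right interval $I_i$ and makes the fidelity change non-positive.
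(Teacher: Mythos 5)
Your argument is correct, but it takes a genuinely different route from the paper. The paper's proof is two lines: Lemma \ref{LMon} shows (by chopping competitors, with no structure theorem needed) that any minimizer is monotone whenever $g$ is monotone, and then the subdivision argument of Theorem \ref{TMain} is rerun with Theorem \ref{TMon} (the only source of the $c_M/M$ constraint) replaced by this lemma, which is why $A_M$ improves to $2C_M$; an alternate proof via the quantitative estimate of Lemma \ref{LEBJ} is also given. You instead take Theorem \ref{TMain} as a black box and work directly on the resulting piecewise constant profile: the stationarity identity $g(x_i)=(v_{i-1}+v_i)/2$ at each jump (a localized form of Proposition \ref{PMF}, here extracted for general continuous $g$ by differentiating the fidelity in the jump position), a local competitor argument excluding wrong-way jumps, and a two-sided merging argument giving the per-gap bound $\ell_i\geq 2C_M/\lambda$ between consecutive jumps. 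What your route buys is a more localized and arguably sharper statement (an explicit lower bound on the distance between any two consecutive jumps, from which the count follows), and it avoids re-entering the proof of Theorem \ref{TMain}; what the paper's route buys is that monotonicity (Lemma \ref{LMon}) is established for an arbitrary $BV$ minimizer independently of the piecewise constant structure, which the paper reuses elsewhere (e.g.\ in Lemma \ref{LEBJ}). One small point to tighten: the two merging inequalities do not "add with the $g$-terms cancelling" as written, since one carries the factor $\rho_i$ and the other $\rho_{i+1}$; you must first divide each by its own jump size, after which the sum indeed gives $\tfrac{\lambda}{2}\,\ell_i(\rho_i+\rho_{i+1})\geq C_M(\rho_i+\rho_{i+1})$ and hence $\ell_i\geq 2C_M/\lambda$. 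With that normalization made explicit, the proof is complete and yields exactly the stated bound $m\leq\left[(b-a)\lambda/(2C_M)\right]+1$.
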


% 原稿 2024/5/23　1/4
To show Theorem \ref{TMain} and also Theorem \ref{TMainMon}, we introduce the notion of a coincidence set $C$, which is formally defined by
\[
	C = \left\{ x \in [a,b] \bigm|
	U(x) = g(x) \right\}.
\]
It turns out that a minimizer $U$ of $TV_{Kg}$ is always continuous on $C$ and outside this set $C$, $U$ is piecewise constant and
\begin{equation} \label{EB0}
	\sup_{[a,b]}U \leq \sup_{[a,b]}g, \quad
	\inf_{[a,b]}U\geq\inf_{[a,b]}g.
\end{equation}
Moreover, we are able to prove that $U$ has at most one jump on $(\alpha,\beta)$ if $(\alpha,\beta)\cap C=\emptyset$ and $\alpha,\beta\in C$.
 We also prove that if $\alpha,\beta\in C$ with $\alpha<\beta$ too close, then $U$ must be monotone on $(\alpha,\beta)$.
 Here, $(\alpha,\beta)$ may include some point of $C$.
 To show these properties, it suffices to assume the subadditivity of $K$ instead of (K2).
 It includes the case $TV_K$, where $K(\rho)=\rho$ so that $TV_K$ is the standard total variation.

We shall prove that if $\alpha\in C$ and $\beta\in C$ with $\alpha<\beta$ is too close, $U$ must be a constant on $(\alpha,\beta)$.
% 原稿 2024/5/23　2/4
 A key step (Lemma \ref{LLEN}) is to show that for a non-decreasing minimizer $U$ on $(a,b)$ and $\alpha,\beta\in C$ with $\alpha<\beta$, the estimate
\[
	\beta - \alpha > C_M/\lambda
\]
holds provided that there is $\gamma\in C\cap(\alpha,\beta)$ and $(\gamma,\beta)\cap C=\emptyset$ and that $\rho_2:=U(\beta)-U(\gamma)\geq\rho_1:= U(\gamma)-U(\alpha)$, where $C_M$ is the constant in (K2).
 Here is a rough strategy to prove the statement.
 Since $(\gamma,\beta)\cap C=\emptyset$, there is exactly one  jump point $x_1$ in $(\gamma,\beta)$.
 We compare $TV_{Kg}(U)$ and $TV_{Kg}(v)$ in $(\alpha,\beta)$, where $v$ equals $U(\alpha)$ on $(\alpha,x_1)$ and equals $U$ on $(x_1,\beta)$ (see Figure \ref{FGUV}).
 It is not difficult to prove
\begin{equation} \label{EB1}
	TV_K(U) - TV_K(v) \geq K(\rho_1) + K(\rho_2) - K(\rho_1+\rho_2)
	\geq C_M \rho_1 \rho_2
\end{equation}
since $\rho_1+\rho_2\leq M$ by \eqref{EB0}.
 The proof for 
\begin{equation} \label{EB2}
	\frac2\lambda \left(\mathcal{F}(U) - \mathcal{F}(v) \right)
	\geq - 2\rho_1 \rho_2 (x_1-\alpha)
\end{equation}
is more involved.
 It is not difficult to show
\begin{equation} \label{EB3}
	\int_\gamma^{x_1} \left\{ (U-g)^2 - (v-g)^2 \right\} dx
	\geq - \rho_1(\rho_1+\rho_2)(x_1-\gamma).
\end{equation}
The proof for
\begin{equation} \label{EB4}
	\int_\alpha^\gamma \left\{ (U-g)^2 - (v-g)^2 \right\} dx
	\geq - \rho_1^2(\gamma-\alpha)
\end{equation}
is more difficult.
 If $g$ is non-decreasing, we are able to prove
\[
	g(x) - U(x) \leq U(x_2+0) - U(x_2-0)
	\quad\text{for}\quad  x \in F,
\]
where $F=[x_0,x_2]\subset[a,b)$ is a maximal closed interval (called a facet) such that $U$ is a constant in the interior $\operatorname{int}F$ of $F$.
 For general interval $[x_0,x_2]$, one only expects such an inequality just in the average sense, i.e.,
\begin{equation} \label{EB5}
	\int_{x_0}^{x_2} \left( g(x) - U(x) \right) dx
	\leq \left( U(x_2+0) - U(x_2-0) \right) (x_2-x_0).
\end{equation}

The estimate \eqref{EB4} 
% 原稿 2024/5/23　3/4
follows from \eqref{EB5}.
 Combining \eqref{EB3} and \eqref{EB4}, we obtain \eqref{EB2} since $\rho_2\geq\rho_1$.
 The estimates \eqref{EB1} and \eqref{EB2} yield
\[
	TV_{Kg}(U) - TV_{Kg}(v) \geq \rho_1 \rho_2 \left(C_M - (x_1-\alpha)\lambda \right).
\]
If $\beta-\alpha\le C_M/\lambda$, $U$ cannot be a minimizer.
 Similarly, we are able to prove that if $U$ is continuous on $(\alpha,\beta)$ with $\alpha,\beta\in C$, then $U$ is a constant on $(\alpha,\beta)$.
 These observations yield Theorem \ref{TMain}.
 Theorem \ref{TMainMon} can be proved similarly to Theorem \ref{TMain} by noting that a minimizer $U$ is a monotone function.

% 原稿 2024/5/24、1/4
In a forthcoming paper, we give a quantitative estimate for $TV_{Kg}$ for monotone data $g\in C[a,b]$.
 It is easy to estimate $TV_{Kg}(u)$ for a piecewise constant $u$ from below.
 We approximate a general $BV$ function $u$ by piecewise constant functions $u_m$ so that $TV_{Kg}(u_m)\to TV_{Kg}(u)$.
 Using such an approximation result, we establish an estimate of $TV_{Kg}$ for a general $BV$ function.
 We notice that such an estimate gives another way to prove Theorem \ref{TMainMon} with improvement of the estimate of $m$ by $m\le\left[(b-a)\lambda\bigm/(2C_M)\right]+1$.

It is not difficult to prove that $TV_{Kg}$ is lower semicontinuous in the space of piecewise constant functions with at most $k$ jumps.
 Since the space is of finite dimension, its bounded closed set is compact.
 By Weierstrass' theorem, $TV_{Kg}$ admits a minimizer $v$ among piecewise constant functions with at most $k$ jumps with $\inf g\leq v\leq\sup g$.
 Thus, Theorem \ref{TMain} guarantees the existence of a minimizer in $BV(a,b)$.
 The existence of minimizer of $TV_{Kg}$ itself can be proved for general essential bounded measurable function $g$, i.e., $g\in L^\infty(\Omega)$ for general Lipschitz domain in $\mathbb{R}^n$ since it is known \cite{AFP} that $TV_K$ is lower semicontinuous in a suitable topology.
 We shall discuss this point in Section \ref{SEx}.
 Note that instead of (K2) subadditivity for $K$ is enough to have the existence of a minimizer.

% 原稿 2024/5/24、2/4
We believe Theorem \ref{TMain} extends for general $g\in L^\infty(a,b)$.
 In a forthcoming paper, we prove a weaker version asserting that there exists a piecewise constant minimizer satisfying the same property as in Theorem \ref{TMain}.
 All piecewise constant minimizers must have the same property but it does not exclude the possibility that there exists a non-piecewise constant minimizer.

Our results give an upper bound of the number of segments in a segmentation problem.
 The problem is finds $N\in\mathbb{N}$ and partition $\{I_1,\ldots,I_N\}$ and associate values $\{U_1,\ldots,U_N\}$ that minimizes
\[
	\sum_{j=1}^{N-1} K \left( |U_{j+1} - U_j| \right)
	+ \sum_{j=1}^N \frac\lambda2 \int_{I_j} \left|U_j - g(x) \right|^2\,dx.
\]
Theorem \ref{TMain} yields
\begin{cor} \label{CInt}
$N=m+1\le\left[(b-a)\lambda\bigm/A_M\right]+2$.
\end{cor}
The number of clusters or segments is bounded from above by a constant independent of detailed behavior of $g$.

The case $K(\rho)\equiv1$ for $\rho>0$ ($K(0)=0$) is sometimes called Pott's model originally introduced by Mumford and Shah \cite{MS89}.
 This $K\equiv1$ of course violates (K3).
 However, it turns out that this $K$ satisfies (K1), (K2) and a weaker assumption
\begin{enumerate}
\item[(K3w)] For $M>0$, there exists a positive constant $c_M$ such that
\[
	K(\rho) > c_M\rho
	\quad\text{for all}\quad
	0 < \rho \le M
\]
\end{enumerate}
and our estimate for $m$ is still valid by restricting the space of functions on the space of piecewise constant functions.
 For higher dimensional segmentation problem, the reader is referred to \cite{CT91}, \cite{MT93}.
 For example, in \cite{CT91}, the existence of a minimizer $U$ was proved.
 Moreover, in \cite{CT91} it is proved that $J_U$ is a closed set $K$ up to $\mathcal{H}^{n-1}$ measure zero set and that $U\in C^1(\Omega\backslash K)$ with $\nabla U=0$ in $\Omega\backslash K$.
 The paper \cite{Cha92} focuses one-dimensional problem.

Let us state our result for piecewise constant minimizers.

\begin{thm} \label{TPie}
	The conclusions of Theorem \ref{TMain}, Theorem \ref{TMainMon} and Corollary \ref{CInt} are still valid if we replace (K3) by (K3w) provided that $U$ is assumed to be piecewise constant (possibly with infinitely many jumps), i.e., $\int_{\Omega\backslash J_U} |DU|=0$ for $\Omega=(a,b)$.
\end{thm}
Our theorems also say that our minimizer is piecewise constant if $g$ is piecewise constant and $J_U\subset J_g$ since $U$ must be constant on $(\alpha,\beta)$ if $g$ is constant there.
 This type of preservation of piecewise constant structure for $TV_g$ is found in  \cite[2.4.2]{BF12} in one-dimensional setting.
 It is extended to vector-valued case for $TV_g$ and its anisotropic variants in \cite{GL19, GL24} by proving $|Du|\le|Dg|$.
 For the evolution problem, we also note that there is a characterization of faceted part of the minimizer via dual problem $\int|\operatorname{div}u|+\mathcal{F}$ for general $g\in L^2$; see \cite[Proposition 4.2]{BCNO11}.
 For higher-dimensional problem for $TV_g$, the preservation of piecewise constant structure is studied in \cite{BCN05, LMM17, KSS19}.
 Contrary to there results, our result says that minimizer in $BV$ is always piecewise constant for any $g$ ($\in C[a,b]$) with no jumps.

A natural open problem for $TV_{Kg}$ is for vector-valued functions.
 It is also natural to ask what happens when $\Omega$ is multi-dimensional.
 Here piecewise constant should be associated with Cacciopolli partition \cite{AFP}.
 However, it is not clear what is the number of jumps in multi-dimensional case.

As shown in \cite{GOU}, \cite{GOSU}, $TV_{Kg}$ is obtained as a singular limit ($\varepsilon\downarrow0$ limit) of the Kobayashi-Warren-Carter type energy  \cite{KWC1,KWC2,WKC}
\begin{align*}
	E_{\mathrm{KWC}g}^\varepsilon (u,v) &:= E_{\mathrm{KWC}}^\varepsilon (u,v) + \mathcal{F}(u) \\
	E_{\mathrm{KWC}}^\varepsilon (u,v) &:= \int_\Omega sv^2 |Du| + E_{\mathrm{sMM}}^\varepsilon (v), \quad s>0 \\
	E_{\mathrm{sMM}}^\varepsilon (v) &:= \frac{\varepsilon}{2} \int_\Omega |\nabla v|^2\; dx
	+ \frac{1}{2\varepsilon} \int_\Omega F(v)\; dx
\end{align*}
by minimizing order parameter $v$; here $F(v)$ is a single-well potential typically of the form $F(v)=(v-1)^2$ and $s$ is a positive parameter.
 In fact, in one-dimensional setting \cite{GOU}, the Gamma limit of $E_{\mathrm{sMM}}^\varepsilon (v)$ under the graph convergence formally equals
\[
	E_{\mathrm{sMM}}^0 (\Xi)
	= \sum_{i=1}^\infty 2 \left( G(\xi_i^-) + G(\xi_i^+) \right), \quad
	G(\tau) = \left| 
	\int_1^\tau \sqrt{F(\xi)}\; d\xi \right|
\]
if the limit of $v$ in $\Omega=(a,b)$ equals a set-valued function $\Xi$ of the form
\[
	\Xi(x) = \left\{
\begin{array}{ll}
	&\hspace{-0.7em}1, \quad x \notin \Sigma, \\
	&\hspace{-0.7em}[\xi_i^-, \xi_i^+] (\ni 1) \quad\text{for}\quad x_i \in \Sigma,
\end{array}
\right.
\]
where $\Sigma$ is some (at most) countable set.
 The Gamma limit of $E_{\mathrm{KWC}}^\varepsilon$ 
% 原稿 2024/5/24、3/4
equals
\[
	E_{\mathrm{KWC}}^0(u,\Xi) = \sum_{i=1}^\infty \left( s(\xi_{i,+}^-)^2 \left|u^+(x_i)-u^-(x_i)\right|
	+ 2 \left( G(\xi_i^-) + G(\xi_i^+)\right) \right)
	+ \int_{\Omega\backslash J_u} s|Du|,
\]
where $\xi_+=\max(\xi,0)$ and $x_i\in\Sigma$.
 Here for $u$, $L^1$ type limit is considered.
 If we minimize $E_{\mathrm{KWC}}^0(u,\Xi)$ with fixed $u$, $\xi^+$ must be one since $[\xi^-,\xi^+]\ni1$ and $G(1)=0$.
 Thus
\begin{align*}
	\inf_\Xi E_{\mathrm{KWC}}^0(u,\Xi)
	&= \sum_{i=1}^\infty \min_{\xi>0} \left( s(\xi_+)^2 \left|u^+(x_i) - u^-(x_i)\right| + 2G(\xi) \right)
	+ \int_{\Omega\backslash J_u} s |Du| \\
	&= sTV_K(u),
\end{align*}
where
\begin{equation} \label{EKdef}
	K(\rho) = \min_\xi \left( (\xi_+)^2 \rho + 2G(\xi)/s \right).
\end{equation}
In the case, $s=1$ and $F(v)=(v-1)^2$, a direct calculation shows that 
\[
	K(\rho) = \min_{\xi>0} \left( \xi^2 \rho + (\xi-1)^2 \right)
	= \frac{\rho}{\rho+1}.
\]
In Section \ref{SKdef}, we shall prove that $F\in C^1(\mathbb{R})$ satisfies $K$ defined in \eqref{EKdef} satisfies (K2) provided that
\[
	\varlimsup_{v\uparrow1} F'(v) \bigm/ (v-1) < \infty
\]
and that $F\ge0$ and $F(v)=0$ if and only if $v=0$.
 It turns out that $K$ obtained by \eqref{EKdef} is bounded while $K$ satisfying (K1), (K2), (K3) may not be bounded.
 A typical example is $K(\rho)=\rho/(1+\rho)^{1/2}$, which satisfies (K1), (K2), (K3).

If we replace $\int sv^2|Du|$ by $\int sv^2|Du|^2$, the energy corresponding to $E_{\mathrm{KWC}g}^\varepsilon$ is nothing but what is called the Ambrosio-Tortorelli energy \cite{AT}.
 Its singular limit is a Mumford-Shah functional
\[
	E_\mathrm{MS} (u,K) := s \int_{\Omega\backslash K} |\nabla u|^2 + \mathcal{H}^{n-1}(K) + \mathcal{F}(u),
\]
where $K$ is a closed set in $\Omega$ \cite{AT, AT2, FL}.
 The existence of a minimizer is obtained in \cite{GCL} by using the space of special functions with bounded variation, i.e., $SBV$ functions which is a subspace of $BV(\Omega)$.

A modified total variation energy $TV_K$ is not limited to a singular limit of the Kobayashi-Warren-Carter energy.
 In fact, $TV_K(u)$ like energy is derived as the surface tension of grain boundaries in polycrystals by \cite{LL}, where $u$ is taken as a piecewise constant (vector-valued) function; see also \cite{FGaSp} for more recent development.
 The function $K$ measuring jumps may not be isotropic but still concave.
 In \cite{ELM}, $TV_K$ type energy for a piecewise constant function is also considered to study motion of a  grain boundary.
 However, in their analysis, the convexity of $K$ is assumed.

% 原稿 2024/5/24、4/4
This paper is organized as follows.
 In Section \ref{SEx}, we give a rigorous formulation of $TV_K$ and prove the existence of a minimizer of $TV_{Kg}$ for $g\in L^\infty(\Omega)$ for a general Lipschitz domain in $\mathbb{R}^n$.
 In Section \ref{SCoi}, for $n=1$ we study a profile of a minimizer $U$ for $TV_{Kg}$ including $TV_g$ outside the coincidence set.
 We also prove that $U$ is monotone in $(\alpha,\beta)$ with $\alpha,\beta\in C$ provided that $\alpha$ and $\beta$ is close.
 In Section \ref{SGe}, we prove that $\alpha,\beta\in C$ cannot be too close if $K$ satisfies (K2).
 We prove Theorem \ref{TMain} and Theorem \ref{TMainMon}.
 In Section \ref{SKdef}, we shall discuss a sufficient condition that $K$ in \eqref{EKdef} satisfies (K2).

%%%%%%%%%%%%%%

% 原稿 2024/4/9、1/7
\section{Existence of a minimizer} \label{SEx} % Section 2
In this section, after giving a precise definition of $TV_K$, we give an existence result for its Rudin-Osher-Fatemi type energy.
 The proof is based on a standard compactness result for $TV$ and a classical lower semicontinuity result for $TV_K$.

We recall a standard notation as in \cite{AFP}.
 Let $\Omega$ be a domain in $\mathbb{R}^n$.
 For a locally integrable (real-valued) function $u$, its total variation $TV(u)$ in $\Omega$  is defined as 
\[
	TV(u) = \sup \left\{ \int_\Omega - u \operatorname{div} \varphi\; dx \biggm|
	\varphi \in C_c^\infty(\Omega,\mathbb{R}^n),\ 
	\left| \varphi(x) \right| \leq 1\ \text{in}\ \Omega \right\},
\]
where $C_c^\infty(\Omega,\mathbb{R}^n)$ denotes the space of all $\mathbb{R}^n$-valued smooth functions with compact support in $\Omega$.
 An integrable function $u$, i.e., $u\in L^1(\Omega)$, is called a function of bounded variation if $TV(u)<\infty$.
 The space of all such function is denoted by $BV(\Omega)$, i.e.,
\[
	BV(\Omega) = \left\{ u \in L^1(\Omega) \bigm|
	TV(u) < \infty \right\}.
\]
By Riesz's representation theory, one easily observe that $TV(u)$ is finite if and only if the distributional derivative $Du$ of $u$ is a finite Radon measure on $\Omega$ and its total variation $|Du|(\Omega)$ in $\Omega$ equals $TV(u)$.

We next define a jump discontinuity of a locally integrable function.
 Let $B_r(x)$ denote an open ball of radius $r$ centered at $x$ in $\mathbb{R}^n$.
 In other words, 
% 原稿 2024/4/9、2/7
\[
	B_r(x) = \left\{ y \in \mathbb{R}^n \bigm|
	| y - x | < r \right\}.
\]
For a unit vector $\nu\in\mathbb{R}^n$, we define a half ball of the form
\[
	B_r^\pm(x,\nu) = \left\{ y \in B_r(x) \bigm|
	\pm\nu \cdot ( y - x ) \geq 0 \right\},
\]
where $a\cdot b$ for $a,b\in\mathbb{R}^n$ denotes the standard inner product in $\mathbb{R}^n$.
 Let $w$ be a locally integrable function in $\Omega$.
 We say that a point $x\in\Omega$ is a (approximate) jump point of $w$ if there exists a unit vector $\nu_w\in\mathbb{R}^n$, $w^\pm\in\mathbb{R}$, $w^+\neq w^-$, such that
\[
	\lim_{r\downarrow0} \frac{1}{\mathcal{L}^n\left(B_r^\pm(x,\nu_w)\right)}
	\int_{B_r^\pm(x,\nu_w)} \left| w(y) - w^\pm \right| dy = 0.
\]
Here $\mathcal{L}^n$ denotes the Lebesgue measure in $\mathbb{R}^n$ so this integral is the average of $\left|w(y)-w^\pm\right|$ over $B_r^\pm(x,\nu_w)$.
 The set of all jump points of $w$ is denoted by $J_w$ and called the (approximate) jump (set) of $w$.
 By definition, $J_w$ is contained in the set $S_w$ of (approximate) discontinuity point of $w$, i.e.,
\begin{multline*}
	S_w = \Biggl\{ x \in \Omega \biggm|
	\lim_{r\downarrow0} \frac{1}{\mathcal{L}^n\left(B_r(x)\right)}
	\int_{B_r(x)} \left| w(y) - z \right| dy = 0\ \text{does not hold} \\
	\text{for any choice of}\ z \in \mathbb{R} \Biggr\}.
\end{multline*}

By the Federer-Vol'pert theorem \cite[Theorem 3.78]{AFP}, we know $\mathcal{H}^{n-1}(S_u\backslash J_u)=0$ provided that $u\in BV(\Omega)$.
 Moreover, $S_u$ is countably $\mathcal{H}^{n-1}$-rectifiable, i.e., $S_u$ can be covered by a countable union of Lipschitz graphs up to an $\mathcal{H}^{n-1}$ measure zero set.
 In particular, $J_u$ is also countably $\mathcal{H}^{n-1}$-rectifiable.
 Quite recently, it is proved that $J_u$ is always countably $\mathcal{H}^{n-1}$-rectifiable if we only assume that $w$ is locally integrable \cite{DN}.
% 原稿 2024/4/9、3/7
 For $u\in BV(\Omega)$, the value $u^\pm$ can be viewed as a trace of $u$ on a countably $\mathcal{H}^{n-1}$-rectifiable set \cite[Theorem 3.77, Remark 3.79]{AFP} except $\mathcal{H}^{n-1}$ negligible set (up to permutation of $u^+$ and $u^-$).
 We now recall a unique decomposition of the Radon measure $Du$ for $u\in BV(\Omega)$ of the form
\[
	Du = D^a u + D^s u, \quad
	D^s u = D^c u + (u^+ - u^-) \cdot \nu_u \mathcal{H}^{n-1} \lfloor J_u;
\]
 see \cite[Section 3.8]{AFP}.
 The term $D^au$ denotes the absolutely continuous part and $D^su$ denotes the singular part with respect to the Lebesgue measure.
 The term $D^au=\nabla u\mathcal{L}^n$, where $\nabla u\in\left(L^1(\Omega)\right)^n$.
 The singular part is decomposed into two parts; $D^cu$ vanishes on sets of finite $\mathcal{H}^{n-1}$ measure.
 For a measure $\mu$ on $\Omega$ and a set $A\subset\Omega$, the associate measure $\mu\lfloor A$ is defined as
\[
	(\mu \lfloor A)(W) = \mu(A \cap W), \quad
	W \subset \Omega.
\]

We now consider a total variation type energy measuring jumps in a different way.
 For $u\in BV(\Omega)$, we set
\[
	TV_K(u) := \left( TV_K(u,\Omega) := \right) \int_{\Omega\backslash J_u} |Du|
	+ \int_{J_u} K \left(|u^+ - u^-|\right) d\mathcal{H}^{n-1}.
\]
Here the density function is assumed to satisfy following conditions.
\begin{enumerate}
\item[(K1)] $K:[0,\infty)\to[0,\infty)$ is lower semicontinuous, non-decreasing with $K(0)=0$.
\item[(K2w)] $K$ is subadditive, i.e., $K(\rho_1+\rho_2)\leq K(\rho_1)+K(\rho_2)$.
\item[(K3)] $\lim_{\rho\to0} K(\rho)/\rho=1$.
\end{enumerate}
\begin{thm} \label{TEx1}
Assume that $K$ satisfies (K1), (K2w) and (K3).
 Let $\Omega$ be a bounded domain with Lipschitz boundary in $\mathbb{R}^n$.
 Let $\mathcal{E}$ be a lower semicontinuous function in $L^1(\Omega)$ with values in $[0,\infty]$.
 Then $TV_K+\mathcal{E}$ 
% 原稿 2024/4/9、4/7 
has a  minimizer on $BV(\Omega)$ provided that a coercivity condition
\[
	\inf_{u\in BV(\Omega)} (TV_K + \mathcal{E})(u)
	= \inf_{\substack{u\in BV(\Omega) \\ \lVert u\rVert_\infty\leq M}} (TV_K + \mathcal{E})(u)
\]
for some $M>0$, where $\lVert\cdot\rVert_\infty$ denotes the $L^\infty$-norm.
\end{thm}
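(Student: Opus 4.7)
The plan is to apply the direct method of the calculus of variations. By the coercivity hypothesis, I choose a minimizing sequence $\{u_k\} \subset BV(\Omega)$ with $\|u_k\|_{L^\infty(\Omega)} \leq M$ along which $(TV_K + \mathcal{E})(u_k) \downarrow \inf(TV_K + \mathcal{E})$. Since $\mathcal{E} \geq 0$ and the infimum is finite (take $u \equiv 0$ or any bounded admissible function), $TV_K(u_k)$ stays uniformly bounded.

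To extract an $L^1$-convergent subsequence I compare $TV_K$ with the usual $TV$. From (K3) and the monotonicity in (K1w), a two-case argument produces a constant $c = c(M) > 0$ with $K(\rho) \geq c\rho$ for all $\rho \in [0, 2M]$: condition (K3) yields $K(\rho) \geq \rho/2$ on some interval $[0,\delta]$, while on $[\delta, 2M]$ monotonicity gives $K(\rho) \geq K(\delta) \geq (K(\delta)/(2M))\rho$. Since $|u_k^+ - u_k^-| \leq 2\|u_k\|_\infty \leq 2M$, it follows that
\[
	TV_K(u_k) \geq \int_{\Omega\setminus J_{u_k}} |Du_k|
	+ c \int_{J_{u_k}} |u_k^+ - u_k^-|\, d\mathcal{H}^{n-1}
	\geq c\, TV(u_k).
\]
Hence $TV(u_k)$ is uniformly bounded, and combined with the trivial $L^1$-bound $\|u_k\|_{L^1} \leq M|\Omega|$, the $BV$-compactness theorem on the bounded Lipschitz domain $\Omega$ delivers a subsequence (not relabeled) converging in $L^1(\Omega)$ to some $u$. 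Extracting a further a.e.\ convergent subsequence, one has $\|u\|_\infty \leq M$, and $u \in BV(\Omega)$ follows from the $L^1$-lower semicontinuity of $TV$.

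It remains to pass to the liminf in $TV_K + \mathcal{E}$. Lower semicontinuity of $\mathcal{E}$ in $L^1(\Omega)$ is given by assumption. For $TV_K$, I would invoke the classical $L^1$-lower semicontinuity theorem for $BV$-functionals whose jump part has a non-decreasing, lower semicontinuous, subadditive density with the correct slope normalization at the origin --- conditions (K1w), (K2w), (K3) are precisely those appearing in the theorem (see \cite{AFP}). Combining the two semicontinuity statements yields
\[
	(TV_K + \mathcal{E})(u) \leq \liminf_k (TV_K + \mathcal{E})(u_k)
	= \inf (TV_K + \mathcal{E}),
\]
so $u$ is the desired minimizer. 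The only genuine obstacle here is locating and quoting the sharpest form of the $L^1$-lower semicontinuity theorem for $TV_K$ from \cite{AFP}, matching the bulk integrand $|Du|$ with the jump integrand $K$ via (K3); the coercivity-and-compactness portion of the argument is routine once the pointwise bound $K(\rho) \geq c\rho$ is in hand.
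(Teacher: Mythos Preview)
Your argument is correct and matches the paper's own proof essentially line for line: the paper likewise takes an $L^\infty$-bounded minimizing sequence, uses the bound $K(\rho)\geq c_M\rho$ (from (K1w) and (K3)) to control $TV$ and extract a weak* $BV$-convergent subsequence (its Proposition~\ref{PCom}), and then invokes the lower semicontinuity of $TV_K$ from \cite[Theorem~5.4]{AFP} (its Proposition~\ref{PLS}). Your use of the bound $|u_k^+-u_k^-|\leq 2M$ is in fact slightly more careful than the paper's phrasing.
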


We consider the Rudin-Osher-Fatemi type energy for $TV_K$, i.e.,  for $g\in L^2(\Omega)$, 
\[
	TV_{Kg} (u) := TV_K(u) + \mathcal{F}(u), \quad
	\mathcal{F}(u) = \frac{\lambda}{2} \int_\Omega |u-g|^2\; dx.
\]
\begin{thm} \label{TEx2}
Assume that $K$ satisfies (K1), (K2w) and (K3).
 Let $\Omega$ be a bounded domain with Lipschitz boundary in $\mathbb{R}^n$.
 Assume that $g\in L^\infty(\Omega)$.
 Then there is an element $u_0\in BV(\Omega)$ such that
\[
	TV_{Kg} (u_0) = \inf_{u\in BV(\Omega)} TV_{Kg}(u).
\]
In other words, there is at least one minimizer of $TV_{Kg}$.
\end{thm}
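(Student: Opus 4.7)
The plan is to deduce Theorem \ref{TEx2} directly from Theorem \ref{TEx1} by taking $\mathcal{E} = \mathcal{F}$. This reduces the task to two verifications: first, that $\mathcal{F}\colon L^1(\Omega)\to[0,\infty]$ is lower semicontinuous; second, that $\mathcal{F}$ combines with $TV_K$ to satisfy the coercivity condition stated in Theorem \ref{TEx1}, namely that minimization may be restricted to $\{u\in BV(\Omega)\mid\lVert u\rVert_\infty\leq M\}$ for some $M>0$.

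For the lower semicontinuity, I would argue as follows. Suppose $u_n\to u$ in $L^1(\Omega)$ and set $L=\liminf_n \mathcal{F}(u_n)$. Pick a subsequence along which $\mathcal{F}(u_{n_k})\to L$; extract a further subsequence converging almost everywhere to $u$. Then $|u_{n_k}-g|^2\to|u-g|^2$ pointwise a.e., and Fatou's lemma gives $\mathcal{F}(u)\leq L$. So $\mathcal{F}$ is $L^1$-lower semicontinuous with values in $[0,\infty]$, as required.

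For coercivity, I would use a truncation argument with $M:=\lVert g\rVert_{L^\infty(\Omega)}$, which is finite since $g\in L^\infty(\Omega)$. Set $T_M u := \max\bigl(-M,\min(M,u)\bigr)$. Since $g$ takes values in $[-M,M]$ almost everywhere, an elementary pointwise inequality gives $|T_M u-g|\leq|u-g|$ a.e., hence $\mathcal{F}(T_M u)\leq\mathcal{F}(u)$. On the other hand, $T_M$ is 1-Lipschitz and monotone, so standard BV chain-rule facts (see \cite[Sections 3.10, 3.11]{AFP}) yield $T_M u\in BV(\Omega)$, the inclusion $J_{T_M u}\subset J_u$, the identity $(T_M u)^\pm = T_M(u^\pm)$ on $J_{T_M u}$, and $|D^a(T_M u)|+|D^c(T_M u)|\leq |D^a u|+|D^c u|$ as measures outside $J_u$. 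Using the monotonicity of $K$ from (K1w), one gets $K\bigl(|(T_M u)^+-(T_M u)^-|\bigr)\leq K\bigl(|u^+-u^-|\bigr)$ on $J_{T_M u}$, and together these estimates yield $TV_K(T_M u)\leq TV_K(u)$. Consequently $(TV_K+\mathcal{F})(T_M u)\leq (TV_K+\mathcal{F})(u)$ for every $u\in BV(\Omega)$, which gives the coercivity identity with this choice of $M$.

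The main obstacle, as usual with generalized total variations, is the monotonicity $TV_K(T_M u)\leq TV_K(u)$ under truncation; naive decomposition arguments must be careful because the Cantor part of $Du$ need not be absolutely continuous with respect to $|Du|$ in a simple form, and the jump set may shrink under truncation. Invoking the chain-rule structure for compositions of BV functions with a 1-Lipschitz monotone map handles this cleanly. With these two ingredients, Theorem \ref{TEx1} immediately provides a minimizer of $TV_{Kg}=TV_K+\mathcal{F}$ in $BV(\Omega)$, completing the proof.
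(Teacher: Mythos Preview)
Your proposal is correct and follows essentially the same route as the paper: apply Theorem \ref{TEx1} with $\mathcal{E}=\mathcal{F}$, check $L^1$-lower semicontinuity of $\mathcal{F}$, and verify the coercivity condition by truncating at level $M\sim\lVert g\rVert_\infty$. The only noteworthy difference is that you spell out the inequality $TV_K(T_M u)\leq TV_K(u)$ via the BV chain rule and the monotonicity of $K$, whereas the paper's proof handles only the fidelity comparison $\mathcal{F}(u_M)\leq\mathcal{F}(u)$ explicitly and tacitly relies on the truncation bound for $TV_K$ (which it invokes without proof elsewhere, e.g.\ in the proof of Theorem \ref{TMon}); in that sense your argument is slightly more complete.
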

\begin{proof}[Proof of Theorem \ref{TEx2} admitting Theorem \ref{TEx1}.]
In the case $\mathcal{E}=\mathcal{F}$, the lower semicontinuity of $\mathcal{E}$ in $L^1(\Omega)$ is rather clear.
 If $g\in L^\infty(\Omega)$, then for a chopped function  $u_M=\max\left(\min(u,M),-M\right)$ with $M>\lVert g\rVert_\infty$, we see that
\begin{align*}
	&\frac2\lambda \left(\mathcal{F}(u) - \mathcal{F}(u_M) \right)
	= \int_{u\geq M} |u-g|^2\; dx - \int_{u\geq M} |M-g|^2\; dx \\
	&+ \int_{u\leq-M} |u-g|^2\; dx - \int_{u\leq-M} |{-}M-g|^2\; dx \geq 0, \\
	&TV_K(u) - TV_K(u_M) \ge 0.
\end{align*}
Thus the coercivity condition is fulfilled, and  Theorem \ref{TEx2} follows from Theorem \ref{TEx1}.
\end{proof}
%
% 原稿 2024/4/9、5/7
In the rest of this section, we shall prove Theorem \ref{TEx1} by a simple direct method.
 We begin with compactness.
\begin{prop} \label{PCom}
Assume that (K1) and (K3) are fulfilled.
 Assume that  $\Omega$ is a bounded domain with Lipschitz boundary in $\mathbb{R}^n$.
 Let $\{u_k\}_{k=1}^\infty$ be a sequence in $BV(\Omega)$ such that
\[
	\sup_{k\geq1} TV_K(u_k) < \infty \quad\text{and}\quad
	\sup_{k\geq1} \lVert u_k \rVert_\infty < \infty.
\]
Then there is a subsequence $\{u_{k'}\}$ and $u\in BV(\Omega)$ such that $u_{k'}\to u$ strongly in $L^1(\Omega)$ and $Du_{k'}\to Du$ weak* in the space of bounded measures.
 In other words, $u_{k'}$ sequentially weakly* converges to $u$ in $BV(\Omega)$.
\end{prop}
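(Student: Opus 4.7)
The plan is to reduce Proposition \ref{PCom} to the classical compactness theorem for $BV(\Omega)$ on a bounded Lipschitz domain by dominating the ordinary total variation by $TV_K$ on $L^\infty$-bounded sequences. Set $M := \sup_k \lVert u_k \rVert_\infty$, so that every jump amplitude $|u_k^+ - u_k^-|$ lies in $[0, 2M]$. I would first produce a constant $c = c(M) > 0$ with the linear minorant $K(\rho) \geq c\rho$ for all $\rho \in (0, 2M]$. Once this is available, together with the measure decomposition recalled in Section \ref{SEx}, it immediately yields $TV(u_k) \leq \max\{1, 1/c\}\, TV_K(u_k)$, and standard $BV$ compactness supplies the desired subsequence.

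For the linear minorant: by (K3), pick $\delta \in (0, 2M]$ with $K(\rho) \geq \rho/2$ on $(0, \delta]$; in particular $K(\delta) \geq \delta/2 > 0$. Since $K$ is non-decreasing by (K1w), $K(\rho) \geq K(\delta) \geq (K(\delta)/(2M))\,\rho$ for every $\rho \in [\delta, 2M]$, so $c := \min\{1/2,\, K(\delta)/(2M)\} > 0$ works. To pass to the standard total variation, use the decomposition $Du = D^a u + D^c u + (u^+ - u^-)\nu_u \mathcal{H}^{n-1}\lfloor J_u$; since $|D^a u| + |D^c u|$ does not charge the $\mathcal{H}^{n-1}$-rectifiable set $J_u$,
\[
TV(u) = \int_{\Omega \setminus J_u} |Du| + \int_{J_u} |u^+ - u^-|\, d\mathcal{H}^{n-1}.
\]
Applying the linear bound to the jump integrand gives the stated domination of $TV(u_k)$ by $TV_K(u_k)$, and combined with the boundedness of $\Omega$ and the $L^\infty$-bound, this produces $\sup_k \lVert u_k \rVert_{BV(\Omega)} < \infty$.

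With this uniform $BV$-bound in hand, invoke the classical compactness theorem for $BV$ on a bounded Lipschitz domain (e.g.\ \cite[Theorem 3.23]{AFP}) to extract a subsequence $\{u_{k'}\}$ converging strongly in $L^1(\Omega)$ to some $u \in BV(\Omega)$. The $L^1$-convergence identifies the distributional limit of $Du_{k'}$ as $Du$, and the uniform total variation bound upgrades this (via Banach--Alaoglu, after passing to a further subsequence if necessary) to weak* convergence $Du_{k'} \rightharpoonup^* Du$ in the space of bounded Radon measures on $\Omega$. The only delicate ingredient is the construction of the linear minorant, which requires reconciling the infinitesimal asymptotic from (K3) with the monotonicity of $K$ on a range bounded away from zero; without such a minorant the jump component of $TV(u_k)$ cannot be controlled and the reduction to classical $BV$ compactness fails. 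The step is elementary but indispensable.
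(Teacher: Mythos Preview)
Your proof is correct and follows essentially the same route as the paper: both establish a linear minorant $K(\rho)\geq c\rho$ on the range of jump amplitudes dictated by the uniform $L^\infty$ bound, deduce that $TV(u_k)$ is controlled by $TV_K(u_k)$, and then invoke the standard $BV$ compactness theorem \cite[Theorem 3.23]{AFP}. Your write-up is slightly more careful in that you explicitly note the jump amplitudes lie in $[0,2M]$ (the paper's proof tacitly bounds them by $M$), and you spell out the construction of the constant $c$ from (K1w) and (K3) rather than just asserting it.
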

\begin{proof}
By (K1) and (K3), we see that for any $M$, there is $c_M \in(0,1)$ such that
\begin{equation} \label{EKEB}
	K(\rho) > c_M\rho \quad\text{for}\quad
	0 < \rho \leq M.
\end{equation}
In other words, (K3w) holds.
 If $M$ is chosen such that $\lVert u_k \rVert_\infty\leq M$, then
\[
	TV(u_k) \leq \frac{1}{c_M} TV_K(u_k).
\]
Thus $\left\{TV(u_k)\right\}$ is bounded.
 By the standard compactness for $BV(\Omega)$ function \cite[Theorem 3.23]{AFP}, \cite[Theorem 1.19]{Giu} yields the desired results.
\end{proof}

For a lower semicontinuity, we have
\begin{prop} \label{PLS}
Assume (K1), (K2w) and (K3), then $TV_K$ is sequentially weakly* lower semicontinuous in $BV(\Omega)$.
\end{prop}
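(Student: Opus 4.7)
The plan is to derive the proposition from a classical lower semicontinuity theorem for free discontinuity functionals on $BV(\Omega)$, such as \cite[Theorem 5.22]{AFP}. Using the canonical decomposition
\[
	Du = \nabla u\,\mathcal{L}^n + D^c u + (u^+ - u^-)\nu_u\,\mathcal{H}^{n-1}\lfloor J_u,
\]
I would first rewrite
\[
	TV_K(u) = \int_\Omega |\nabla u|\, dx + |D^c u|(\Omega) + \int_{J_u} K(|u^+ - u^-|)\, d\mathcal{H}^{n-1},
\]
placing $TV_K$ in the canonical form with bulk density $f(p) = |p|$ (which is $1$-homogeneous, so $f^\infty = f$) and jump density $\varphi(a,b,\nu) = K(|a-b|)$. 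Then for a sequence $u_k$ weakly* convergent to $u$ in $BV(\Omega)$, we have $u_k \to u$ in $L^1(\Omega)$; after passing to a subsequence along which $TV_K(u_k) \to \ell := \liminf_k TV_K(u_k)$ (assumed finite), the goal is to show $TV_K(u) \leq \ell$.

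Next, I would verify the structural hypotheses of the cited theorem. The assumption that $f$ is convex and lsc is immediate. The conditions on $\varphi$, namely lower semicontinuity and the symmetries $\varphi(a,b,\nu) = \varphi(b,a,\nu) = \varphi(a,b,-\nu)$, follow from (K1w) together with the form of $\varphi$. The calibration condition relating bulk and surface densities,
\[
	\lim_{|a-b|\to 0}\varphi(a,b,\nu)/|a-b| = f^\infty(\nu) = 1,
\]
is precisely (K3); without it, a sequence of many small jumps could underapproximate an absolutely continuous part, breaking lsc. The delicate hypothesis, which carries the weight of the argument, is the \emph{BV-ellipticity} of $\varphi$, encoding the right multi-dimensional relaxation behavior of the surface part.

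The main obstacle, then, is verifying BV-ellipticity from (K2w). Because $\varphi(a,b,\nu) = K(|a-b|)$ depends only on the size of the jump, BV-ellipticity reduces, via slicing the unit ball in the direction $\nu$, to the one-dimensional statement: for every $v \in BV(0,1)$ with $v(0^+) = a$ and $v(1^-) = b$,
\[
	\int_0^1 |v'_{ac}|\, dx + |D^c v|((0,1)) + \sum_{x \in J_v} K\bigl(|v(x^+) - v(x^-)|\bigr) \geq K(|b-a|).
\]
I would prove this by approximating the diffuse (absolutely continuous plus Cantor) part of $Dv$ by a sum of small signed increments $\delta_i$ on a fine partition; by (K3), $\sum K(|\delta_i|)$ converges to the diffuse variation as the partition is refined, so the left side is bounded below (up to $o(1)$) by $\sum_i K(|\delta_i|) + \sum_{\text{jumps}} K(|\text{jump}|)$. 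Iterating (K2w) yields $\sum_j K(s_j) \geq K(\sum_j s_j)$, and then the triangle inequality $\sum_j s_j \geq |b-a|$ combined with monotonicity of $K$ gives the required lower bound $K(|b-a|)$. With all hypotheses verified, the cited lsc theorem yields $TV_K(u) \leq \ell$, completing the proof.
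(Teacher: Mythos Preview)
Your approach is workable in principle but takes an unnecessarily long detour compared with the paper. The paper does not verify BV-ellipticity at all: it simply invokes \cite[Theorem~5.4]{AFP} (restated in the paper as Proposition~\ref{PLS2}), which is tailored to functionals of the form
\[
F(u)=\int_{\Omega\setminus J_u}\varphi(|\nabla u|)\,dx+\beta|D^cu|(\Omega)+\int_{J_u}K(|u^+-u^-|)\,d\mathcal{H}^{n-1}
\]
and already takes as hypotheses exactly the conditions (K1w), (K2w), and the matching condition $\lim_{t\to\infty}\varphi(t)/t=\beta=\lim_{t\downarrow0}K(t)/t$, i.e., (K3). With $\varphi(t)=t$ and $\beta=1$ the proposition is immediate; subadditivity is consumed directly by the theorem and never needs to be upgraded to BV-ellipticity by hand.

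By contrast, you route the argument through the more general machinery of \cite[Theorem~5.22]{AFP}, which requires BV-ellipticity of the surface integrand, and then you propose to derive BV-ellipticity from (K2w). This is legitimate---for isotropic densities $\varphi(a,b,\nu)=K(|a-b|)$, subadditivity together with monotonicity is known to imply BV-ellipticity---but your sketch blurs what BV-ellipticity actually asks for. The definition involves \emph{piecewise constant} competitors on a cube and only the surface term; the one-dimensional inequality you write down (including the absolutely continuous and Cantor parts of $Dv$) is a stronger statement than BV-ellipticity and is essentially the content of Lemma~\ref{LCpTK} in the paper, not the ellipticity condition itself. For the purpose of checking the hypothesis of the general theorem, you only need the simpler inequality $\sum_i K(|\text{jump}_i|)\ge K(|b-a|)$ on slices, which follows directly from iterating (K2w), the triangle inequality, and monotonicity. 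Either way, the paper's direct citation of \cite[Theorem~5.4]{AFP} is the cleaner path.
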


This is a special form of the lower semicontinuity result \cite[Theorem 5.4]{AFP}.
 We restate it for the reader's convenience. 
% 原稿 2024/4/9、6/7
 We consider
\[
	F(u) = \int_{\Omega\backslash J_u} \varphi \left( |\nabla u| \right) dx
	+ \beta |D^c u|(\Omega)
	+ \int_{J_u} K \left( | u^+ - u^- | \right) d\mathcal{H}^{n-1}.
\]
\begin{prop} \label{PLS2}
Let $\varphi:[0,\infty)\to[0,\infty)$ be a non-decreasing, lower semicontinuous and convex function.
 Assume that $K:(0,\infty)\to[0,\infty)$ is a non-decreasing, lower semicontinuous and subadditive function and $\beta\in[0,\infty)$.
 Then $F$ is sequentially weakly* lower semicontinuous in $BV(\Omega)$ provided that
\[
	\lim_{t\uparrow\infty} \frac{\varphi(t)}{t} = \beta
	= \lim_{t\downarrow0} \frac{K(t)}{t}.
\]
\end{prop}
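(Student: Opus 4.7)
The plan is to set up a Reshetnyak-type lower semicontinuity argument that accommodates the differing convex/subadditive structure on the continuous versus jump parts of $Du$. Let $u_k \to u$ weakly* in $BV(\Omega)$; after extracting a subsequence I may assume $\liminf F(u_k) = \lim F(u_k) < \infty$, and the goal is to show $F(u) \leq \lim F(u_k)$. The coercivity bound $\varphi(t) \geq \beta t - c$ (a consequence of convexity and $\varphi(t)/t \to \beta$) combined with the boundedness of $F(u_k)$ and $\|u_k\|_{L^1}$ guarantees $\sup_k |Du_k|(\Omega) < \infty$, so the weak* limit $Du$ is well defined.

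First I would introduce the recession $\varphi^\infty(t) := \lim_{s\to\infty}\varphi(st)/s = \beta t$, which is well defined by hypothesis. Since $\varphi$ is convex with this recession, the classical Reshetnyak lower semicontinuity theorem applied to the convex integrand $f(p) := \varphi(|p|)$ on $\mathbb{R}^n$ gives weak* LSC of
\[
G(u) := \int_\Omega \varphi(|\nabla u|)\,dx + \beta \, |D^c u|(\Omega) + \beta \int_{J_u} |u^+ - u^-|\,d\mathcal{H}^{n-1},
\]
which is exactly the Reshetnyak integral of $f$ against the vector measure $Du$ on $BV(\Omega)$.

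The delicate part is replacing $\beta t$ on the jump set by $K(t)$. Subadditivity typically gives $K(t) \leq \beta t$ (as in the model case $K(t)=t/(1+t)$), so $F \leq G$ and LSC of $G$ alone does not suffice. Here I would combine two observations. \emph{Subadditivity} controls one direction of mass transfer in the limit: if many small jumps of $u_k$ merge into a single larger jump of $u$, then $\sum_i K(t_i) \geq K(\sum_i t_i)$ keeps the jump functional lower semicontinuous under such concentration. \emph{The matching condition} $K(t)/t \to \beta$ as $t \downarrow 0$ handles the other direction: absolutely continuous or Cantor mass of $u_k$ can condense into jumps of $u$ in the limit, and the matching condition ensures such emerging jumps are charged by $K$ at a rate consistent with what the Reshetnyak integrand $\varphi^\infty = \beta$ would have charged on the continuous part. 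Concretely I would carry out a chopping/blow-up argument: approximate $u$ by piecewise constant $u_m$ (chopping at finely spaced levels), verify $F(u_m) \to F(u)$ using continuity of $K$ together with the matching condition at $0$, and establish the semicontinuity inequality for the piecewise constant case directly from subadditivity and Fatou's lemma applied on each connected piece of the jump set.

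The main obstacle is precisely this interaction of the three parts of the Lebesgue decomposition $Du = \nabla u\,\mathcal{L}^n + D^c u + (u^+ - u^-)\nu_u\mathcal{H}^{n-1}\llcorner J_u$ under weak* convergence: mass can freely transfer among the AC, Cantor, and jump parts in the limit, and neither $G$ alone nor the $K$-jump term alone is stable under such transfers. Only the simultaneous matching at both endpoints $t\to\infty$ and $t\to 0$ makes the combined functional stable. The cleanest implementation of this program is the general semicontinuity theorem of Ambrosio for integral functionals of linear growth on $BV$ (AFP, Theorem 5.4), of which Proposition \ref{PLS2} is a direct specialization, so after the above reduction I would simply cite that framework rather than rebuild its blow-up machinery.
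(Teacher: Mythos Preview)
Your proposal is correct and aligns with the paper's treatment: the paper does not prove Proposition~\ref{PLS2} at all but simply records it as a special case of \cite[Theorem~5.4]{AFP} (originally due to Bouchitt\'e--Buttazzo \cite{BB}), which is exactly where your sketch lands after the motivational discussion. The preliminary Reshetnyak and chopping remarks you give are reasonable heuristics but are not needed once you invoke that reference.
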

See \cite[Theorem 5.4]{AFP}.
 Such a lower semicontinuity is originally due to Bouchitt\'e and Buttazzo \cite{BB}.
 In our setting $\varphi(t)=t$, $\beta=1$.
% 原稿 2024/4/9、7/7
\begin{proof}[Proof of Theorem \ref{TEx1}]
Let $\{u_k\}_{k=1}^\infty$ be a minimizing sequence of $TV_K+\mathcal{E}$, i.e.,
\[
	\lim_{k\to\infty} (TV_K + \mathcal{E})(u_k)
	= \inf_{u\in BV} (TV_K + \mathcal{E}),
\]
which is bounded in $L^\infty(\Omega)$.
 By compactness (Proposition \ref{PCom}), $\{u_k\}$ contains a convergent subsequence still denoted by $\{u_k\}$ to some $u\in BV(\Omega)$, sequentially strong in $L^1(\Omega)$ and weakly* in $BV(\Omega)$.
 By lower semicontinuity of $TV_K$ (Proposition \ref{PLS}), we conclude that
\[
	(TV_K + \mathcal{E})(u) \leq \varliminf_{k\to\infty}(TV_K + \mathcal{E})(u_k).
\]
Thus, $u$ is a minimizer of $TV_K+\mathcal{E}$ in $BV(\Omega)$.
\end{proof}

%%%%%%%%%%%%%%

% 原稿 2024/4/15、1/6
\section{Coincidence set of a minimizer} \label{SCoi} % Section 3

In this section, we discuss one-dimensional setting and study properties of coincidence set
\[
	C = \left\{ x \in \Omega \bigm|
	U(x) = g(x) \right\}
\]
of a minimizer $U$.

Let $\Omega$ be a bounded open interval, i.e., $\Omega=(a,b)$.
 We consider $TV_{Kg}(u)$ for $g\in C[a,b]$ for $u\in BV(a,b)$.
 Since $u$ can be written as a difference of two non-decreasing function, $J_u$ is at most a countable set and outside $J_u$, $u$ is continuous.
 Moreover, we may assume that $u$ is right (resp.\ left) continuous at $a$ (resp.\ at $b$).
 For $x\in J_u$, $u(x\pm0)$ is well-defined.

For $K$, we assume (K1).
 We first prove that a minimizer $U$ is piecewise constant in the place $U>g$ or $U<g$.
\begin{lem} \label{LPic}
Assume that $K$ satisfies (K1) and that $g\in C[a,b]$.
 Let $U\in BV(a,b)$ be a minimizer of $TV_{Kg}$.
 Let $x_0\in(a,b)$ be a continuous point of $U$ and assume $U(x_0)>g(x_0)$ (resp.\ $U(x_0)<g(x_0)$).
 Then $U$ is constant in some interval $(\alpha,\beta)$ including $x_0$ and $U(x)>g(x)$ (resp.\ $U(x)<g(x)$) for $x\in(\alpha,\beta)$.
 Moreover, we can take $(\alpha,\beta)$ such that 
% 原稿 2024/4/15、2/6
one of following three cases occurs exclusively.
\begin{enumerate}
\item[(i)] $U(\alpha-0)<g(\alpha)<U(\alpha+0)$ (resp.\ $U(\alpha-0)>g(\alpha)>U(\alpha+0)$) and $U(\beta-0)=g(\beta)$,
\item[(i\hspace{-1pt}i)] $U(\alpha+0)=g(\alpha)$ and $U(\beta-0)>g(\beta)>U(\beta+0)$ (resp.\ $U(\beta-0)<g(\beta)<U(\beta+0)$),
\item[(i\hspace{-1pt}i\hspace{-1pt}i)] $U(\alpha+0)=g(\alpha)$ and $U(\beta-0)=g(\beta)$.
\end{enumerate}
In the case $\alpha=a$, we do not impose the condition $g(\alpha)>U(\alpha-0)$ (resp.\ $g(\alpha)<U(\alpha-0)$) and similarly, $g(\beta)>U(\beta+0)$ (resp.\ $g(\beta)<U(\beta+0)$) is not imposed for $\beta=b$ since $U(\alpha-0)$, $U(\beta+0)$ are undefined.
% The properties (i), (i\hspace{-1pt}i), (i\hspace{-1pt}i\hspace{-1pt}i) with opposite inequality hold if $U(x_0)<g(x_0)$.
\end{lem}
\begin{proof}
We shall only give a proof for the case $U(x_0)>g(x_0)$ since the argument for $U(x_0)<g(x_0)$ is symmetric.
 We consider the case that $U$ is continuous on $(\alpha,\beta)$ with $U>g$ on $(\alpha,\beta)$ and $U(\alpha+0)=g(\alpha)$, $U(\beta-0)=g(\beta)$.
 We shall claim that $U$ is a constant.
 If not, $\max_{[\alpha,\beta]}U>\min_{[\alpha,\beta]}U$.
 There would exist two points $x_1$ and $x_2$ in $[\alpha,\beta]$ such that $U(x_1)=\min_{[\alpha,\beta]}U$ and $U(x_2)=\max_{[\alpha,\beta]}U$ such that $U(x)\in(\min U,\max U)$ for $x$ between $x_1$ and $x_2$.
 We may assume $x_1<x_2$ with $U(x_1)<U(x_2)$ since the proof for the other case is symmetric.
 We first observe that $U$ must be a non-decreasing function on $[x_1,x_2]$.
 If not, there is either a local minimum or a local maximum of $U$ in $(x_1, x_2)$.
 In both cases, the argument is symmetric, we only discuss the case when there is a local maximum.
 In this case, there is $\gamma\in(x_1,x_2)$ and an open interval $I_\gamma=(y_1,y_2)\subset(x_1,x_2)$ containing $\gamma$ such that $\max_{\bar{I}_\gamma}U=U(\gamma)$ and $\max_{\partial I_\gamma}U<U(\gamma)$ with $U(y_1)=U(y_2)$.
 We set
\[
	v(x) := \left\{
\begin{array}{l}
	\max \left\{ U(y_1), U(x) - \varepsilon \right\}, \quad x \in I_\gamma \\
	U(x), \quad x \notin I_\gamma,
\end{array}
\right.
\]
where $\varepsilon$ is taken so that $U(x)-\varepsilon>\max\left(\max_{\partial I_\gamma}g, g(x)\right)$ for $x\in I_\gamma$.
 Since, $U>v>g$ on $I_\gamma$, we see that $\mathcal{F}(v)<\mathcal{F}(U)$.
 Evidently, $TV_K(v)=TV(v)\leq TV(U)=TV_K(U)$.
 This would contradict to the assumption that $U$ is a minimizer of $TV_{Kg}$.
 Hence, $U$ is monotone in $[x_1,x_2]$ satisfying $U>g$.
 Suppose that $U$ were not a constant function, there would exist another monotone continuous function $\tilde{U}$ ($\le U$) on $[x_1,x_2]$ such that $\tilde{U}(x_1)=U(x_1)$, $\tilde{U}(x_2)=U(x_2)$ keeping the property that $\tilde{U}>g$ on $(x_1,x_2)$.
 We set
\[
	v(x) = \left\{
\begin{array}{ll}
	\tilde{U}(x), & x \in (x_1, x_2), \\
	U(x), & x \notin (x_1, x_2).
\end{array}
\right.
\]
Since $U>v>g$ on $(x_1,x_2)$, we see that $\mathcal{F}(v)<\mathcal{F}(U)$.
 Clearly, $TV_K(v)=TV_K(U)$ so $U$ would not be a minimizer,
 This is a contradiction so we conclude that $U$ is a constant funtion on $[x_1,x_2]$ and (i\hspace{-1pt}i\hspace{-1pt}i) occurs.
 The proof shows that $U$ is constant in any interval $[x_1,x_2]$ provided that $U$ is continuous on $[x_1,x_2]$ and $U>g$ on $(x_1,x_2)$.

% 原稿 2024/4/15、3/6
Assume that there is a jump point $\alpha_0$ of $U$ with $\alpha_0<x_0$ with $U(\alpha_0+0)>g(\alpha_0)$.
 Then $U(\alpha_0+0)>U(\alpha_0-0)$.
 If not, $d=U(\alpha_0-0)-U(\alpha_0+0)>0$ and set
\[
	v(x) = \left\{
\begin{array}{lll}
	U(x) - d, & x \in (\alpha_0-\delta, \alpha_0) \\
	U(x), & x \notin (\alpha_0-\delta, \alpha_0).
\end{array}
\right.
\]
For a sufficiently small $\delta>0$, $v(x)>g(x)$; see Figure \ref{Fshi}.
\begin{figure}[tb]
\centering 
\includegraphics[keepaspectratio, scale=0.25]{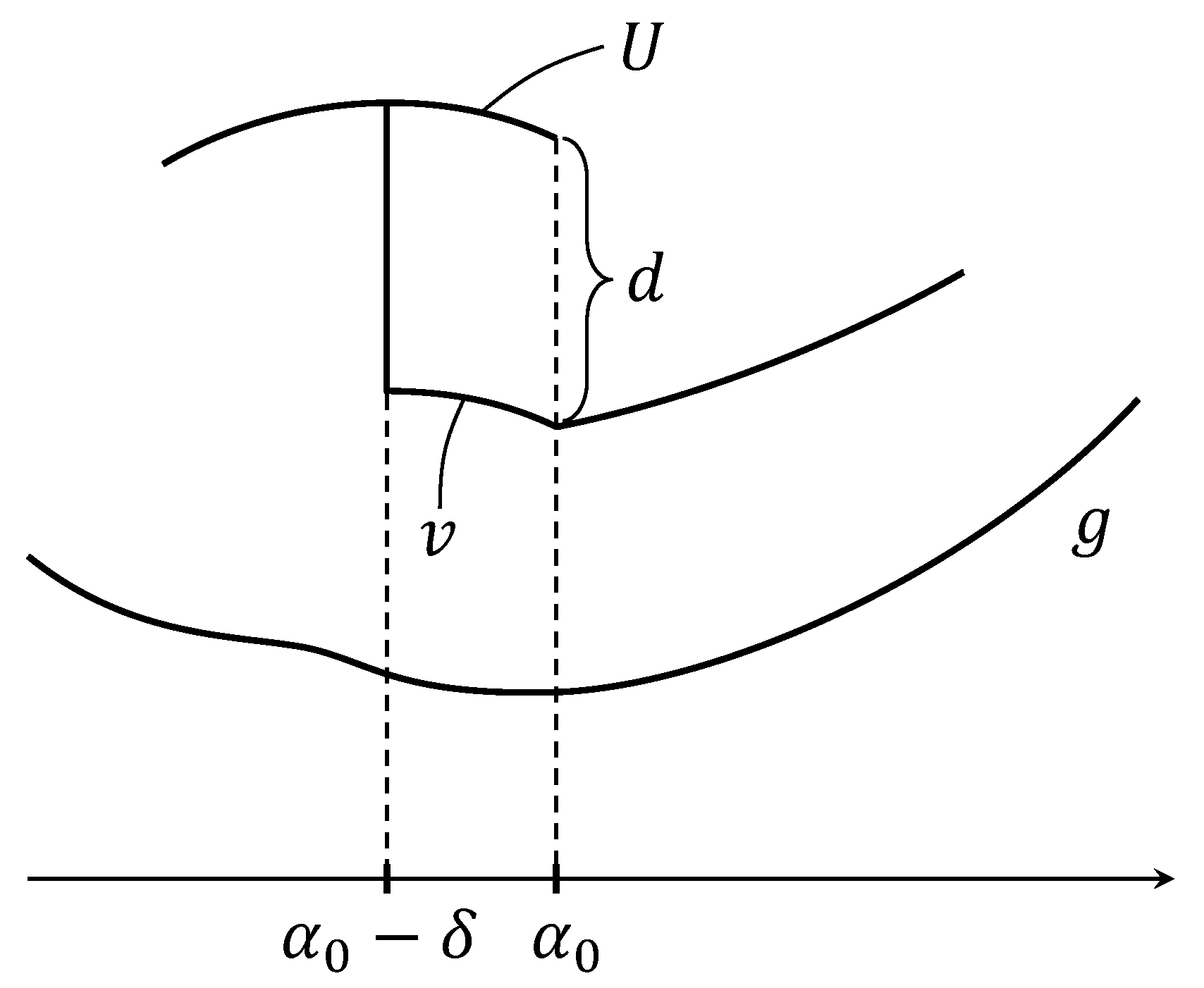}
\caption{shift of a jump\label{Fshi}}
\end{figure}
By definition, $TV_K(v)\leq TV_K(U)$ and $\mathcal{F}(v)<\mathcal{F}(U)$.
 Thus $U$ is not a minimizer of $TV_{Kg}$.
 Similarly, if there is a jump point $\beta_0$ of $U$ with $x_0<\beta_0$, then $U(\beta_0-0)>U(\beta_0+0)$.

We shall prove that $U$ is continuous on $(x_0,\beta)$ provided that $U>g$ on $(\alpha_0,\beta)$.
 If not, there would exist a jump point $\beta_0$ of $U$ with $\alpha_0<x_0<\beta_0<\beta$ satisfying $U(x)>g(x)$ for $x\in(\alpha_0,\beta_0)$, $U(\alpha_0+0)>g(\alpha_0)$, $U(\beta_0-0)>g(\beta_0)$.
 We shall prove that such configuration does not occur.
 As we see in the previous paragraph, $U$ must be constant on $(\alpha_0,\beta_0)$ so that $U(\alpha_0+0)=U(\beta_0-0)$.
 We replace the value if $U$ a constant $M$ smaller than $U(\alpha_0+0)=U(\beta_0-0)$ but close to its value so that the new function $U_M$ still has a property that $U_M>g$.
 Since $U(\alpha_0-0)<U(\alpha_0+0)$, $U(\beta_0+0)<U(\beta_0-0)$, clearly $TV_K(U_M)\le TV_K(U)$ because of (K1).
 Moreover since $U\ge U_M$ and $U>U_M$ on $(\alpha_0,\beta_0)$ and $U_M>g$, $\mathcal{F}(U_M)<\mathcal{F}(U)$.
 Thus, $U$ is not a minimizer.
 Thus we observe that $\beta_0=\beta$ and we conclude that $U$ is a constant on $[x_0,\beta]$.
%\begin{figure}[tb]
%\centering 
%\includegraphics[keepaspectratio, scale=0.25]{}
%\caption{a way of truncation\label{Ftrun}}
%\end{figure}

Since there is a sequence of continuity point $x_j$ of $U$ converging to $\alpha_0$ as $j\to\infty$ keeping $x_j>\alpha_0$, we conclude that $U$ is a constant on $(x_j,\beta)$.
 This says that $U$ is constant on $(\alpha_0,\beta)$.
 To say that this corresponds to (i), it remains to prove that $U(\alpha-0)<g(\alpha)$.
 If not, $U(\alpha-0)\geq g(\alpha)$, then we take
\[
	v(x) = \left\{
\begin{array}{ll}
	\max \left( U(x) - d, \tilde{g}(x) \right), & x \in (\alpha_0, \alpha_0+\delta), \\
	U(x), & x \notin (\alpha_0, \alpha_0+\delta), \\
	U(x-0), & x = \alpha_0,
\end{array}
\right.
\]
where $d=U(\alpha_0+0)-U (\alpha_0-0)$.
 Here,
\[
	\tilde{g}(x) = \sup \left\{ g(y) \bigm|
	\alpha_0 \le y \le x \right\}
\]
which is continuous and non-decreasing.
 Moreover,
\[
	TV_K(v) \leq TV_K(U) + m\left(\tilde{g}(\delta)\right), \quad
	\mathcal{F}(v) \leq \mathcal{F}(U)
	- \delta \left(d-\tilde{g}(\delta)\right)^2,
\]
where $m(\sigma)=\sup\left\{ K(d+\tau)-K(d) \bigm| 0\leq\tau\leq\sigma\right\}$; see Figure \ref{Fmod}.
\begin{figure}[tb]
\centering 
\includegraphics[keepaspectratio, scale=0.25]{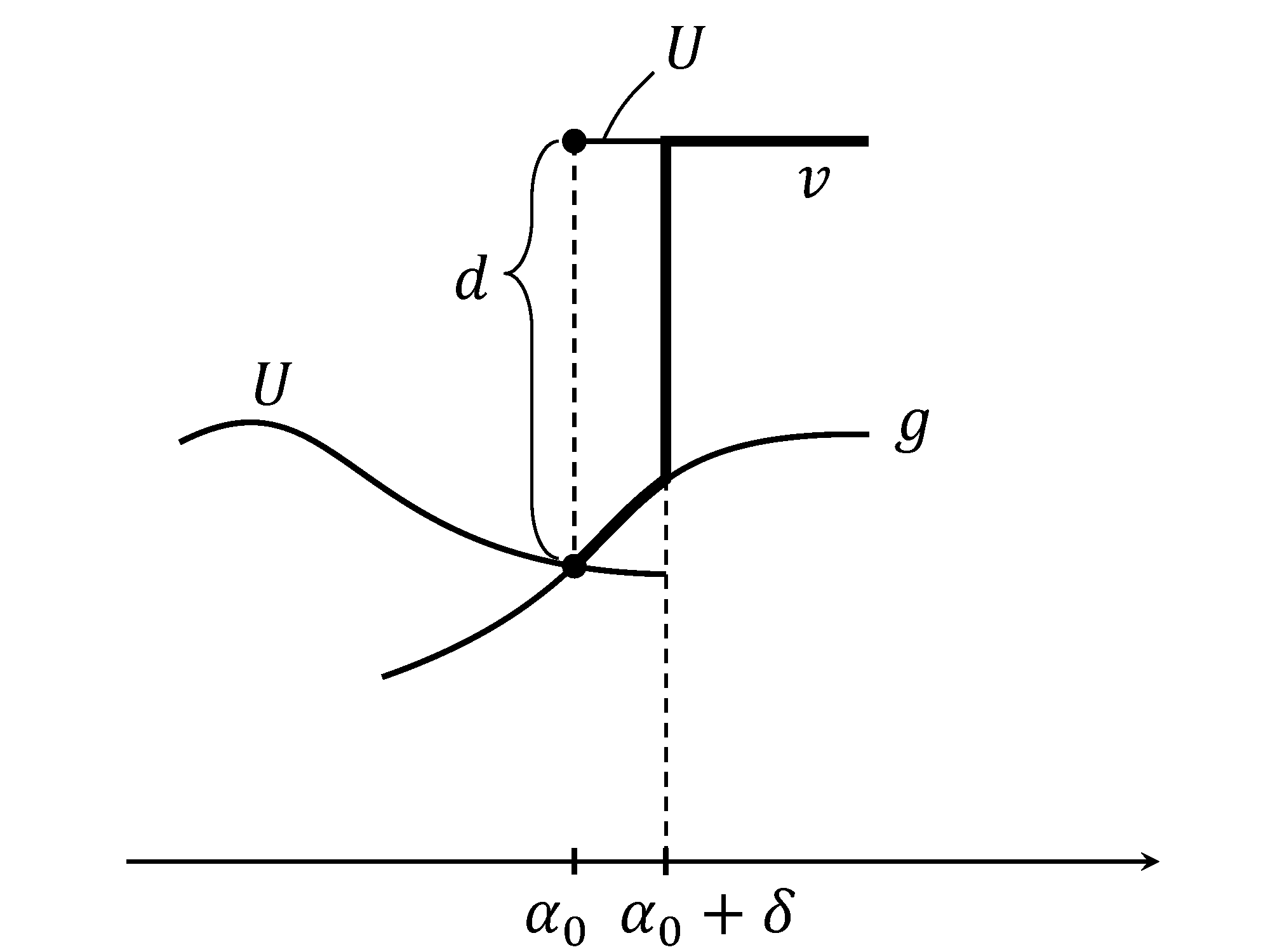}
\caption{modification of $U$\label{Fmod}}
\end{figure}
Thus
\[
	TV_{Kg}(v) \leq TV_{Kg}(U) + m\left(\tilde{g}(\delta)\right) - \delta\left(d-\tilde{g}(\delta)\right)^2.
\]
Since $\tilde{g}(\delta)\to0$ as $\delta\to0$, and $m(\sigma)\to0$ as $\sigma\to0$ by (K1), we conclude that for sufficiently small $\delta>0$, $TV_{Kg}(v)<TV_{Kg}(U)$.
 We now obtain (i).
 A symmetric argument yields (i\hspace{-1pt}i).
 The proof is now complete.
\end{proof}
% 原稿 2024/4/15、5/6
For $u\in BV(a,b)$ and $g\in C[a,b]$, we set
\[
	C_\pm = \left\{ x \in [a, b] \bigm|
	u(x \pm 0) = g(x) \right\}.
\]
If $C_+\cap(a,b)=C_-\cap(a,b)$, we simply write 
\[
C=C_+\cup C_-
\]
and call $C$ the \emph{coincidence set} of $u$.
 By Lemma \ref{LPic}, we obtain a few properties of $C$.
\begin{lem} \label{LCoi}
Assume the same hypotheses of Lemma \ref{LPic} concerning $K$, $g$ and $U$.
 Let $C_\pm$ be defined for $u=U$.
 Then $C_-\cap(a,b)=C_+\cap(a,b)$, i.e., $U$ is continuous on the coincidence set $C$.
 If $(x_1,x_2)\cap C=\emptyset$ with $x_1,x_2\in C$, then $U$ is piecewise constant on $(x_1,x_2)$ with at most one jump.
\end{lem}
\begin{proof}
By Lemma \ref{LPic}, we easily see that $J_U \cap C_\pm=\emptyset$. % By とカンマの重複を削除
 Thus, $U$ is continuous on $C$.

It remains to prove the second statement.
 By Lemma \ref{LPic}, the value of $U$ on $(x_1,x_2)$ is either $g(x_1)$ or $g(x_2)$.
 Moreover, if there are more than two jumps, $U$ must take value of $g$ at some point $x_*\in(x_1,x_2)$.
 In other words, $x_*\in C$.
 This contradicts to $C\cap(x_1,x_2)=\emptyset$.
\end{proof}
% 原稿 2024/4/17　1/4
We conclude this section by showing that if two points $\alpha,\beta\in C$ with $\alpha<\beta$ for a minimizer $U$ is too close, then $U$ must be monotone in $(\alpha,\beta)$ under the assumption that $K$ satisfies (K1), (K2w) and (K3).

We first note comparison with usual total variation $TV$ and $TV_K$.
\begin{lem} \label{LCpTK}
Assume that $K$ satisfies (K1), (K2w) and (K3).
 If $u\in BV(\Omega)$ with $\Omega=(a,b)$ is continuous at $a$ and $b$, then
\[
	TV_K(u) \geq K(\rho) \quad\text{with}\quad
	\rho = \left\lvert u(b)-u(a) \right\rvert.
\]
\end{lem}
\begin{proof}
We may assume that $u(a)<u(b)$.
 By definition,
\[
	TV_K(u) = \int_{\Omega\backslash J_u} |Du|
	+ \sum_{x_i\in J_u} K(\rho_i)
\]
where $J_u$ denote the jump discontinuity of $u$ and $\rho_i=\left\lvert u(x_i+0)-u(x_i-0)\right\rvert$ for $x_i\in J_u$.
 We note that
\[
	\int_{\Omega\backslash J_u} |Du|
	\geq \left( \rho - \sum_{x_i\in J_u^+} \rho_i \right)_+,
\]
where $J_u^+=\left\{x\in J_u\bigm|u(x+0)-u(x-0)>0\right\}$.
 By subadditivity (K2w) and lower semicontinuity (K1), we see that
\[
	\sum_{x_i\in J_u^+} K(\rho_i)
	\geq K \left( \sum_{x_i\in J_u^+} \rho_i \right).
\]
Indeed,
\[
	K \left( \sum_{i=1}^\infty \rho_i \right)
	\le \varliminf_{m\to\infty} K \left( \sum_{i=1}^m \rho_i \right)
	\le \varliminf_{m\to\infty} \sum_{i=1}^m K (\rho_i) = \sum_{i=1}^\infty K(\rho_i).
\]

By (K2w), we have
\[
	K(\rho) \leq 2K(\rho/2) \leq \cdots
	\leq 2^m K(\rho/2^m) = \left(K(q)/q\right)\rho
\]
for $q=\rho/2^m$.
 Sending $m\to\infty$, we obtain by (K3) that
\[
	K(\rho) \leq \rho.
\]
% 原稿 2024/4/17　2/4
We thus observe that
\begin{align*}
	TV_K(u) &\geq \int_{\Omega\backslash J_u} |Du|
	+ \sum_{x_i\in J_u^+} K(\rho_i)
	\geq \left( \rho - \sum_{x_i\in J_u^+} \rho_i \right)_+
	+ K \left( \sum_{x_i\in J_u^+} \rho_i \right) \\
	&\geq K \left(\left( \rho - \sum_{x_i\in J_u^+} \rho_i \right)_+\right) + K \left( \sum_{x_i\in J_u^+} \rho_i \right) \geq K(\rho).
\end{align*}
The last inequality follows from the subadditivity (K2w) and monotonicity of $K$ in (K1). 
\end{proof}

If we assume (K1) and (K3), we have, (K3w) by \eqref{EKEB}.
%\begin{equation} \label{ELES}
%	K(\rho) > c_M \rho \quad\text{for}\quad 0<\rho\leq M,
%\end{equation}
%with some positive constant $c_M$ depending only on $M$.
 We next give our monotonicity result.
\begin{thm} \label{TMon}
Assume that $K$ satisfies (K1), (K2w) and (K3), and that $g\in C[a,b]$.
 Let $U$ be a minimizer of $TV_{Kg}$ in $BV(a,b)$.
 Let $\alpha$ and $\beta$ with $\alpha<\beta$ be in $C$, where $C$ denotes the coincidence set.
 If $U(\alpha)\leq U(\beta)$ (resp.\ $U(\alpha)\geq U(\beta)$), then $U$ is non-decreasing (non-increasing) provided that $\beta-\alpha\le c_M/(\lambda M)$ if $\operatorname{osc}g=\max_{[\alpha,\beta]}g-\min_{[\alpha,\beta]}g\leq M$, where $c_M$ is a constant (K3w).
\end{thm}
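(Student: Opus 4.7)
The plan is a proof by contradiction. Assume without loss of generality $U(\alpha)\leq U(\beta)$ (the reverse case being symmetric) and suppose $U$ is not non-decreasing on $(\alpha,\beta)$. I construct a piecewise-constant perturbation $v$ that agrees with $U$ outside a subinterval of $[\alpha,\beta]$ and satisfies $TV_{Kg}(v)<TV_{Kg}(U)$ whenever $\beta-\alpha<c_M/(\lambda M)$, yielding the required contradiction.

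\emph{Locating a dip or bump.} By Lemma~\ref{LCoi}, $U\equiv g$ on $C$ and $U$ is piecewise constant with at most one jump between consecutive $C$-points, taking only the two $g$-values at the adjacent $C$-points. A direct check shows that $U$ is non-decreasing on $[\alpha,\beta]$ if and only if $g|_{C\cap[\alpha,\beta]}$ is non-decreasing. Hence I may pick $p<q<r$ in $C\cap[\alpha,\beta]$, with no other $C$-point in $(p,q)\cup(q,r)$, exhibiting either a \emph{dip} ($g(p),g(r)>g(q)$) or a \emph{bump} ($g(p),g(r)<g(q)$). I treat the dip; the bump is entirely analogous.

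\emph{Competitor and $TV_K$-saving.} Set $\rho_1:=g(p)-g(q)$, $\rho_2:=g(r)-g(q)$, and without loss of generality $\rho_1\leq\rho_2$. By Lemma~\ref{LCoi} there are $x_1\in(p,q)$ and $x_2\in(q,r)$ with $U\equiv g(p)$ on $(p,x_1)$, $U\equiv g(q)$ on $(x_1,x_2)$, and $U\equiv g(r)$ on $(x_2,r)$. Define
\[
v(x):=\begin{cases} g(p), & x\in(p,x_2),\\ g(r), & x\in(x_2,r),\\ U(x), & \text{otherwise.}\end{cases}
\]
Then $v$ has a single jump of size $\rho_2-\rho_1$ at $x_2$, so $TV_K(U)|_{(p,r)}=K(\rho_1)+K(\rho_2)$ while $TV_K(v)|_{(p,r)}=K(\rho_2-\rho_1)$. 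Using monotonicity of $K$ from (K1) and \eqref{ELES} (noting $\rho_1\leq\operatorname{osc}_{[\alpha,\beta]}g\leq M$),
\[
TV_K(U)-TV_K(v)=K(\rho_1)+\bigl[K(\rho_2)-K(\rho_2-\rho_1)\bigr]\geq K(\rho_1)\geq c_M\rho_1.
\]

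\emph{Fidelity bound and conclusion.} Since $v-U=\rho_1\,\chi_{(x_1,x_2)}$, expanding $(v-g)^2-(U-g)^2=(v-U)(v+U-2g)$ gives
\[
\mathcal{F}(v)-\mathcal{F}(U)=\frac{\lambda\rho_1}{2}\int_{x_1}^{x_2}\bigl(g(p)+g(q)-2g(x)\bigr)\,dx\leq\lambda M\rho_1(\beta-\alpha),
\]
using $|g(p)+g(q)-2g(x)|\leq 2\operatorname{osc}_{[\alpha,\beta]}g\leq 2M$. Minimality of $U$ forces $\mathcal{F}(v)-\mathcal{F}(U)\geq TV_K(U)-TV_K(v)\geq c_M\rho_1$, hence $\lambda M(\beta-\alpha)\geq c_M$, contradicting $\beta-\alpha<c_M/(\lambda M)$.

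\emph{Expected obstacle.} The delicate point is locating a genuinely consecutive triple $(p,q,r)$ when $C$ has accumulation points or contains a nontrivial subinterval of $[\alpha,\beta]$, since strictly consecutive $C$-points need not exist there. In that situation one selects a short subarc on which $g|_C=g$ fails to be non-decreasing and runs a variant of the perturbation inside it; the direct two-jump identity for $TV_K(U)$ above is then replaced by the lower bound $K(\rho_1)+K(\rho_2)$ coming from Lemma~\ref{LCpTK}, which is where subadditivity (K2w) genuinely enters. The fidelity estimate carries over verbatim.
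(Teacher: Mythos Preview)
Your argument works cleanly in the ``consecutive triple'' case, but the step ``Hence I may pick $p<q<r$ in $C\cap[\alpha,\beta]$, with no other $C$-point in $(p,q)\cup(q,r)$'' is not justified, and in fact need not be possible. The complement $[\alpha,\beta]\setminus C$ is a disjoint union of open gaps, and the failure of monotonicity only tells you that either some gap $(p,r)$ has $g(p)>g(r)$, or $g$ itself fails to be monotone on an interval contained in $C$. In the first situation the gap $(p,r)$ may be isolated (e.g.\ $C\supset[\alpha,p]\cup[r,\beta]$ with $g$ non-decreasing on each piece but $g(p)>g(r)$), so no adjacent gap exists and your competitor cannot be built. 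In the second situation there are no gaps at all in the offending region, so again no consecutive triple. Your final ``expected obstacle'' paragraph correctly flags this, but the sketch there (``a variant of the perturbation'') does not specify a competitor, so the proof is incomplete precisely at the point that matters.

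The paper avoids this difficulty by using a \emph{truncation} competitor rather than a local flattening. Assuming (after symmetry) that some $x_0\in C\cap(\alpha,\beta)$ has $U(x_0)>U(\beta)$, one relocates $x_0$ to a point of $C$ where $U$ attains its maximum on $[\alpha,\beta]$, then picks $x_1\in C\cap(x_0,\beta]$ where $U$ attains its minimum on $[x_0,\beta]$, and sets $v=\min\bigl(U,U(x_1)\bigr)$ on $[\alpha,x_1]$, $v=U$ elsewhere. This competitor is well-defined regardless of the fine structure of $C$. Since truncation never increases $TV_K$, one has $TV_K(U)-TV_K(v)\ge TV_K\bigl(U,(x_0,x_1)\bigr)$, and Lemma~\ref{LCpTK} (this is where (K2w) enters) gives $TV_K\bigl(U,(x_0,x_1)\bigr)\ge K(\rho)>c_M\rho$ with $\rho=U(x_0)-U(x_1)$. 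The fidelity bound is then the same crude estimate you use, $\bigl|\mathcal F(v)-\mathcal F(U)\bigr|\le\lambda M\rho(\beta-\alpha)$, and the contradiction follows. Once $U(\alpha)\le U(x_0)\le U(\beta)$ is established for every $x_0\in C\cap(\alpha,\beta)$, Lemma~\ref{LCoi} immediately gives monotonicity on each gap and hence on all of $[\alpha,\beta]$.
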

\begin{proof}
We first note that $U$ is continuous at $\alpha,\beta$ by Lemma \ref{LCoi}.
 Moreover, if there is no point of $C$ in $(\alpha,\beta)$, by Lemma \ref{LCoi}, $U$ is piecewise constant with at most one jump in $(\alpha,\beta)$.
 Thus, $U$ is monotone.

We next consider the case that $C\cap(\alpha,\beta)\neq\emptyset$.
 We may assume that $U(\alpha)\leq U(\beta)$ since the proof for 
% 原稿 2024/4/17　3/4
the other case is symmetric.
 We shall prove that
\[
	U(\alpha) \leq U(x_0) \leq U(\beta)
\]
for any $x_0\in C\cap(\alpha,\beta)$.
 Suppose that there were a point $x'_0 \in C\cap(\alpha,\beta)$ such that $U(\alpha)>U(x'_0)$ or $U(\beta)<U(x'_0)$.
 We may assume that $U(x'_0)>U(\beta)$ since the proof for the other case is parallel.
 By Lemma \ref{LPic} and Lemma \ref{LCoi}, the values of $U$ are attained in a coincidence set.
 Thus, there exists $x_0\in C\cap(\alpha,\beta)$ such that
\[
	U(x_0) = \max_{[\alpha,\beta]} U.
\]
We take $x_1\in C\cap(x_0,\beta]$ such that
\[
	U(x_1) = \min_{[x_0,\beta]} U;
\]
see Figure \ref{FChoi}.
\begin{figure}[tb]
\centering 
\includegraphics[keepaspectratio, scale=0.25]{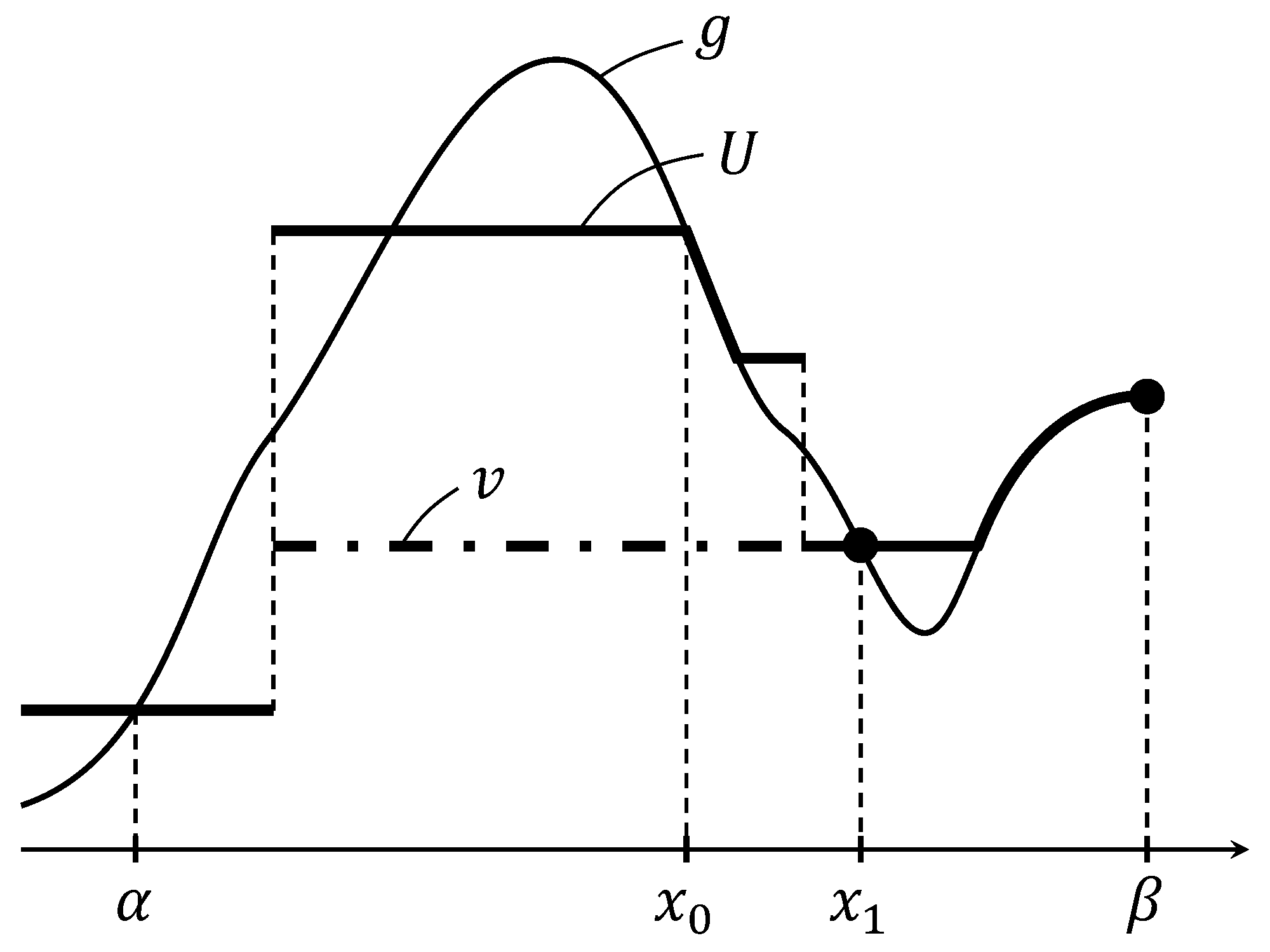}
\caption{\label{FChoi}}
\end{figure}
We now define a truncated function 
\[
	v(x) = \left\{
\begin{array}{ll}
	\min \left( U(x), U(x_1) \right), & x \in [\alpha, x_1] \\
	U(x), & x \in (x_1, \beta].
\end{array}
\right.
\]

We shall prove that $TV_{Kg}(v)<TV_{Kg}(U)$ if $\rho=U(x_0)-U(x_1)>0$.
 Since $U$ and $v$ are continuous at $x=x_0$ and $x_1$,
\begin{gather*}
	TV_K \left(U,(\alpha,\beta)\right) = TV_K \left(U,(\alpha,x_0) \right) 
	+ TV_K \left(U,(x_0,x_1) \right)
	+ TV_K \left(U,(x_1,\beta) \right), \\
	TV_K \left(v,(\alpha,\beta)\right)
	= TV_K \left(v,(\alpha,x_0) \right) 
	+ TV_K \left(v,(x_0,x_1) \right)
	+ TV_K \left(v,(x_1,\beta) \right).
\end{gather*}
Since $v=U$ near $\alpha$ and $\beta$ and also outside $(\alpha,\beta)$, the values $TV_K$ of $U$ and $u$ outside $(\alpha,\beta)$ are the same.
 Since $TV_K$ does not increase by truncation, we see
\[
	TV_K \left(U,(\alpha,x_0) \right) 
	\geq TV_K \left(v,(\alpha,x_0) \right).
\]
Since $v=U$ on $(x_1,\beta)$, we proceed
% 原稿 2024/4/17　4/4
\begin{equation*}
	TV_K(U) - TV_K(v) 
	\geq TV_K \left(U,(x_0,x_1) \right)
	- TV_K \left(v,(x_0, x_1) \right)
	= TV_K \left(U,(x_0,x_1) \right).
\end{equation*}
By Lemma \ref{LCpTK}, we observe that
\[
	TV_K \left(U,(x_0,x_1) \right) 
	\geq K(\rho) \quad\text{with}\quad
	\rho = U(x_0) - U(x_1).
\]
Thus by (K3w), we now obtain
\[
	TV_K (U) - TV_K(v) > c_M\rho.
\]
We next compare the values $\mathcal{F}(v)$ and $\mathcal{F}(U)$.
 By definition, we observe
\begin{align*}
	\frac2\lambda \left(\mathcal{F}(v) - \mathcal{F}(U)\right)
	&= \int_\alpha^{x_1} (g-v)^2\; dx - \int_\alpha^{x_1} (g-U)^2\; dx \\
	& \leq \int_\alpha^{x_1} \rho\left( |g-v|+|g-U|\right) dx
    \leq (\beta-\alpha) \rho 2M.
\end{align*}
The last inequality follows from the fact that the value of $U$ must be between $\min_{[\alpha,\beta]}g$ and $\max_{[\alpha,\beta]}g$.
 We thus observe that
\begin{equation*}
    TV_{Kg} (U) - TV_{Kg}(v)
    > c_M\rho - \lambda(\beta-\alpha)\rho M
	=\left( c_M - \lambda(\beta-\alpha)M \right)\rho.
\end{equation*}
If $\beta-\alpha$ satisfies $\beta-\alpha\leq c_M/(\lambda M)$, $U$ is not a minimizer provided that $\rho>0$, i.e., $U(x_0)>U(x_1)$.
 Thus, we conclude that $U(x_0)\leq U(\beta)$.

So far we have proved that $U$ is non-decreasing on $C\cap[\alpha,\beta]$.
 By Lemma \ref{LCoi}, we conclude that $U$ itself is non-decreasing in $[\alpha,\beta]$.
\end{proof}
\begin{remark} \label{RK3w}
	In Theorem \ref{TMon}, if we assume that $U$ is a piecewise constant function (possibly with infinitely many jumps), we may assume (K3w) instead of (K3) since the conclusion of Lemma \ref{LCpTK} holds for piecewise constant functions without assuming (K3).
\end{remark}

\section{Minimizers for general one-dimensional data} \label{SGe} % Section 5

In this section, we shall prove that a minimizer $U$ is piecewise constant if $K$ satisfies (K1), (K3) and (K2) instead of (K2w).
 In other words, we shall prove our main result.

If we assume (K2), then merging jumps decrease the value $TV_K$.
 However, $\mathcal{F}$ may increase.
 We have to estimate an increase of $\mathcal{F}$.

%%%%%%%
\subsection{Bound for an increase of fidelity} \label{SEFid} % Section 5.1

We shall estimate an increase of fidelity $\mathcal{F}$.
 We begin with a simple setting.
 We set  for $\gamma\in(\alpha,\beta)$,
\[
	U_0^\gamma(x) = \left\{
\begin{array}{ll}
	g(\alpha), & x \in [\alpha,\gamma), \\
	g(\beta), & x \in [\gamma,\beta];
\end{array}
\right.
\] 
see Figure \ref{FUGamma}. 
\begin{figure}[tb]
\centering
\includegraphics[keepaspectratio, scale=0.25]{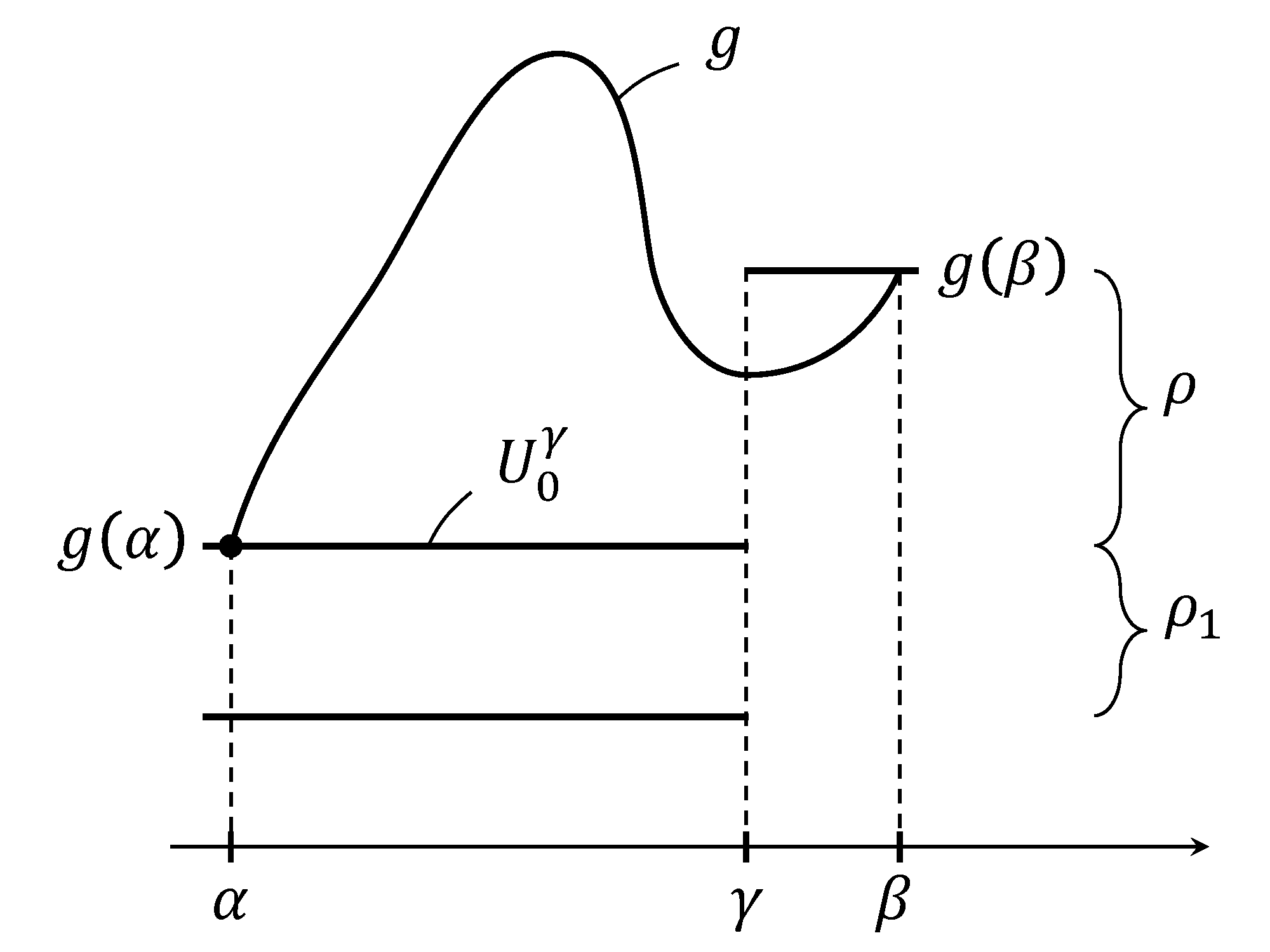}
\caption{profile of $U_0^\gamma$ and $g$\label{FUGamma}}
\end{figure}
The fidelity of $U_0^\gamma$ on $(\alpha,\beta)$ is denoted by $\lambda F(\gamma)/2$, i.e.,
\[
	F(\gamma) := 
	\int_\alpha^\beta \left| U_0^\gamma - g \right|^2 dx
\]
for $\gamma\in[\alpha,\beta]$.
 Since we do not assume that $g$ is non-decreasing, $g$ may be very large on $(\alpha,\gamma)$.
 Fortunately, we observe that $g$ cannot be too large on $(\alpha,\gamma)$ in the average if $F(\gamma)$ is smaller than $F(\alpha+0)$.
\begin{prop} \label{PCong}
Assume that $g\in C[\alpha,\beta]$ and that $U_0^\gamma(\alpha)=g(\alpha)$, $U_0^\gamma(\beta)=g(\beta)$ with $\rho= U_0^\gamma (\beta)- U_0^\gamma (\alpha)>0$.
 If $\gamma\in[\alpha,\beta]$ satisfies $F(\alpha+0)\geq F(\gamma)$, then
\[
	\int_\alpha^\gamma g(x)\; dx \leq \frac12 \left( U_0^\gamma(\alpha) + U_0^\gamma(\beta) \right) (\gamma - \alpha)
\]
or
\[
	\int_\alpha^\gamma \left( g(x) - U_0^\gamma(\alpha) \right) dx \leq \rho(\gamma - \alpha)/2.
\]
\end{prop}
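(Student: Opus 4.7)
The plan is to reduce the inequality to a short algebraic identity. Writing $A:=U_0^\gamma(\alpha)=g(\alpha)$ and $B:=U_0^\gamma(\beta)=g(\beta)$, so $\rho=B-A>0$, I would first split
\[
	F(\gamma) = \int_\alpha^\gamma (A-g)^2\, dx + \int_\gamma^\beta (B-g)^2\, dx
\]
from the definition of $U_0^\gamma$. The limit $F(\alpha+0)$ corresponds to $U_0^\gamma \equiv B$ on $(\alpha,\beta]$, so $F(\alpha+0)=\int_\alpha^\beta (B-g)^2\, dx$. Subtracting, the pieces on $(\gamma,\beta)$ cancel and I am left with
\[
	F(\alpha+0) - F(\gamma)
	= \int_\alpha^\gamma \bigl[ (B-g)^2 - (A-g)^2 \bigr]\, dx.
\]

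The key step is the factorization $(B-g)^2 - (A-g)^2 = (B-A)(A+B-2g) = \rho(A+B-2g)$, which turns the last display into
\[
	F(\alpha+0) - F(\gamma) = \rho \int_\alpha^\gamma \bigl( A + B - 2g(x) \bigr) dx.
\]
The hypothesis $F(\alpha+0)\geq F(\gamma)$ together with $\rho>0$ forces $\int_\alpha^\gamma (A+B-2g)\, dx \geq 0$, which rearranges to the first asserted inequality
\[
	\int_\alpha^\gamma g(x)\, dx \leq \tfrac{1}{2}(A+B)(\gamma-\alpha).
\]
Subtracting $A(\gamma-\alpha)=g(\alpha)(\gamma-\alpha)$ from both sides and using $B-A=\rho$ yields the equivalent second form $\int_\alpha^\gamma (g(x)-g(\alpha))\, dx \leq \rho(\gamma-\alpha)/2$.

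There is essentially no obstacle here: the content of the proposition is the squared-difference identity above, which lets the hypothesis on $F$ be read directly as an average bound on $g$ over $[\alpha,\gamma]$. The one point worth a sentence of care is the interpretation of $F(\alpha+0)$, which I would justify as the right-sided limit (monotone/pointwise convergence of $U_0^\gamma$ to $B$ on $(\alpha,\beta]$ as $\gamma\downarrow\alpha$, with dominated convergence of the integrand since $g$ and hence $U_0^\gamma - g$ is bounded on $[\alpha,\beta]$).
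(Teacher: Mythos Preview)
Your proof is correct and follows essentially the same route as the paper's: both compute $F(\alpha+0)-F(\gamma)$ as the integral over $(\alpha,\gamma)$ of the difference of squares, factor it as $\rho\bigl(g(\alpha)+g(\beta)-2g\bigr)$, and read off the average bound from $\rho>0$. Your write-up adds the explicit justification of $F(\alpha+0)$ and the derivation of the second form from the first, but the argument is the same.
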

% 原稿 2024/4/22　5-2/8
\begin{proof}
We observe that
\begin{align*}
	F(\alpha + 0) - F(\gamma)
	&= \int_\alpha^\gamma \left\{ \left( g(\beta) - g(x)\right)^2 - \left( g(\alpha) - g(x)\right)^2 \right\} dx \\
	&= -\int_\alpha^\gamma \rho \left\{ 2g - \left(g(\alpha) + g(\beta)\right) \right\} dx.
\end{align*}
Since $\rho>0$, $F(\alpha+0)-F(\gamma)\geq0$ implies that
\[
	\int_\alpha^\gamma 2g\; dx
	\leq \int_\alpha^\gamma \left( g(\alpha) + g(\beta)\right) dx
	= \left( U_0^\gamma(\alpha) + U_0^\gamma(\beta)\right)(\gamma-\alpha).
\]
\end{proof}

We give a simple application.
 See Figure \ref{FUGamma}.
\begin{lem} \label{LPert}
Assume the same hypotheses of Proposition \ref{PCong}.
 Then,  for $\rho_1>0$, 
\[
	\int_\alpha^\gamma \left( U_0^\gamma - \rho_1 - g \right)^2 dx
	-\int_\alpha^\gamma \left( U_0^\gamma - g \right)^2 dx
	\leq \rho_1(\rho_1 + \rho) (\gamma-\alpha).
\]
\end{lem}
% 原稿 2024/4/22　5-3/8
\begin{proof}
We may assume that $U_0^\gamma(\alpha)=0$ by adding a constant to both $U_0^\gamma$ amd $g$.
 The left-hand side equals
\[
	I = \int_\alpha^\gamma ( \rho_1 + g )^2\; dx
	-\int_\alpha^\gamma g^2\; dx
	= \rho_1 \int_\alpha^\gamma \{ \rho_1 + 2g\} \; dx.
\]
Since
\[
	2 \int_\alpha^\gamma g(x)\; dx
	\leq \rho(\gamma-\alpha)
\]
by Proposition \ref{PCong}, we end up with $I\leq(\rho_1^2+\rho_1\rho)(\gamma-\alpha)$.
\end{proof}
\begin{remark} \label{RCong}
In Proposition \ref{PCong} and Lemma \ref{LPert}, we do not assume that $g< U_0^\gamma (\beta)$ on $(\gamma,\beta)$ nor $g\geq U_0^\gamma (\alpha)$ on $(\alpha,\gamma)$.
\end{remark}
% 原稿 2024/4/22　5-4/8
We next consider behavior of $g$ between two points of the coincidence set where $U$ is a constant.
\begin{prop} \label{PFEC}
Assume that $K$ satisfies (K1), (K2w) and (K3).
 Assume that $g\in C[a,b]$.
 Let $U\in BV(a,b)$ be a minimizer of $TV_{Kg}$.
 Let $\alpha,\beta\in[a,b)$ be $\alpha<\beta$ and $\alpha,\beta\in C$.
 Assume that $U$ is non-decreasing and there is $\gamma\in[\alpha,\beta)\cap C$ such that $U(\gamma)=U(\alpha)=g(\alpha)$ and $U(x)>U(\gamma)$ for $x>\gamma$.
 Assume that there is $p_j\in U(C)$ such that $U(\gamma)<p_j<U(\beta)$, $p_j\downarrow U(\gamma)$ as $j\to\infty$.
 Then, $\int_\alpha^\gamma\left(g(x)-U(\alpha)\right)dx\leq0$.
\end{prop}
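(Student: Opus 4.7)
The plan is to argue by contradiction: set $I := \int_\alpha^\gamma (g(x)-U(\alpha))\,dx$ and assume $I > 0$ (the case $\gamma = \alpha$ being trivial). First I would record the structure: by monotonicity and $U(\alpha) = U(\gamma) = g(\alpha)$, $U$ is identically $g(\alpha)$ on $[\alpha,\gamma]$, and since $\gamma \in C$, Lemma~\ref{LCoi} guarantees that $U$ is continuous at $\gamma$. For each $p_j \in U(C)$ with $p_j \downarrow U(\gamma)$ and $p_j < U(\beta)$, pick $x_j \in C$ with $U(x_j) = p_j$; monotonicity together with $U \le U(\gamma) < p_j$ on $[a,\gamma]$ forces $x_j > \gamma$, while continuity of $U$ at $\gamma$ combined with the hypothesis $U(x) > U(\gamma)$ for $x > \gamma$ forces $x_j \downarrow \gamma$ as $j \to \infty$. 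The competitor is the ``push-up''
\[
    v_j(x) := \begin{cases} p_j, & x \in [\alpha, x_j], \\ U(x), & \text{otherwise.} \end{cases}
\]

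For the $TV_K$ comparison, $v_j$ is constant on $(\alpha, x_j)$ and agrees with $U$ outside, so the only new contribution is a jump at $\alpha$ (when $\alpha > a$) of height $\rho_j := p_j - U(\alpha)$ costing $K(\rho_j)$. Since $\alpha, x_j \in C$ are continuity points of $U$, Lemma~\ref{LCpTK} applied on $(\alpha,x_j)$ yields $TV_K(U,(\alpha,x_j)) \ge K(U(x_j) - U(\alpha)) = K(\rho_j)$. These contributions cancel exactly, giving $TV_K(v_j) - TV_K(U) \le 0$ (the $\alpha = a$ case is even better, with no boundary jump).

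The heart of the argument is the fidelity bound. On $[\alpha,\gamma]$, where $U \equiv g(\alpha)$, a direct expansion gives
\[
    \tfrac{2}{\lambda}\int_\alpha^\gamma\bigl[(p_j - g)^2 - (g(\alpha) - g)^2\bigr]dx = \rho_j^2(\gamma - \alpha) - 2\rho_j I.
\]
On $[\gamma, x_j]$ the integrand factors as $(p_j - U)(p_j + U - 2g)$; using $|p_j - U| \le \rho_j$ together with \eqref{EB0} and $p_j \in U(C) \subset [\inf g, \sup g]$ to bound $|p_j + U - 2g| \le 2M$ for $M := \operatorname{osc}_{[a,b]} g$, the corresponding integral is at most $2M\rho_j (x_j - \gamma)$. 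Summing,
\[
    \tfrac{2}{\lambda}\bigl(\mathcal{F}(v_j) - \mathcal{F}(U)\bigr) \le \rho_j\bigl[\rho_j(\gamma - \alpha) + 2M(x_j - \gamma) - 2I\bigr].
\]
Since $\rho_j \to 0$ and $x_j - \gamma \to 0$, the bracket tends to $-2I < 0$, so for $j$ large enough $\mathcal{F}(v_j) < \mathcal{F}(U)$; combined with the $TV_K$ bound this gives $TV_{Kg}(v_j) < TV_{Kg}(U)$, contradicting minimality of $U$.

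The main obstacle is the exact cancellation in the $TV_K$ comparison: flattening $U$ to $p_j$ on $[\alpha, x_j]$ pays $K(\rho_j)$ for the new jump at $\alpha$ and saves at most $K(\rho_j)$ from the lost variation on $(\alpha, x_j)$, with no slack. Hence the contradiction must be extracted purely from the fidelity, which is possible only because both $\rho_j$ and $x_j - \gamma$ can be driven to zero simultaneously while the linear-in-$\rho_j$ gain $-2I\rho_j$ retains its fixed negative slope --- this is precisely where the hypothesis that $p_j$ lies in the image $U(C)$ (so that matching coincidence points $x_j$ exist arbitrarily close to $\gamma$) is essential.
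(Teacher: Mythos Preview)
Your argument is correct and follows essentially the same route as the paper: both build a ``push-up'' competitor replacing $U$ by the constant $p_j$ on an interval ending at a preimage of $p_j$, verify via Lemma~\ref{LCpTK} that $TV_K$ does not increase, and then read off the integral inequality from the fidelity balance as $j\to\infty$. The differences are cosmetic---the paper places the new jump at an interior point $\alpha_j:=\alpha+(\gamma-\alpha)/j$ rather than at $\alpha$ itself, and argues directly (deducing $\mathcal F(v_j)\ge\mathcal F(U)$ from minimality and then passing to the limit) rather than by contradiction---but the substance is identical.
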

\begin{proof}
We may assume that $U(\alpha)=g(\alpha)=0$ so that $\lim_{j\to\infty}p_j=0$.
 We set 
\[
	v_j(x) = \left\{
\begin{array}{ll}
	\max \left(p_j,U(x)\right), & \alpha_j := \alpha + (\gamma-\alpha)/j< x \\
	U(x), & x \leq \alpha_j,
\end{array}
\right.
\]
 for $j\geq2$; see Figure \ref{FLFEC}. 
\begin{figure}[tb]
\centering
\includegraphics[keepaspectratio, scale=0.25]{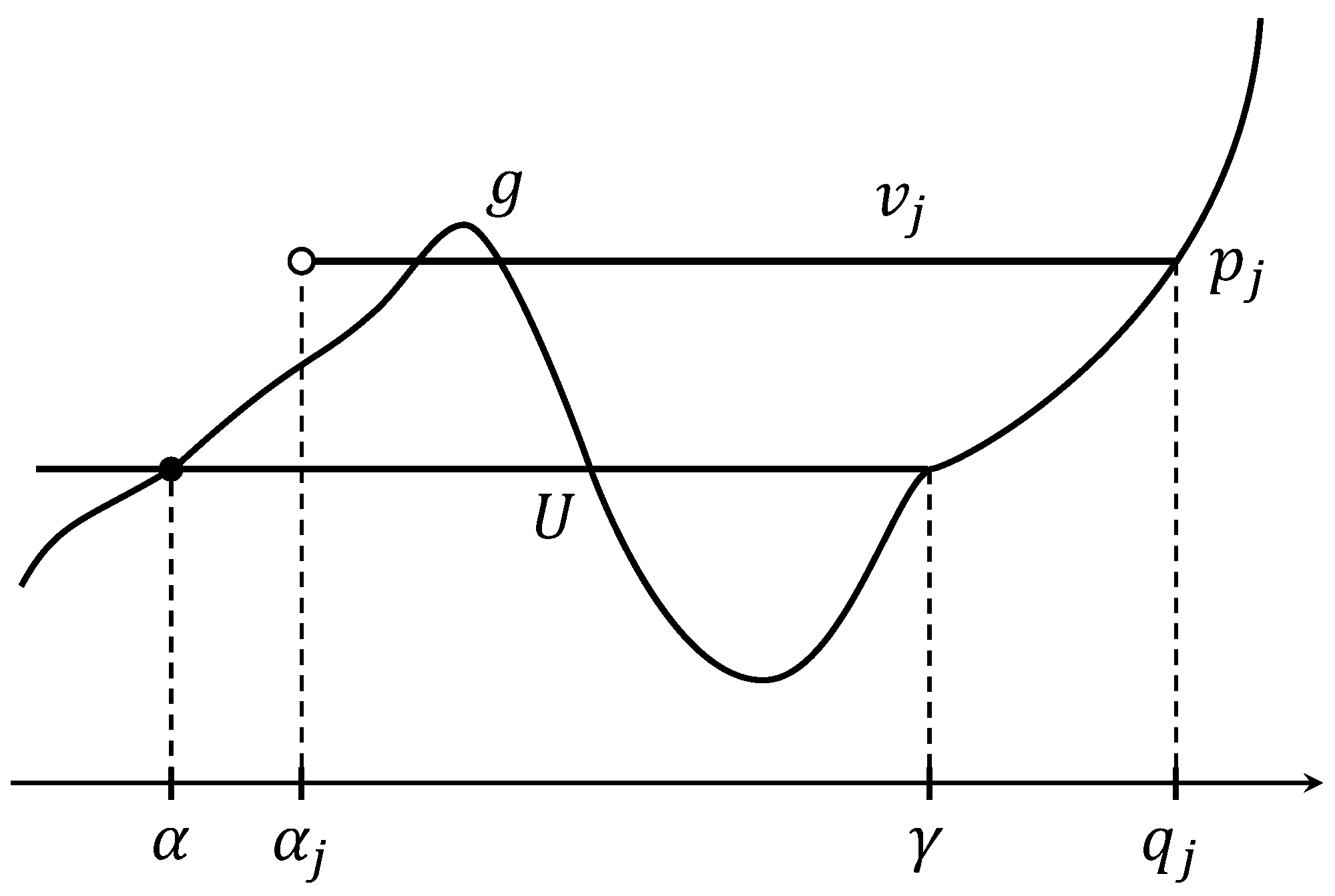}
\caption{$v_j$ and $U$\label{FLFEC}}
\end{figure}
Since $U$ is a minimizer, by definition,
\[
	TV_{Kg}(v_j) \geq TV_{Kg}(U).
\]
By Lemma \ref{LCpTK},
\begin{align*}
	TV_K \left(U,(\alpha,\beta)\right)
	& \geq TV_K \left(U,(\alpha,\gamma)\right)+ TV_K \left(U,(\gamma,q_j)\right) + TV_K \left(U,(q_j,\beta)\right) \\
	&\geq 0 + K(p_j) + TV_K \left(U,(q_j,\beta)\right),
\end{align*}
where $q_j\in U^{-1}(p_j)$.
Since
\[
	TV_K \left(v_j,(\alpha,\beta)\right) = K(p_j) + TV_K \left(U,(q_j,b)\right),
\]
$TV_{Kg}(v_j)\geq TV_{Kg}(U)$ implies that $\mathcal{F}(v_j)\geq\mathcal{F}(U)$.
% 原稿 2024/4/22　5-5/8
 In other words,
\[
	\int_\alpha^{q_j} \left\{(v_j-g)^2 - (U-g)^2 \right\} dx \geq 0.
\]
Dividing the region of integration $(\alpha,q_j)$ into $(\alpha,\gamma)$ and $(\gamma,q_j)$, we obtain
\[
	\int_{\alpha_j}^\gamma \left\{ g^2-(p_j-g)^2\right\} dx
	\leq \int_\gamma^{q_j} \left\{ (p_j-g)^2 - (U-g)^2\right\} dx,
\]
or
\[
	p_j \int_{\alpha_j}^\gamma (2g-p_j)\; dx
	\leq \int_\gamma^{q_j} (p_j-U) (p_j+U-2g)\; dx.
\]
Since $U\leq p_j$ on $(\gamma,q_j)$, the right-hand side is dominated by
\[
	p_j \int_\gamma^{q_j} |p_j + U - 2g|\; dx
	\leq p_j |q_j - \gamma| \left( 2p_j + 2\|g\|_\infty \right).
\]
Thus
\[
	\int_{\alpha_j}^\gamma (2g-p_j)\; dx 
	\leq |q_j-\gamma| \left( 2p_j + 2\|g\|_\infty \right).
\]
Sending $j\to\infty$ yields that
\[
	\int_\alpha^\gamma g\; dx \leq 0,
\]
since $q_j\to\gamma$ by our assumption that $U(x)>0$ for $x>\gamma$ and $U$ is non-decreasing.
 The proof is now complete.
\end{proof}

We say that a closed interval $F$ is a \emph{facet} of $U$ if $F$ is a maximal nontrivial closed interval such that $U$ is a constant on the interior $\operatorname{int}F$ of $F$.
 Let $|F|$ denote its length.
% 原稿 2024/4/22　5-6/8
 We are able to claim a similar statement for each facet of a minimizer $U$.
\begin{lem} \label{LCGF}
Assume that $K$ satisfies (K1), (K2w) and (K3).
 Assume that $g\in C[a,b]$.
 Let $U\in BV(a,b)$ be a minimizer of $TV_{Kg}$.
 Assume that $U$ is non-decreasing.
 Let $F=[x_0,x_1]$ with $x_1<b$ be a facet of $U$.
 Then
\begin{equation} \label{Ekey}
	\int_F \left( g(x)-U \right) dx
	\leq \left( U(x_1+0) - U(x_1-0) \right)|F|/2.
\end{equation}
\end{lem}
\begin{proof}
We may assume $U\equiv0$ on $F$.
 By Lemma \ref{LPic}, $F\cap C\neq\emptyset$.
 We set
\[
	\alpha = \inf(F\cap C)
\]
which is still in $C$ since $C$ is closed.
 By Lemma \ref{LPic},
\[
	g(x) < U(x) = 0 \quad\text{on}\quad [x_0, \alpha)
\]
since $U$ is non-decreasing.
 If $U(x_1+0)=U(x_1-0)$, then $x_1\in C$ by Lemma \ref{LCoi}.
 Moreover, there is $p_j\in U(C)$ such that $p_j\downarrow U(\gamma)$ with $p_j>0$ since otherwise it would contradict the maximality of $F$ by Lemma \ref{LCoi}.
 Thus by Proposition \ref{PFEC},
\[
	\int_\alpha^{x_1} g(x)\; dx \leq 0.
\]
Thus, we obtain \eqref{Ekey} when $U$ does not jump at $x_1$ since we know $g<0$ on $[x_0,\alpha)$.

If $U(x_1+0)-U(x_1-0)>0$, we may apply Proposition \ref{PCong} and conclude that
\[
	\int_\alpha^{x_1} g\; dx
	\leq U(x_1+0)(x_1-\alpha)/2.
\]
% 原稿 2024/4/22　5-7/8
% Again by $g<0$ on $[x_0,\alpha)$, we conclude that
%\[
%	\int_\alpha^{x_1} g\; dx
%	\leq U(x_1+0)(x_1-\alpha)/2
%	\leq U(x_1+0)|F|/2.
%\]
Since we know that $g<0$ on $[x_0,\alpha)$, the proof of Lemma \ref{LCGF} is now complete.
\end{proof}
\begin{thm} \label{TFidInc}
Assume that $K$ satisfies (K1), (K2w) and (K3).
 Assume that $g\in C[a,b]$.
 Let $U\in BV(a,b)$ be a minimizer of $TV_{Kg}$.
 Assume that $U$ is non-decreasing.
 Let $\alpha,\beta\in C$ with $\alpha<\beta<b$.
 Then
\[
	\int_\alpha^\beta \left( U(\alpha) - g \right)^2 dx
	- \int_\alpha^\beta (U-g)^2\; dx
	\leq \rho^2 (\beta-\alpha), 
\]
where $\rho=U(\beta)-U(\alpha)\geq0$.
\end{thm}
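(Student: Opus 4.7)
The plan is to reduce the claim to a facet-by-facet estimate via Lemma~\ref{LCGF}. First I would subtract $U(\alpha)$ from both $U$ and $g$, so without loss of generality $U(\alpha)=g(\alpha)=0$ and $U(\beta)=g(\beta)=\rho$. Since $U$ is non-decreasing with these boundary values, $0\le U\le\rho$ on $[\alpha,\beta]$. Factoring
\[
(U(\alpha)-g)^{2}-(U-g)^{2}=(U(\alpha)-U)\bigl(U(\alpha)+U-2g\bigr)
\]
and substituting the normalization reduces the claim to
\[
I:=\int_{\alpha}^{\beta}U(2g-U)\,dx\le\rho^{2}(\beta-\alpha).
\]

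Next I would decompose $[\alpha,\beta]$, up to a null set, into the ``coincidence part'' $E\subseteq C\cap[\alpha,\beta]$ on which $U=g$, together with the facets $F_{i}=[x_{0,i},x_{1,i}]$ of $U$ on which $U\equiv c_{i}$; Lemma~\ref{LPic} guarantees that the complement of the facets lies in $C$. On $E$ the integrand equals $U^{2}\le\rho^{2}$, contributing at most $\rho^{2}|E|$. On each facet Lemma~\ref{LCGF} (applicable because $x_{1,i}\le\beta<b$) bounds $\int_{F_{i}}g\,dx\le c_{i}|F_{i}|+\tfrac{1}{2}J_{1,i}|F_{i}|$, where $J_{1,i}:=U(x_{1,i}+0)-U(x_{1,i}-0)$, and therefore
\[
\int_{F_{i}}U(2g-U)\,dx=2c_{i}\int_{F_{i}}g\,dx-c_{i}^{2}|F_{i}|\le c_{i}(c_{i}+J_{1,i})|F_{i}|.
\]
The decisive step is now $c_{i}(c_{i}+J_{1,i})\le\rho^{2}$, which I would obtain from the non-decreasing property: $c_{i}=U(x_{1,i}-0)\le U(\beta)=\rho$ and $c_{i}+J_{1,i}=U(x_{1,i}+0)\le U(\beta)=\rho$; for a facet terminating at $\beta$ itself, $\beta\in C$ combined with Lemma~\ref{LCoi} forces $U$ to be continuous at $\beta$, so $J_{1,i}=0$ and the estimate is preserved. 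Summing the facet contributions with the bound on $E$ yields $I\le\rho^{2}(\beta-\alpha)$.

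The hardest point will be the ``straddling'' facet containing $\beta$ in its interior, i.e.\ with $x_{1,i}>\beta$. In this case $c_{i}=\rho$ but $c_{i}+J_{1,i}=U(x_{1,i}+0)$ may strictly exceed $\rho$, so the clean pointwise bound $c_{i}(c_{i}+J_{1,i})\le\rho^{2}$ fails; moreover Lemma~\ref{LCGF} controls $\int_{F_{i}}g\,dx$ over the whole facet rather than the sub-integral $\int_{x_{0,i}}^{\beta}g\,dx$ actually appearing in $I$. To close this gap I would prove a truncated analogue of Lemma~\ref{LCGF} on $[x_{0,i},\beta]$, exploiting that $U$ is continuous at $\beta\in C$ (Lemma~\ref{LCoi}) together with minimality via a perturbation that introduces a small plateau just to the left of $\beta$, in the spirit of Proposition~\ref{PFEC}; this should produce the required average bound for $g$ on $[x_{0,i},\beta]$ and complete the argument.
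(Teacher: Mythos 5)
Your main argument is essentially the paper's own proof: normalize $U(\alpha)=0$, rewrite the left-hand side as $\int_\alpha^\beta U(2g-U)\,dx$, split $(\alpha,\beta)$ into the coincidence set (where the integrand is $U^2\le\rho^2$) and the facets, and on each facet combine Lemma \ref{LCGF} with monotonicity to get the bound $U(x_1-0)\,U(x_1+0)\,|F|\le\rho^2|F|$, using continuity at $\beta\in C$ (Lemma \ref{LCoi}) when a facet ends exactly at $\beta$. Up to this point your computation coincides with the paper's, which simply asserts $x_1\le\beta<b$ for every facet in the decomposition and does not discuss the case of a facet containing $\beta$ in its interior.

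Your last paragraph therefore goes beyond the written proof, and the subtlety you flag is real: if the facet through $\beta$ extends past $\beta$, Lemma \ref{LCGF} controls $\int_F(g-U)$ over the whole facet, with the (possibly large) jump at its true right endpoint, not the truncated integral $\int_{x_0}^{\beta}(g-U)$ that enters the estimate. However, the repair you sketch is doubtful as stated. The mechanism of Proposition \ref{PFEC} is that the new jump of size $p_j$ costs nothing extra, because $TV_K(U,(\gamma,q_j))\ge K(p_j)$ is already present in the energy of $U$; this uses crucially that $U$ increases strictly immediately to the right of $\gamma$. In the straddling situation $U$ stays constant past $\beta$, so a ``small plateau just to the left of $\beta$'' of height $\varepsilon$ creates two genuinely new jumps whose $TV_K$ cost is approximately $2\varepsilon$ by (K3), i.e.\ of the same order as the fidelity gain $\lambda\varepsilon\int(g-U)\,dx$; such a perturbation only yields a bound of the form $\int(g-U)\,dx\le 2/\lambda$, not the nonpositivity (or the $\rho$-controlled average bound) you need. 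So this step requires a different idea, for instance exploiting the structure of $U$ to the right of the facet (the up-jump at its right endpoint and the location where the fidelity of a single-jump competitor is minimized, as in Proposition \ref{PCong}), or reducing to the configuration actually used in Lemma \ref{LLEN}; as written, your proposal leaves this case open, though in fairness the paper's proof is silent on it as well.
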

\begin{proof}
If $\rho=0$, a minimizer must be constant $U(\alpha)$ so the above inequality is trivially fulfilled.
 We may assume $\rho>0$.

As before, we may assume $U(\alpha)=0$ so that $U(x)\geq0$.
 We proceed
\[
	\int_\alpha^\beta g^2\; dx
	- \int_\alpha^\beta (U-g)^2\; dx
	=\int_\alpha^\beta U(2g-U)\; dx.
\]
On the coincidence set $C$,
\[
	\int_C U(2g-U)\; dx = \int_C U^2\, dx
\]
since $g=U$ on $C$.
 On a facet $F=[x_0,x_1]$, by Lemma \ref{LCGF},
% 原稿 2024/4/22　5-8/8
\begin{align*}
	\int_F U(2g-U)\;dx &= U(x_1-0) \int_F \left\{ \left( 2g-2U(x_1-0)\right) + U(x_1-0) \right\} dx \\
	&\leq U(x_1-0) \left( \left( U(x_1+0) - U(x_1-0) \right) + U(x_1-0) \right) |F| \\
	&= U(x_1-0) U (x_1+0) |F| \\
	&\leq \rho^2|F|
\end{align*}
since $x_1\leq\beta<b$.
 By Lemma \ref{LPic} we know that $(\alpha,\beta)=\bigcup_{i=1}^\infty F_i\cup C$ with at most countably many facets $\{F_i\}$.
 Thus, above estimates on $C$ and $F$ yield
\begin{align*}
	\int_\alpha^\beta U(2g-U)\;dx
	&\leq \sum_{i=1}^\infty \int_{F_i} U(2g-U)\;dx
	+ \int_C U(2g-U)\;dx \\
	&\leq \rho^2 \sum_{i=1}^\infty |F_i| + \int_C U^2\;dx
     \leq \rho^2 \sum_{i=1}^\infty |F_i| + \rho^2|C|.
\end{align*}
We now conclude that
\[
	\int_\alpha^\beta g^2\;dx - \int_\alpha^\beta (U-g)^2\;dx
	\leq \rho^2 \left(\sum_{i=1}^\infty |F_i| + |C|\right)
	= \rho^2(\beta-\alpha). 
\]
\end{proof}

% 原稿 2024/4/24　5-9 1/7
%%%%%%%%%%%%%%%%%%%%%%%%%%%%%%%%%%%%%%%%%%%%%%%%%
\subsection{No possibility of fine structure} \label{SSNP}

Our goal in this subsection is to prove our main theorem.
 In other words, we shall prove that a minimizer does not allow to have a ``fine'' structure under the assumption (K2).
 At the end of this subsection, we prove our main theorem Theorem \ref{TMain}.
\begin{lem} \label{LLEN}
Assume that $K$ satisfies (K1), (K2) and (K3).
 Assume that $g\in C[a,b]$.
 Let $U\in BV(a,b)$ be a minimizer of $TV_{Kg}$.
 Assume that $U$ is non-decreasing.
 Let $\alpha,\beta,\gamma\in C$ satisfy $a\leq\alpha<\gamma<\beta\leq b$ and $U(\beta)-U(\gamma)\geq U(\gamma)-U(\alpha)>0$.
 Assume that $(\gamma,\beta)\cap C=\emptyset$.
 Then
\[
	\beta-\alpha > C_M/\lambda
\]
for $M\geq\operatorname{osc}_{[\alpha,\beta]}g$, where $C_M$ is the constant in (K2).
\end{lem}
\begin{proof}
We set $\rho_1= U(\gamma) -U(\alpha)$, $\rho_2=U(\beta)-U(\gamma)$.
 Since $(\gamma,\beta) \cap C=\emptyset$, by Lemma \ref{LCoi}, $U$ has only one jump at $x_1\in(\gamma,\beta)$ and $U=U(\beta)$ for $x\in(x_1,\beta)$, $U=U(\gamma)$ for $x\in(\gamma,x_1)$.

We set
\[
	v(x) = \left\{
\begin{array}{ll}
	U(\alpha), & x \in (\alpha, x_1), \\
	U(x), & x \notin (\alpha, x_1)
\end{array}
\right.
\]
and compare $TV_K(v)$ with $TV_K(U)$ on $(\alpha,\beta)$; see Figure \ref{FGUV}.
\begin{figure}[tb]
\centering 
\includegraphics[keepaspectratio, scale=0.25]{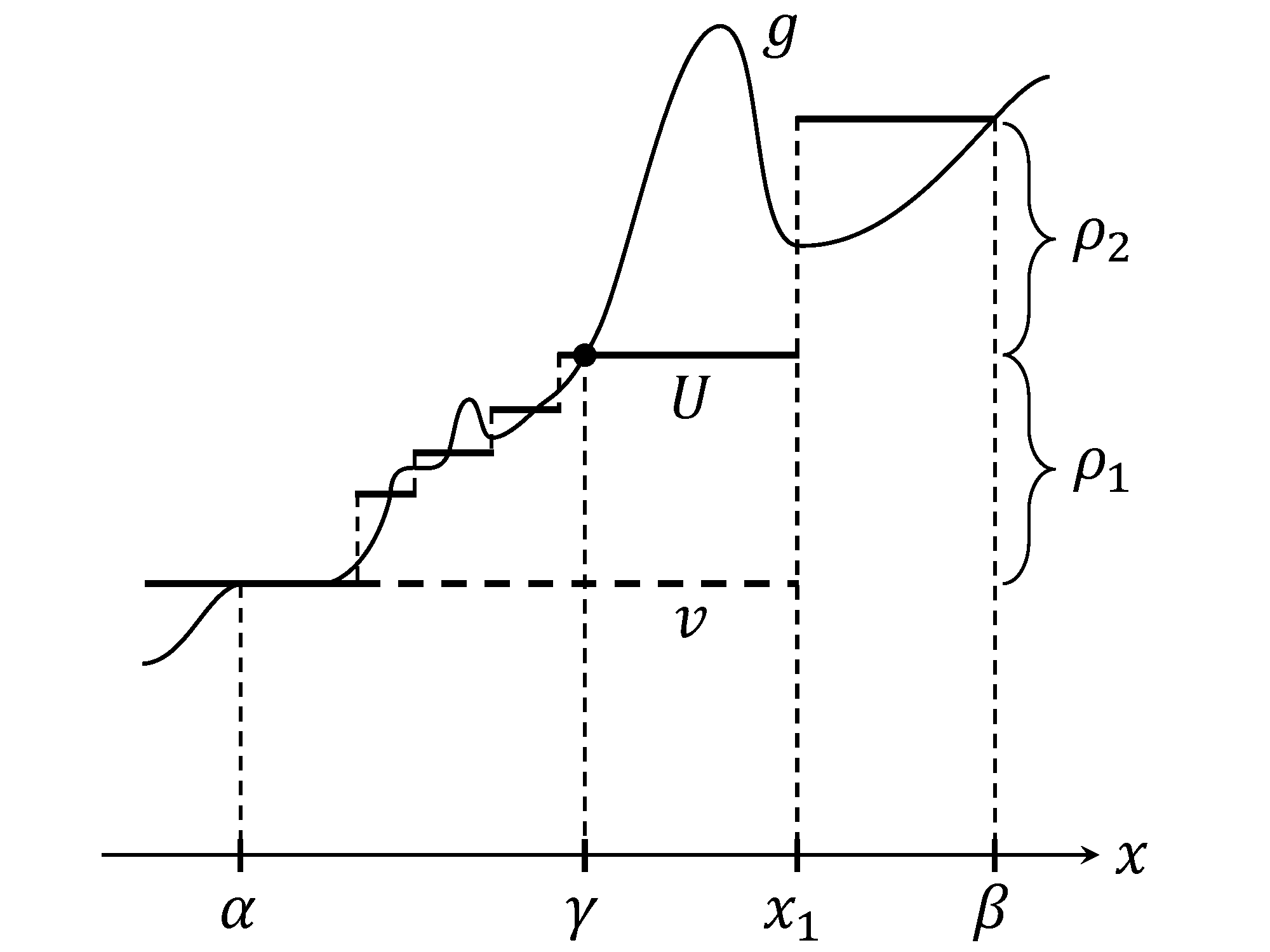}
\caption{the graph of $U$, $g$ and $v$\label{FGUV}}
\end{figure}
Then
\begin{equation*}
	TV_K \left(U,(\alpha,\beta)\right)
    = K(\rho_2) + TV_K\left(U,(\alpha,\gamma)\right)
	\geq K(\rho_2) + K(\rho_1)
\end{equation*}
by Lemma \ref{LCpTK}.
 Clearly,
\[
	TV_K \left(v,(\alpha,\beta)\right) = K(\rho_1+\rho_2).
\]
% 原稿 2024/4/24　5-10 2/7
By (K2), we see that
\begin{equation*}
	TV_K \left(U,(\alpha,\beta)\right) - TV_K\left(v,(\alpha,\beta)\right)  
	\geq K(\rho_1) + K(\rho_2) - K(\rho_1+\rho_2)
	\geq C_M \rho_1 \rho_2.
\end{equation*}
Since $U$ minimizes $\int_\gamma^\beta|U-g|^2\;dx$ for fixed $\rho_1$ and $\rho_2$, we have, by Lemma \ref{LPert},
\[
	\int_\gamma^{x_1} \left\{ (U-g)^2 - (v-g)^2 \right\} dx \geq -\rho_1(\rho_1+\rho_2)(x_1-\gamma).
\]
Since $\gamma<x_1<b$ and $U$ is a minimizer of $TV_{Kg}$ on $(\alpha,x_1)$, we have, by Theorem \ref{TFidInc}, 
\[
	\int_\alpha^\gamma \left\{ (U-g)^2 - (v-g)^2 \right\} dx \geq - \rho_1^2(\gamma-\alpha).
\]
We now conclude that
\[
	TV_{Kg}(U) - TV_{Kg}(v) \geq C_M \rho_1\rho_2
	- \left(\rho_1(\rho_1+\rho_2)(x_1-\gamma) + \rho_1^2(\gamma-\alpha) \right) \lambda\bigm/ 2.
\]
Since we assume that $\rho_1\leq\rho_2$, this implies that
\[
	TV_{Kg}(U) - TV_{Kg}(v) \geq \rho_1\rho_2 \left(C_M-(x_1-\alpha) \lambda \right).
\]
Since $U$ is a minimizer, $C_M-(x_1-\alpha)\lambda\leq0$, we conclude that
\[
	\beta-\alpha > x_1 - \alpha \geq C_M/\lambda.
\]
\end{proof}
% 原稿 2024/5/20 別紙A
From the proof of Lemma \ref{LLEN}, we have a rather general estimate for $U_0^\gamma$ defined at the beginning of Section \ref{SEFid}.
\begin{lem} \label{LGEs}
Assume that $K$ satisfies (K1), (K2) and (K3).
 Assume that $g\in C[\alpha,\beta]$ and $M\geq\operatorname{osc}_{[\alpha,\beta]}g$.
 Assume that $\rho=g(\beta)-g(\alpha)>0$ and $x_1\in(\alpha,\beta)$.
 Let $U$ be a non-decreasing function with $U(\alpha)=g(\alpha)$, $U(\beta)=g(\beta)$ which is continuous at $\alpha$ and $\beta$.
 Assume that $(\alpha,\beta)=\bigcup_{i=1}^\infty F_i\cup C$ where $F_i$ is a facet and $C$ is a coincidence set with $F_1=[x_1,\beta]$ and $F_2=[x_0,x_1]$, $x_0\in(\alpha,x_1)$.
 (The set $F_i$ for $i\geq 3$ could be empty.)
 Assume that $U(\beta-0)=g(\beta)$ and $g(\beta)-U(x_1-0):=\rho_2>\rho/2$.
 Assume further that $U$ satisfies \eqref{Ekey} on each $F_i$ for $i\geq3$.
 Assume that $F(x_1)\leq F(\alpha+0)$ for $U_0^{x_1}$. 
 If $C$ contains an interior point of $F_2$, then
\[
	TV_{Kg}(U) - TV_{Kg}(U_0^{x_1})
	\geq  \rho_1 \rho_2 \left( C_M - (x_1 - \alpha)\lambda \right),
\]
with $\rho_1=\rho-\rho_2$.
\end{lem}
We are interested in the case that there are no jumps.
 We begin with an elementary property of $K$.
\begin{lem} \label{LEeK}
Assume that $K$ satisfies (K2) and (K3).
 Then, $\rho-K(\rho)\geq C_M\rho^2/2$.
\end{lem}
% 原稿 2024/4/24　5-11 3/7
\begin{proof}
An iterative use of (K2) yields
\begin{align*}
	K(\rho) &\leq 2K \left(\frac\rho2\right) - C_M\left(\frac\rho2\right)^2
	\leq 2 \left( 2K \left(\frac\rho4\right) - C_M\left(\frac\rho4\right)^2 \right) - C_M\left(\frac\rho2\right)^2 \\
	&\leq 2^m K \left(\frac{\rho}{2^m}\right) - C_M \rho^2 \sum_{j=1}^m 2^{j-1}\left(\frac{1}{2^j}\right)^2
\end{align*}
for $m=1,2,\ldots$.
 Since $\sum_{j=1}^m 2^{-j-1}\to1/2$ as $m\to\infty$, sending $m\to\infty$ yields
\[
	K(\rho) \leq \rho - C_M\rho^2/2
\]
by (K3).
 The proof is now complete.
\end{proof}
\begin{lem} \label{LCont}
Assume the same hypotheses of Lemma \ref{LLEN} concerning $K$ and $g$.
 Let $U\in BV(a,b)$ be a minimizer of $TV_{Kg}$.
 Assume that $U$ is non-decreasing and continuous on $[\alpha,\beta]\subset[a,b]$ with $\alpha,\beta\in C$.
 Then $U(\alpha)=U(\beta)$.
\end{lem}
\begin{proof}
Suppose that $U(\alpha)\neq U(\beta)$ so that $U(\alpha)<U(\beta)$, there would exist at least one $p\in\left(U(\alpha),U(\beta)\right)$ such that $U^{-1}(p)$ is a singleton $\{x_0\}$ since $U$ is continuous.
 By Lemma \ref{LPic}, $x_0\in C$.
 Moreover, there is a sequence $p_j\downarrow p$ ($j\to\infty$) such that $U^{-1}(p_j)$ is a singleton $\{x_j\}$.
 This is because the set of values $q$ where $U^{-1}(q)$ is not a singleton is at most a countable set.
 Since $U^{-1}(p)$ is a singleton, $x_j\downarrow x_0$.
 Again by Lemma \ref{LPic}, $x_j\in C$.
 We set
\[
	v_j(x) = \left\{
\begin{array}{ll}
	p, & x \in (x_0, x_j), \\
	U(x), & x \notin (x_0, x_j).
\end{array}
\right.
\]
 By Theorem \ref{TFidInc}, we obtain
\[
	\frac2\lambda \left( \mathcal{F}(v_j) - \mathcal{F}(U) \right) \leq \rho_j^2(x_j-x_0)
\]
with $\rho_j=p_j-p$.

% 原稿 2024/4/24　5-12 4/7
Since $TV=TV_K$ for a continuous function, we see that
\[
	TV_K \left( U,(x_0,x_j) \right) = \rho_j.
\]
By Lemma \ref{LEeK}, we observe that
\[
	TV_K(U) - TV_K(v_j) = \rho_j - K(\rho_j) \geq C_M\rho_j^2/2, \quad
	M > \operatorname{osc}_{[\alpha,\beta]}g.
\]
We thus conclude that
\begin{equation*}
	TV_{Kg}(U) - TV_{Kg}(v_j) 
	\geq \left(C_M \rho_j^2 - \lambda \rho_j^2(x_j-x_0)\right)/2
	= \rho_j^2 \left( C_M - \lambda (x_j-x_0) \right)/2.
\end{equation*}
For a sufficiently large $j$, $C_M-\lambda(x_j-x_0)>0$ since $x_j\downarrow x_0$.
 This would contradict to our assumption that $U$ is a minimizer.
 Thus $U(\alpha)=U(\beta)$.
\end{proof}
\begin{thm} \label{TDis}
Assume that $K$ satisfies (K1), (K2) and (K3).
 Assume that $g\in C[a,b]$.
 Let $U\in BV(a,b)$ be a minimizer of $TV_{Kg}$.
 Let $\alpha,\beta\in C$ with $a\leq\alpha<\beta\leq b$.
 Assume that $\beta-\alpha\leq A_M/\lambda$ with $A_M=\min\{c_M/M, C_M \}$ and $\operatorname{osc}_{[\alpha,\beta]}g\leq M$, where $c_M$ is in (K3w) and $C_M$ is in (K2).
 Then the values of $U$ on $[\alpha,\beta]$ are either $U(\alpha)$ and $U(\beta)$ and $U$ has at most one jump point in $(\alpha,\beta)$.
\end{thm}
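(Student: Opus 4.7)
The plan is to use Theorem~\ref{TMon} to reduce to the monotone setting, combine Lemmas~\ref{LPic}, \ref{LCoi}, and \ref{LCont} to force $U$ into a staircase shape between coincidence points, and then invoke Lemma~\ref{LLEN} (and its left-right mirror) to rule out two or more jumps.

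Since $\beta-\alpha\leq A_M/\lambda\leq c_M/(\lambda M)$, Theorem~\ref{TMon} gives that $U$ is monotone on $[\alpha,\beta]$; assume without loss of generality that $U$ is non-decreasing. By Lemma~\ref{LPic}, $U$ is piecewise constant on each component of $(\alpha,\beta)\setminus C$, and by Lemma~\ref{LCoi}, $U$ is continuous on $C$ with at most one jump per such component. Hence every jump $y_j$ of $U$ in $(\alpha,\beta)$ sits in some component $(p_j,q_j)$ of $(\alpha,\beta)\setminus C$ with $p_j,q_j\in C$, where $U=U(p_j)$ on $(p_j,y_j)$ and $U=U(q_j)$ on $(y_j,q_j)$. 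Between consecutive jumps $y_j$ and $y_{j+1}$, the function $U$ is continuous on $[q_j,p_{j+1}]$ with both endpoints in $C$, so Lemma~\ref{LCont} forces $U(q_j)=U(p_{j+1})$, and $U$ is constant there; the analogous statement on $[\alpha,p_1]$ and $[q_k,\beta]$ gives values $U(\alpha)$ and $U(\beta)$, respectively. Thus $U$ is a monotone staircase with levels $U(\alpha)=c_0<c_1<\cdots<c_k=U(\beta)$, where $k$ is the number of jumps.

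Suppose for contradiction $k\geq 2$. Pick two consecutive jumps $y_j, y_{j+1}$ and set $\rho_1=c_j-c_{j-1}>0$, $\rho_2=c_{j+1}-c_j>0$. If $\rho_2\geq\rho_1$, apply Lemma~\ref{LLEN} to the triple $(p_j,p_{j+1},q_{j+1})$: these lie in $C$, $(p_{j+1},q_{j+1})\cap C=\emptyset$ is a gap component, and $U(q_{j+1})-U(p_{j+1})=\rho_2\geq\rho_1=U(p_{j+1})-U(p_j)$; the lemma yields $q_{j+1}-p_j>2C_M/\lambda$, contradicting $q_{j+1}-p_j\leq\beta-\alpha\leq 2C_M/\lambda$. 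If instead $\rho_1>\rho_2$, the same contradiction follows from the left-right mirror of Lemma~\ref{LLEN} applied to the triple $(p_j,q_j,q_{j+1})$, where now $(p_j,q_j)\cap C=\emptyset$ is the empty gap and the larger jump sits on the left. Hence $k\leq 1$: if $k=0$, Lemma~\ref{LCont} forces $U$ constant on $[\alpha,\beta]$; if $k=1$, $U$ equals $U(\alpha)$ to the left of its unique jump and $U(\beta)$ to the right.

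The main obstacle is that the paper states Lemma~\ref{LLEN} only in the form with $(\gamma,\beta)\cap C=\emptyset$ and $\rho_2\geq\rho_1$, while the second half of the case split requires its mirror. I would derive this mirror lemma as a brief preparatory remark by repeating the proof with the trial function $v$ chosen to equal $U(\beta)$ on $(x_1,\beta)$ and $U$ elsewhere (where $x_1$ is the unique jump inside $(\alpha,\gamma)$), applying Theorem~\ref{TFidInc} on $(\gamma,\beta)$ and Lemma~\ref{LPert} on $(x_1,\gamma)$; the gain $C_M\rho_1\rho_2$ from (K2) and the arithmetic combining the fidelity bounds go through unchanged under $\rho_1\geq\rho_2$, yielding $\beta-\alpha>2C_M/\lambda$ exactly as before.
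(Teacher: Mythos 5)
Your overall route is the same as the paper's: Theorem \ref{TMon} to reduce to a non-decreasing minimizer, Lemmas \ref{LPic}, \ref{LCoi}, \ref{LCont} for the local structure, and Lemma \ref{LLEN} plus a left--right mirror to forbid two jumps. Your worry about the missing mirror is legitimate but easily handled: rather than re-running the proof with mirrored versions of Lemma \ref{LPert} and Theorem \ref{TFidInc}, one can simply apply Lemma \ref{LLEN} to $-U(-x)$ with data $-g(-x)$, which is again a non-decreasing minimizer; this is exactly the device the paper uses.

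The genuine gap is the step ``Thus $U$ is a monotone staircase with levels $c_0<c_1<\cdots<c_k$, where $k$ is the number of jumps,'' followed by ``pick two consecutive jumps.'' Nothing proved up to this point guarantees that $U$ has finitely many jumps in $(\alpha,\beta)$, nor even that any two jumps are consecutive in the sense of having no further jump between them. The coincidence set $C$ is only known to be closed, so the components of $(\alpha,\beta)\setminus C$ may be countably many and densely ordered, like the complementary intervals of a Cantor set, each carrying exactly one jump as permitted by Lemmas \ref{LPic} and \ref{LCoi}; Lemma \ref{LCont} does not exclude this, since in such a configuration no nondegenerate interval with endpoints in $C$ is jump-free. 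Then there is no last jump $y_k$, no finite level sequence, and no pair of consecutive jumps, so your contradiction never gets started --- and ruling out precisely this kind of fine structure is the content of the theorem (and of Theorem \ref{TMain}), so it cannot be presupposed. The paper avoids the issue by anchoring at a jump point $x_0$ of maximal jump size (which exists because the jump sizes of a monotone $BV$ function are summable), taking the gap component $(\gamma,\beta')$ of $C$ containing $x_0$, and applying Lemma \ref{LLEN} there, with the reflected argument for the other side. To repair your proof you need a selection of this kind: for instance, anchor at the maximal jump and choose the left comparison point in $C$ close enough in value of $U$ that the hypothesis $U(\beta')-U(\gamma)\geq U(\gamma)-U(\alpha'')>0$ of Lemma \ref{LLEN} (or of its mirror) is actually satisfied, instead of relying on the existence of consecutive jumps.
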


% 原稿 2024/4/24　5-13 5/7
\begin{proof}
By Theorem \ref{TMon}, $U$ is non-decreasing in $[\alpha,\beta]$.
 We may assume $U(\alpha)<U(\beta)$.
 If there is no jump, i.e., $U\in C[\alpha,\beta]$, by Lemma \ref{LCont}, $U$ is not a minimizer so $U$ must have at least one jump point in $(\alpha,\beta)$.
 We take a jump point $x_0$ such that jump size
\[
	U(x_0+0) - U(x_0-0)\ (>0)
\]
is maximum among all jump size of $U$ in $(\alpha,\beta)$.
 If $U(x_0-0)=U(\alpha)$, $U(x_0+0)=U(\beta)$, we get the conclusion.
 Suppose that $U(x_0-0)>U(\alpha)$.
 We set
\begin{gather*} % 中央揃えでOK
	\beta' = \inf \left\{ x \in (x_0,\beta] \bigm| x \in C \right\}, \\
	\gamma = \sup \left\{ x \in (\alpha,x_0] \bigm| x \in C \right\}.
\end{gather*}
By Lemma \ref{LPic} and Lemma \ref{LCoi}, we observe that $C$ is a closed set.
 Thus, $\beta'>x_0$ and $\gamma<x_0$.
 Since $U(x_0-0)>U(\alpha)$, we see $\gamma>\alpha$.
 Since the jump at $x_0$ is a maximal jump, we apply Lemma \ref{LLEN} on $(\alpha,\beta')$ to get
\[
	\beta' - \alpha > C_M/\lambda,
\]
which would contradict the assumption $\beta-\alpha\le A_M/\lambda$.
 We thus conclude that $U(x_0-0)=U(\alpha)$.
 If $U(x_0+0)<U(\beta)$, we consider $-U(-x)$ instead of $U$.
 We argue in the same way.
 We apply Lemma \ref{LLEN} and get a contradiction.
 We thus conclude that $U(x_0+0)=U(\beta)$.
 The proof is now complete.
\end{proof}
\begin{proof}[Proof of Theorem \ref{TMain}]
By Lemma \ref{LPic} or more generally by Theorem \ref{TEx2}, $\inf g\leq U\leq\sup g$ on $[a,b]$ and $C$ is non-empty.
%We set
%\[
%	a' = \sup \left\{ x \bigm| [\operatorname{inf}C,x] \subset C\right\}, \quad
%	b' = \inf \left\{ x \bigm| [x,\sup C] \subset C\right\},
%\]
%where $C$ is the coincidence set.
 We take an integer $m$ so that
\[
	m > (b-a)\lambda /A_M.
\]
% 原稿 2024/4/24　5-14 6/7
We divide $[a,b]$ into $m$ intervals so that the length of each interval is less than $A_M/\lambda$ and the boundaries of each interval $[x_0,x_1]$ does not contain a jump point of $U$.
% (This is possible since $J_U$ is at most a countable set.)
 (This is possible if we shift $x_0,x_1$ a little bit unless $x_0=a$, or $x_1=b$ since $J_U$ is at most a countable set.
 If $x_0=a$ (resp.\ $x_1=b$), $U$ must be continuous at $x_0=a$ ($x_1=b$) since $U$ is continuous on the coincidence set $C$ by Lemma \ref{LCoi}.) 
 If $C\cap[x_0,x_1]$ is empty or singleton, then $U$ has at most one jump by Lemma \ref{LPic}.
 We next consider the case that $C\cap[x_0,x_1]$ has at least two points.
 We set
\[
	\alpha' = \inf \left( C\cap[x_0,x_1] \right), \quad
	\beta' = \sup \left( C\cap[x_0,x_1] \right)
\]
and may assume $\alpha'<\beta'$.
 Since $\beta'-\alpha'<A_M/\lambda$, Theorem \ref{TDis} implies that $U$ only takes two values $U(x_0)$ and $U(x_1)$ and has at most one jump on $(\alpha',\beta')$.
 By Lemma \ref{LPic}, $U$ is constant on $[x_0,\alpha']$ and $[\beta',x_1]$.

% 
%In the case $\alpha'=a'$ and $\beta'<b'$, there is a sequence $\alpha_j\downarrow\alpha'$ with $\alpha_j\in C$.
% Since $\alpha_j>\alpha$, $U$ is piecewise constant with at most one jump in $(\alpha_j,\beta')$ for large $j$.
% Thus the value $U$ on $[\alpha_j,\beta']$ is independent of $j$.
% Sending $j\to\infty$ we observe that $U=U(a)$ on $[a,\alpha_j]$ for sufficiently large $j$.
% This would contradict the definition of $a'$.
% The case $\alpha'>a'$, $\beta'=b$ would contradict the definition of $b'$.
% The case $\alpha'=a$, $\beta'=b$ can be treated similarly.
%
%Thus, we conclude that $U$ is a piecewise constant function with 
 We now observe that on each $[x_0,x_1]$, $U$ has at most one jump.
 We thus conclude that $U$ is a piecewise constant function with at most $m$ jumps on $(a,b)$.
\end{proof}

% 原稿 2024/4/24　5-9 1/7
%%%%%%%%%%%%%%%%%%%%%%%%%%%%%%%%%%%%%%%%%%%%%%%%%
\subsection{Minimizers for monotone data} \label{SSMo}

We shall prove that the bound for number of jumps is improved when $g$ is monotone.
 In other words, we shall prove Theorem \ref{TMainMon}.

We first observe the monotonicity of a minimizer for $TV_{Kg}$ when $g$ is monotone.
\begin{lem} \label{LMon}
Assume that $K$ satifies (K1) and that $g\in C[a,b]$ is non-decreasing.
 Then a minimizer of $TV_{Kg}$ (in $BV(\Omega)$) is non-decreasing.
\end{lem}
\begin{proof}
Let $U$ be a minimizer.
 We take its right continuous representation.
 Suppose that $U(x_0)>U(y_0)\geq g(y_0)$ with some $x_0<y_0$.
 Then a chopped function
\[
	v(x) = \min \left( U(y_0), U(x) \right)
\]
decreases both $TV_K$ and the fidelity term so that
\[
	TV_{Kg}(v) < TV_{Kg}(U);
\]
see Figure \ref{FCh}.
\begin{figure}[tb]
\centering 
\includegraphics[keepaspectratio, scale=0.25]{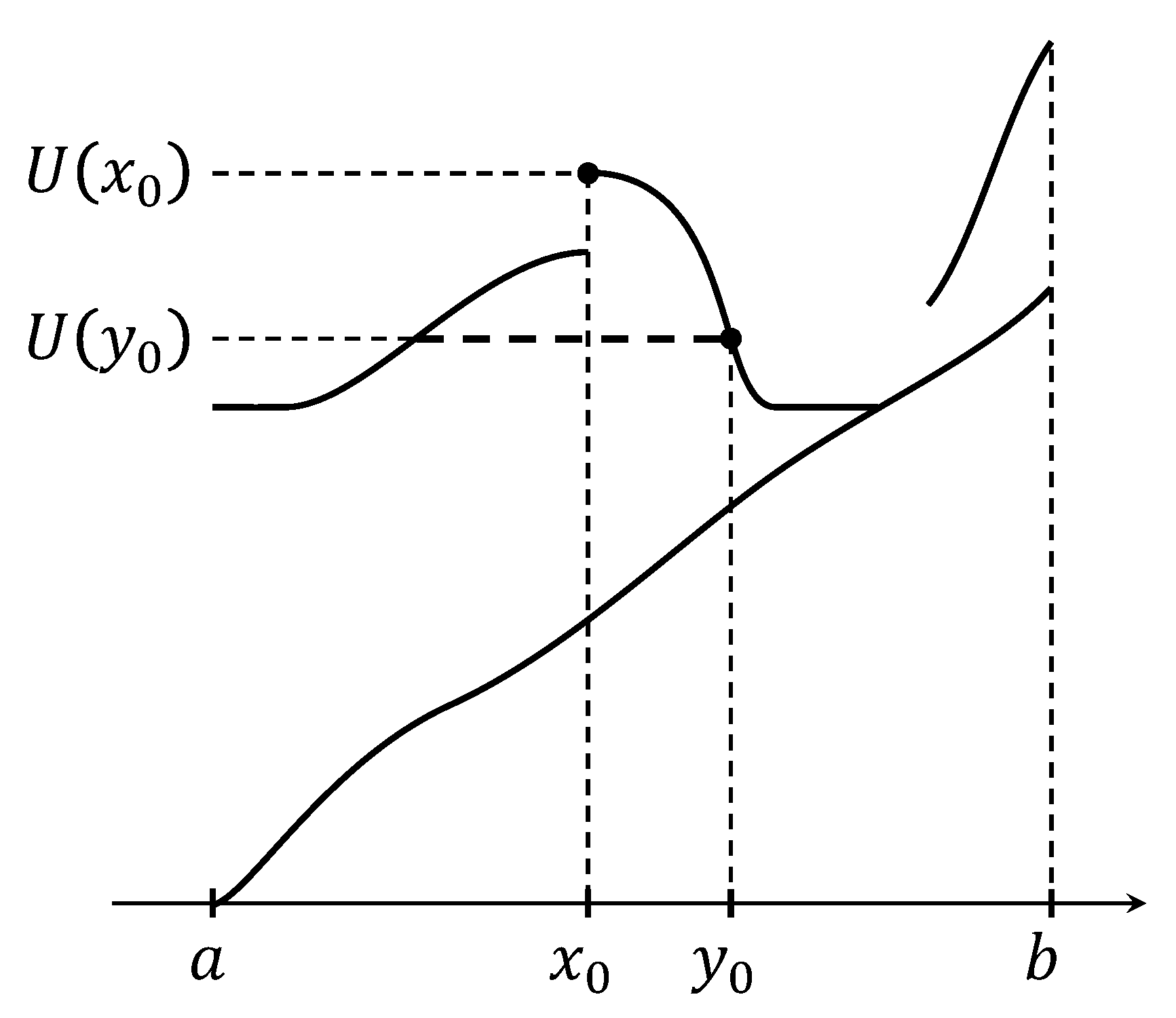}
\caption{chopped function\label{FCh}}
\end{figure}
Thus, if $U(y_0)\ge g(y_0)$, then $U(x)<U(y_0)$ for all $x<y_0$.
 A symmetric argument implies that if $U(y_0)\le g(y_0)$, then $U(x)<U(y_0)$ for all $x<y_0$.
 Thus $U$ is non-decreasing.
\end{proof}
\begin{proof}[Proof of Theorem \ref{TMainMon}]
If $g$ is monotone, a minimizer of $TV_{Kg}$ is automatically monotone by Lemma \ref{LMon}.
 We don't need to invoke Theorem \ref{TMon} so the bound $c_M/M$ is unnecessary.
 Thus Theorem \ref{TMainMon} follows from Theorem \ref{TMain}.
\end{proof}
\begin{proof}[Proof of Theorem \ref{TPie}]
If we restrict ourselves for piecewise constant minimizers, then the conclusion of Proposition \ref{PFEC}, Lemma \ref{LCGF}, Theorem \ref{TFidInc}, Lemma \ref{LLEN}, Lemma \ref{LGEs} are still valid by replacing (K3) by (K3w).
 This is because in these assertions, (K3) is used through Lemma \ref{LCpTK} but as remarked in Remark \ref{RK3w}, (K3) is unnecessary if we consider a piecewise constant minimizers.
 The conclusion of Lemma \ref{LCont} is trivial for a piecewise constant function.
 Thus, the conclusion of Theorem \ref{TDis} is still valid by replacing (K3) by (K3w) for piecewise constant minimizers.
 Since all tools including Theorem \ref{TMon} and Theorem \ref{TDis} used in the proof of Theorem \ref{TMain} and Theorem \ref{TMainMon} are proved under (K3w) instead of (K3) for piecewise constant minimizers, the proof of Theorem \ref{TPie} is now complete.
\end{proof}

It is not difficult to get a minimizer when $g$ is strictly increasing for $TV_g$, i.e.,
\[
	TV_g(u) = TV(u) + \mathcal{F}(u).
\]
By Lemma \ref{LMon}, a minimizer $U$ must be non-decreasing.
 (In this problem, $TV_g$ is strictly convex and lower semicontinuous in $L^2(\Omega)$, so there exists a unique minimizer.)
 We note that 
\[
	TV(u) = u(b) - u(a)
\]
provided that $u$ is non-decreasing.
 For positive numbers $d_1$, $d_2$, we set \\ $a_1=g^{-1}\left(g(a)+d_1\right)$, $a_2=g^{-1}\left(g(b)-d_2\right)$.
 To minimize $\mathcal{F}(u)$, we take $d_1$ and $d_2$ such that
\begin{gather*}
	d_1 = \frac{\lambda}{2} \int_a^{a_1} \left| g(a)+d_1 -g\right|^2 dx \\ 
	d_2 = \frac{\lambda}{2} \int_{a_2}^a \left| g(b)-d_2 - g\right|^2 dx.
\end{gather*}
See Figure \ref{FB}.
\begin{figure}[tb]
\centering 
\includegraphics[keepaspectratio, scale=0.25]{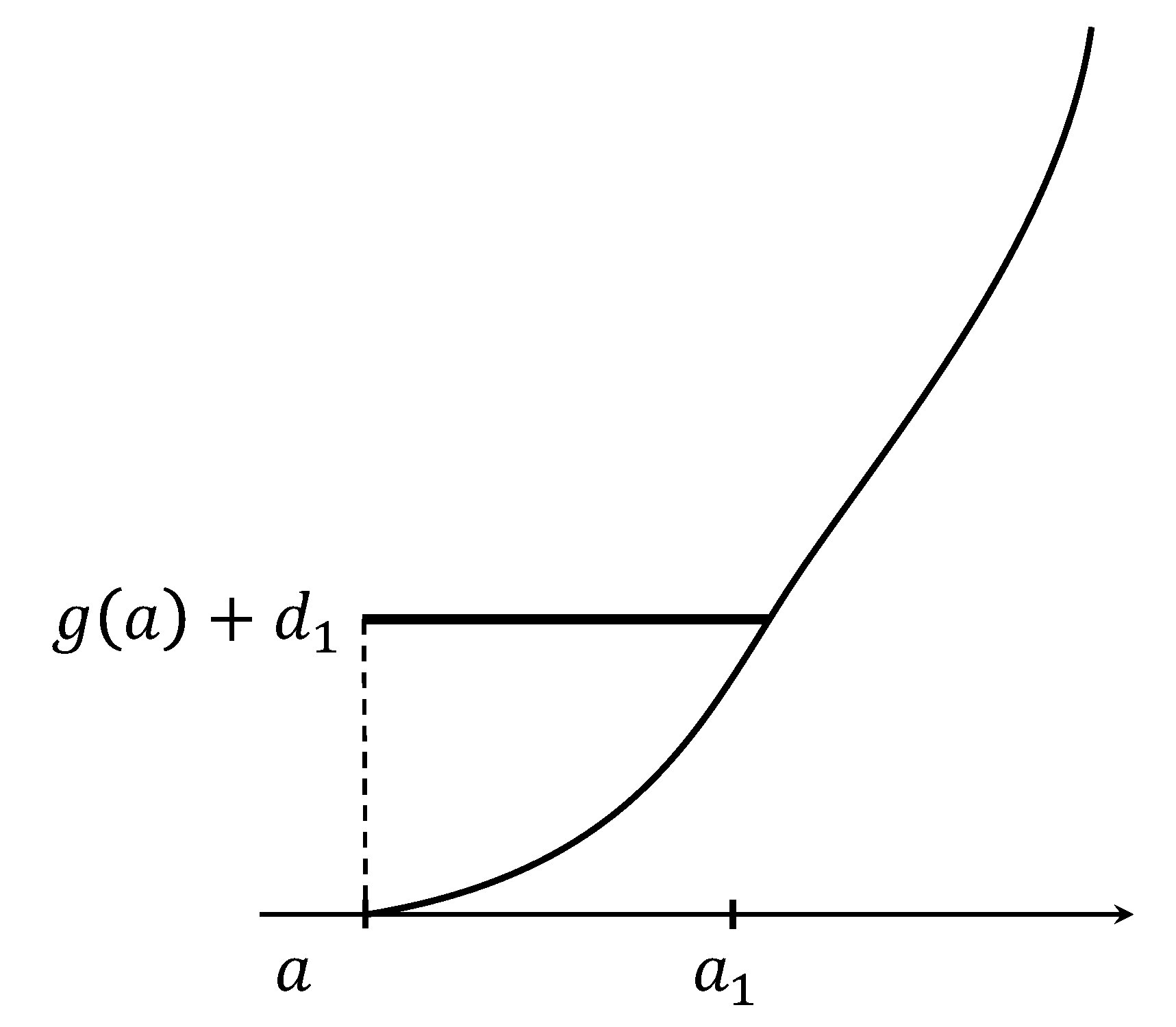}
\caption{near the boundary\label{FB}}
\end{figure}
If such $d_1$ and $d_2$ exist, then the minimizer $U$ must be
\[
	U(x) = \min \left\{ \max \left(g(a)+d_1,u(x)\right), g(b)-d_2 \right\}
\]
provided that $g(a)+d_1\leq g(b)-d_2$.
 This formation of a flat part near the boundary occurs by the natural boundary condition.
 (In general, the minimizer has no jumps if $g$ is continuous for $TV_g$ (cf.\ \cite{CL}, \cite{GKL}).)

%%%%%%%%%%%%%%%%%%%%%%%%%%%%
% 原稿 2024/5/25　1/6
\section{A sufficient condition for (K2)} \label{SKdef} % Section 5

We give a sufficient condition for (K2) if $K$ is derived as a limit of the Kobayashi-Warren-Carter energy, i.e., $K$ is of the form \eqref{EKdef}.

We first consider a kind of Fenchel dual of a function $f$.
 We set
\begin{equation} \label{EHdef}
	H(\rho) = \inf_{x>0} \left(\rho x + f(x) \right)
\end{equation}
for a real-valued function $f$ on $[0,\infty)$.
 If we use the Fenchel dual, it can be written as
\[
	H(\rho) = -f^*(-\rho) = - \sup_x\left((-\rho)x - \bar{f}(x) \right),
\]
where $\bar{f}(x)=f(x)$ for $x\geq0$ and $\bar{f}(x)=\infty$ for $x<0$.
 Thus, $H$ is concave in $[0,\infty)$.
 We introduce assumptions.
\begin{enumerate}
\item[(f1)] $f\in C^1(0,1]\cap C[0,1]$;
\item[(f2)] $f$ takes its minimum value $0$ at $x=1$.
 Moreover, $f>0$ on $[0,1)$ and $f\ge0$ on $\mathbb{R}$;
\item[(f3)] $f'(x)<0$ for $x\in(0,1)$ and $f'(+0)$($:=\lim_{x\downarrow0}f'(x)$)$=-\infty$.
\end{enumerate}
\begin{lem} \label{LK2}
Assume (f2).
 Then $H(\rho)>0$ for $\rho>0$ and $H(0)=0$.
 Assume further (f1) and (f3).
 Then $H(\rho)<f(0)$ for all $\rho>0$ and there is $x_\rho\in(0,1)$ for $\rho>0$ such that $H(\rho)=\rho x_\rho+f(x_\rho)$ and $H(\rho)$ is strictly increasing in $\rho$.
 Moreover, $x_\rho$ is strictly decreasing in $\rho$ and $x_\rho\uparrow 1$ as $\rho\downarrow0$.
 Furthermore, it satisfies (K2) with $K=H$ if $f$ satisfies
\begin{equation} \label{EHC}
	\varliminf_{\rho\downarrow0} \left(f\circ(-f')^{-1}\right) (\rho) \bigm/ \rho^2 > 0.
\end{equation}
Here we define
\[
	(-f')^{-1}(\rho) = \min \left\{ x \in [0,1] \bigm|
	f'(x) = -\rho \right\}.
\] 
\end{lem}
% 原稿 2024/5/25　2/6
\begin{proof}
The positivity for $H(\rho)$ for $\rho>0$ and $H(0)=0$ is clear by the definition \eqref{EHdef}.
 Also, the existence in $[0,1]$ of a minimizer easily follows by (f1) and (f2).
 Although $x_\rho$ may not be unique, $x_\rho\uparrow1$ as well as monotonicity of $x_\rho$ is guaranteed by (f3). 
 The assumption $f'(+0)=-\infty$ is invoked so that $x_\rho>0$ for all $\rho>0$. The strict monotonicity of $x_\rho$ yields the strict monotonicity of $H$.
 The bound $H(\rho)<f(0)$ is rather clear.

It remains to prove that \eqref{EHC} yields (K2).
 Since $x_\rho$ is the minimizer, it must satisfy
\[
	\rho = (-f') (x_\rho).
\]
We take $x_\rho=(-f')^{-1}(\rho)$ in Lemma \ref{LK2}.
 Since
\[
	H(\rho) = \rho x_\rho + f(x_\rho)
\]
by definition, we observe that for $\delta\in(0,1)$
\begin{align}
\begin{aligned} \label{EEH} % ラベルは中央
	H(\delta\rho) - \delta H(\rho) &\geq \rho_1 x_{\rho_1} + f(x_{\rho_1})
	- \left( \delta\rho x_{\rho_1} + \delta f(x_{\rho_1}) \right)\ \text{with}\ \rho_1=\delta\rho \\
	&= (1-\delta) f(x_{\rho_1}). 
\end{aligned}
\end{align}
For $\rho_2=(1-\delta)\rho$, we have
\[
	H \left( (1-\delta)\rho \right) - (1-\delta) H (\rho) \geq \delta f(x_{\rho_2}).
\]
We thus observe that
\[
	H(\rho_1) + H(\rho_2) - H(\rho)
	\geq (1-\delta) f(x_{\rho_1}) + \delta f(x_{\rho_2}).
\]
By \eqref{EHC}, we may assume that
\[
	\left( f \circ (-f')^{-1} \right) (\rho) \geq C_M \rho^2
\]
with some $C_M>0$ provided that $0\leq\rho\leq M$.
Thus
\begin{align*}
	(1-\delta) f(x_{\rho_1}) &= (1-\delta) \left( f \circ (-f')^{-1} \right) (\rho_1) \geq C_M(1-\delta)(\delta\rho)^2, \\
	\delta f(x_{\rho_2}) &\geq C_M \delta \left( (1-\delta) \rho \right)^2.  
\end{align*}
We now observe that
\begin{align*}
	H(\rho_1) + H(\rho_2) - H(\rho) &\geq C_M \left( (1-\delta)\delta\rho^2 \delta + (1-\delta)\delta\rho^2(1-\delta) \right) \\
	&= C_M \left( (1-\delta) \delta\rho^2 \right) 
	= C_M \rho_1 \rho_2.  
\end{align*}
We have proved (K2) for $H$.
\end{proof}
% 原稿 2024/5/25　3/6
\begin{lem} \label{LK3}
Assume that (f1), (f2) and (f3).
 Then $\lim_{\rho\downarrow0}H(\rho)/\rho=1$.
 In other words, $H$ satisfies (K3) with $K=H$.
\end{lem}
\begin{proof}
Taking $x=1$ in \eqref{EHdef}, we see that $H(\rho)\leq\rho$.
 We observe that
\begin{align*}
	H(\rho) - \rho &= \min_{x>0} \left( \rho(x-1) + f(x) \right) \\
	&= \rho \left( (x_\rho-1) + f(x_\rho)\bigm/\rho \right)\ \text{or} \\
	\frac{H(\rho)}{\rho} -1 &= x_\rho -1 + \frac{f(x_\rho)}{\rho}. 
\end{align*}
Since $f\geq0$ and $x_\rho\to1$ as $\rho\downarrow0$, we conclude
\[
	\varliminf_{\rho\downarrow0} \left( \frac{H(\rho)}{\rho} -1 \right) \geq 0+0,
\]
which now yields (K3).
\end{proof}
% 原稿 2024/5/25　4/6
\begin{remark} \label{RKCon}
\begin{enumerate}
\item[(i)] Without \eqref{EHC} we only get (K2w) since $f\geq0$ and the estimate \eqref{EEH} yields subadditivity.
\item[(i\hspace{-1pt}i)] If $f(x)=|x-1|^m$ for $m>0$, then \eqref{EHC} holds if and only if $m\geq2$.
 In fact, $f'(x)=m|x-1|^{m-2}(x-1)$ so that $(-f')^{-1}(\rho)=1-(\rho/m)^{1/(m-1)}$.
 The function $\left(f\circ(-f')^{-1}\right)(\rho)=(\rho/m)^{m/(m-1)}$ so \eqref{EHC} holds if and only if $m\geq2$.
\item[(i\hspace{-1pt}i\hspace{-1pt}i)] We may take other element of a preimage of $-f'$ of $\rho$ but as a sufficient condition the present choice is the weakest assumption.
\end{enumerate}
\end{remark}
We come back to \eqref{EKdef}.
 In other words,
\[
	K(\rho) = \min_\xi \left(\rho(\xi_+)^2 + 2G(\xi)\right), \quad
	G(\xi) = \left| \int_1^\xi \sqrt{F(\tau)}\; d\tau \right|.
\]
We assume that
\begin{enumerate}
\item[(F1)] $F\in C[0,\infty)$ and $F(x)$ takes the only minimum $0$ at $x=1$.
\end{enumerate}
If we set $f(x)=2G(x^{1/2})$, the property (F1) implies (f1), (f2) and (f3).
 Indeed, (f1), (f2) as well as $f'<0$ on $(0,1)$ are easy to check.
 Since 
\[
	f'(x) = 2G'(x^{1/2}) \frac12 x^{-1/2}
\]
and $G'(0)<0$, we see that $f'(+0)=-\infty$.
 Since \eqref{EHC} is property near $x=1$, \eqref{EHC} for $f$ and $G$ are equivalent.
 We thus obtain an sufficient condition so that $K$ in \eqref{EKdef} satisfies (K1), (K2) and (K3)
\begin{prop} \label{PSuff}
Assume (F1) and
\begin{equation} \label{EKSu}
\varliminf_{\rho\downarrow0} \left( G \circ (-G')^{-1} \right) (\rho) \bigm/ \rho^2 > 0.
\end{equation}
Then $K$ in \eqref{EKdef} satisfies (K1), (K2) and (K3).
\end{prop}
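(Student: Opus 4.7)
The plan is to reduce to Lemmas \ref{LK2} and \ref{LK3} by the change of variable $x=\xi^2$: set $f(x):=2G(\sqrt{x})$ for $x\geq 0$ and identify $K$ with the function $H$ from \eqref{EHdef} applied to this $f$. The candidate $\xi=0$ in \eqref{EKdef} contributes $2G(0)=f(0)$, while the substitution $x=\xi^2$ rewrites the remaining infimum over $\xi>0$ as $\inf_{x>0}(\rho x+f(x))=H(\rho)$; once (f2) is verified, Lemma \ref{LK2} gives $H(\rho)<f(0)$ for $\rho>0$, so the $\xi=0$ branch is inactive and $K=H$ on $[0,\infty)$.

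Next, I would verify (f1)--(f3) for $f$ from (F1). Assumption (F1) provides $G\in C^1(0,1]\cap C[0,1]$ with $G'(\xi)=-\sqrt{F(\xi)}$, $G(\xi)>0$ for $\xi\in[0,1)$, $G(1)=0$, and $G$ strictly decreasing on $(0,1)$. The chain rule transfers these to $f=2G(\sqrt{\cdot})$, giving $f'(x)=-\sqrt{F(\sqrt{x})}/\sqrt{x}$ on $(0,1)$ and hence (f1)--(f3). Applying Lemma \ref{LK3} then yields (K3) for $K=H$; property (K1) follows from Lemma \ref{LK2} (strict monotonicity, $H(0)=0$) together with continuity of $H$ on $[0,\infty)$, which is itself a consequence of concavity of $H$ (as an infimum of affine functions) together with $0\leq H(\rho)\leq\rho$.

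The remaining step is (K2), which via Lemma \ref{LK2} amounts to establishing \eqref{EHC} for $f$ using \eqref{EKSu} for $G$. A direct differentiation of $f(x)=2G(\sqrt{x})$ gives $(-f')(x)=(-G')(\sqrt{x})/\sqrt{x}$, so the minimizer $x_\rho=(-f')^{-1}(\rho)$ satisfies $(-G')(\sqrt{x_\rho})=\tilde\rho$, where $\tilde\rho:=\rho\sqrt{x_\rho}$. Lemma \ref{LK2} forces $x_\rho\uparrow 1$ as $\rho\downarrow 0$, whence $\tilde\rho/\rho\to 1$, and
\[
	\frac{(f\circ(-f')^{-1})(\rho)}{\rho^2}=\frac{2G(\sqrt{x_\rho})}{\tilde\rho^2}\cdot x_\rho.
\]
Invoking \eqref{EKSu} at level $\tilde\rho$ together with the strict monotonicity of $G$ on $(0,1)$ yields a positive liminf on the right-hand side, i.e., \eqref{EHC}. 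Lemma \ref{LK2} then delivers (K2).

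The main obstacle is that $\sqrt{x_\rho}$, although lying in the preimage of $\tilde\rho$ under $-G'$, need not be the \emph{minimum} preimage when $-G'$ is non-monotone near $1$, so $G(\sqrt{x_\rho})$ and $(G\circ(-G')^{-1})(\tilde\rho)$ need not coincide. Since both $\sqrt{x_\rho}$ and the minimum preimage tend to $1$ as $\rho\downarrow 0$ and \eqref{EKSu} is purely an asymptotic condition near $\xi=1$, I would handle this by arguing that every preimage in a shrinking neighborhood of $1$ produces the same liminf, so \eqref{EKSu} still provides the required lower bound. A technically cleaner route is to observe that, under (F1), the condition \eqref{EKSu} is equivalent to a quadratic lower bound $F(\xi)\geq c(1-\xi)^2$ on a one-sided neighborhood of $\xi=1$; this bound then controls $G(\sqrt{x_\rho})$ from below by a multiple of $\rho^2$ directly, bypassing the inversion $(-G')^{-1}$ altogether.
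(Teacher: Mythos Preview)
Your strategy is exactly the paper's: set $f(x)=2G(\sqrt{x})$, identify $K$ with $H$ from \eqref{EHdef}, verify (f1)--(f3) from (F1), and invoke Lemmas \ref{LK2} and \ref{LK3}. The paper's proof of Proposition \ref{PSuff} is in fact just two sentences citing those two lemmas; the passage immediately preceding the proposition records that (F1) implies (f1)--(f3) and that ``since \eqref{EHC} is a property near $x=1$, \eqref{EHC} for $f$ and $G$ are equivalent,'' with no further justification.

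So the only place where you go beyond the paper is in actually trying to prove that equivalence, and the ``obstacle'' you flag is precisely what the paper suppresses. Your computation $(-f')(x)=(-G')(\sqrt{x})/\sqrt{x}$ is right, and the point that $\sqrt{x_\rho}$ need not be the \emph{minimum} preimage of $\tilde\rho=\rho\sqrt{x_\rho}$ under $-G'$ is genuine; since $G$ is strictly decreasing, this could in principle make $G(\sqrt{x_\rho})$ smaller than $(G\circ(-G')^{-1})(\tilde\rho)$, so \eqref{EKSu} does not immediately transfer. Your first proposed patch (``every preimage near $1$ gives the same liminf'') is too vague as stated. Your second patch is the right direction but not quite correct as written: \eqref{EKSu} is not equivalent to a one-sided quadratic lower bound $F(\xi)\geq c(1-\xi)^2$ alone---to control $G(\sqrt{x_\rho})/\rho^2$ you also need an upper bound of the same order, since $\rho^2$ is tied to $F(\sqrt{x_\rho})$. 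In short, your approach matches the paper's, and you have correctly located the one nontrivial step that the paper asserts without argument; a fully rigorous proof would need a sharper two-sided analysis of $F$ near $\xi=1$ (or an additional monotonicity hypothesis such as (F2), which the paper introduces only afterwards for the discussion following the proposition).
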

\begin{proof}
By Lemma \ref{LK2}, (K2) is fulfilled.
 Since $K$ is concave in $[0,\infty)$, it is clear that $K$ is continuous on $[0,\infty)$.
 Lemma \ref{LK2} shows that $K$ is strictly increasing so we have proved a property stronger than (K1), continuity and strictly increasing.
 The property (K3) follows from Lemma \ref{LK3}.
\end{proof}

% 原稿 2024/5/25　5/6
We conclude this section by examining the property \eqref{EKSu}.
 This condition is equivalent to saying that
\[
	\varliminf_{\rho\downarrow0} \int_{F^{-1}(\rho^2)}^1 \sqrt{F(\tau)}\; d\tau \bigm/ \rho^2 > 0,
\]
where $F^{-1}(\rho^2)=\min\left\{x\in[0,1] \bigm| F(x)=\rho^2 \right\}$.
 We set
\[
	\bar{F}(x) = F(1-x).
\]
The condition \eqref{EKSu} is now equivalent to
\begin{equation} \label{EFban}
	\varliminf_{\rho\downarrow0} \int_0^{\bar{F}^{-1}(\rho^2)} \sqrt{\bar{F}(\tau)} \; d\tau \bigm/ \rho^2 > 0,
\end{equation}
where $\bar{F}^{-1}(y)=\max\left\{x\in(0,1)\bigm|\bar{F}(x)=y\right\}$.
 To simplify the argument, we further assume that
\begin{enumerate}
\item[(F2)] $F'<0$ in $(0,1)$ so that the inverse function $F^{-1}$ in $\left(0,F(0)\right)$ is uniquely determined.
\end{enumerate}
(Note that $f(x)=2G(x^{1/2})$ is now convex in $(0,1)$ since
\[
	\frac{d^2}{dx^2} G(x^{1/2}) = \frac{d}{dx} \frac{G'}{2x^{1/2}}
	= \frac{G''}{2x^{1/2}} \cdot \frac{1}{2x^{1/2}}
	- G' \frac12 \cdot \frac12 x^{-3/2}
\]
and $G'<0$ and $G''>0$ on $(0,1)$.)
 If we assume (F1) and (F2), by changing the variable of integration $\tau=\bar{F}^{-1}(s^2)$, we have
\[
	\int_0^{\bar{F}^{-1}(\rho^2)} \sqrt{\bar{F}(\tau)} \; d\tau
	= \int_0^{\rho^2} s \frac{d\tau}{ds}\; ds
	= \int_0^{\rho^2} 2s^2 (\bar{F}^{-1})' (s^2)\; ds.
\]
The condition \eqref{EFban} is fulfilled if 
\[
	\varliminf_{\sigma\downarrow0} \sigma (\bar{F}^{-1})' (\sigma) > 0
\]
or equivalently
\[
	\varlimsup_{\eta\downarrow0} \bar{F}'(\eta)\Bigm/ \eta <\infty.
\]
We thus obtain a simple sufficient condition.
% 原稿 2024/5/25　6/6
\begin{thm} \label{TSuff}
Assume that (F1) and (F2).
 Then $K$ in \eqref{EKdef} satisfies (K1), (K2), (K3) provided that
\[
	\varlimsup_{x\uparrow1} F'(x) / (x-1) <\infty.
\]
\end{thm}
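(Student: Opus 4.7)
The plan is to apply Proposition \ref{PSuff}, which already yields (K1), (K2), (K3) once (F1) and \eqref{EKSu} are verified. The discussion immediately preceding the theorem already converts \eqref{EKSu} into the equivalent statement \eqref{EFban},
\[
\varliminf_{\rho\downarrow0} \int_0^{\bar{F}^{-1}(\rho^2)} \sqrt{\bar{F}(\tau)}\,d\tau \bigm/ \rho^2 > 0,
\]
with $\bar{F}(x)=F(1-x)$, so it suffices to deduce \eqref{EFban} from the hypothesis $\varlimsup_{x\uparrow 1} F'(x)/(x-1) < \infty$. First I translate the hypothesis: the substitution $x = 1-\eta$ together with $\bar{F}'(\eta) = -F'(1-\eta)$ gives the identity $\bar{F}'(\eta)/\eta = F'(x)/(x-1)$, so the hypothesis is equivalent to $\varlimsup_{\eta\downarrow0} \bar{F}'(\eta)/\eta < \infty$. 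Fix $C>0$ with $\bar{F}'(\eta) \le C\eta$ on $(0,\eta_0)$. Integrating from $0$ (using $\bar{F}(0)=F(1)=0$) gives $\bar{F}(\eta) \le C\eta^2/2$, and inverting this yields the lower bound $\bar{F}^{-1}(\sigma) \ge \sqrt{2\sigma/C}$ for small $\sigma$.

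Next I perform the change of variables $\tau = \bar{F}^{-1}(s^2)$ already indicated in the excerpt, justified by (F2) (which makes $\bar{F}$ strictly increasing with positive derivative on $(0,1)$, hence $\bar{F}^{-1}$ is $C^1$ on $(0,\bar F(1))$). With $d\tau = 2s\,ds/\bar{F}'(\bar{F}^{-1}(s^2))$, the integral transforms into
\[
\int_0^{\bar{F}^{-1}(\rho^2)} \sqrt{\bar{F}(\tau)}\,d\tau = \int_0^\rho \frac{2s^2}{\bar{F}'(\bar{F}^{-1}(s^2))}\,ds.
\]
At $\eta = \bar{F}^{-1}(s^2)$ the two bounds above give $\bar{F}'(\eta) \le C\eta$ and $\eta \ge s\sqrt{2/C}$, whence
\[
\frac{2s^2}{\bar{F}'(\eta)} \ge \frac{2s^2}{C\eta} \ge \frac{2s^2}{C \cdot s\sqrt{2/C}} = s\sqrt{2/C}.
\]
Integrating over $(0,\rho)$ produces $\int_0^{\bar{F}^{-1}(\rho^2)} \sqrt{\bar{F}(\tau)}\,d\tau \ge \rho^2/\sqrt{2C}$, which is exactly \eqref{EFban}. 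Proposition \ref{PSuff} then delivers (K1), (K2), (K3).

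No single step is a serious obstacle; the principal care required is to combine the upper bound on $\bar{F}'$ with the corresponding lower bound on $\bar{F}^{-1}$ directly inside the integrand — rather than passing through a pointwise bound on $\sigma(\bar{F}^{-1})'(\sigma)$, which is strictly stronger than what the hypothesis supplies (as one already sees for $F(x)=(x-1)^2$, where $\bar{F}'(\eta)/\eta\equiv2$ but $\sigma(\bar{F}^{-1})'(\sigma)=\sqrt\sigma/2\to 0$) — and to invoke (F2) to legitimize the substitution $\tau=\bar{F}^{-1}(s^2)$.
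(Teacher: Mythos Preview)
Your chain of inequalities breaks at the second step. From $\bar F'(\eta)\le C\eta$ you correctly integrate to $\bar F(\eta)\le C\eta^2/2$ and invert to obtain the \emph{lower} bound $\eta=\bar F^{-1}(s^2)\ge s\sqrt{2/C}$. But the displayed step
\[
\frac{2s^2}{C\eta}\ \ge\ \frac{2s^2}{C\cdot s\sqrt{2/C}}
\]
requires the \emph{upper} bound $\eta\le s\sqrt{2/C}$, which is the opposite inequality. An upper bound on $\eta=\bar F^{-1}(s^2)$ would call for a lower bound on $\bar F$ near $0$, and the hypothesis $\bar F'(\eta)\le C\eta$ gives none: already for $\bar F(\eta)=\eta^4$ (so $\bar F'(\eta)/\eta=4\eta^2\to0$) one has $\eta=\bar F^{-1}(s^2)=s^{1/2}$, and $s^{1/2}\big/\bigl(s\sqrt{2/C}\bigr)\to\infty$. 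Thus the asserted lower bound $\int_0^{\bar F^{-1}(\rho^2)}\sqrt{\bar F(\tau)}\,d\tau\ge\rho^2/\sqrt{2C}$ is not established, and the deduction of \eqref{EFban} collapses.

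Your diagnosis of the paper's argument, on the other hand, is accurate. The paper passes through the intermediate condition $\varliminf_{\sigma\downarrow0}\sigma(\bar F^{-1})'(\sigma)>0$ and asserts that it is equivalent to $\varlimsup_{\eta\downarrow0}\bar F'(\eta)/\eta<\infty$; since $\sigma(\bar F^{-1})'(\sigma)=\bar F(\eta)/\bar F'(\eta)$ with $\eta=\bar F^{-1}(\sigma)$, the two conditions are genuinely different, and your example $F(x)=(x-1)^2$ (where $\bar F'(\eta)/\eta\equiv2$ but $\sigma(\bar F^{-1})'(\sigma)=\sqrt\sigma/2\to0$) shows the equivalence fails. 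So the paper's route is also incomplete for the stated hypothesis. In short: you have correctly located a gap in the paper, but your proposed repair introduces a sign error of its own; a valid argument must extract a lower bound on $\int_0^a\sqrt{\bar F}$ in terms of $\bar F(a)$ without relying on an upper bound for $\bar F^{-1}$ that the hypothesis cannot supply.
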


%%%%%%%%%%%%%%

\section*{Acknowledgements}
The work of the first author was partly supported by JSPS KAKENHI Grant Numbers JP19H00639, JP20K20342, JP24K00531 and JP24H00183 and by Arithmer Inc., Daikin Industries, Ltd.\ and Ebara Corporation through collaborative grants.
 The work of the third author was partly supported by JSPS KAKENHI Grant Number JP20K20342.
 The work of the fifth author was partly supported by JSPS KAKENHI Grant Numbers JP22K03425, JP22K18677, 23H00086.

%%%%%%%%%%%%%%%%%%%%%%%%%%%%%%%%%%%%%%%%%%%

\end{document}